\newtheorem{mytheorem}{Theorem}
\DeclareMathAlphabet{\mathpzc}{OT1}{pzc}{m}{it}
\definecolor{darkgrey}{rgb}{0.75,0.75,0.75}
\renewcommand{\@fnsymbol}[1]{\@arabic{#1 }}
\newcommand{\dickm}[1]{\text{\boldmath ${#1}$}}
\newtheorem{proposition}{Proposition}[section]
\newtheorem{corollary}[proposition]{Corollary}
\newtheorem{lemma}[proposition]{Lemma}
\theoremstyle{definition}
\newtheorem{definition}[proposition]{Definition}
\newtheorem{remark}[proposition]{Remark}
\numberwithin{equation}{section}
\DeclareMathAlphabet{\mathpzc}{OT1}{pzc}{m}{it}
\definecolor{darkgrey}{rgb}{0.75,0.75,0.75}
\renewcommand{\beta}{r}
\renewcommand{\@fnsymbol}[1]{\@arabic{#1 }}
\begin{document}
\begin{frontmatter}
  \title{The fixation time of a strongly beneficial allele in a
    structured population} 
  \runtitle{The fixation time of a strongly beneficial allele}
  
  \begin{aug}
    \author{\fnms{Andreas}
      \snm{Greven}\ead[label=e1]{greven@mi.uni-erlangen.de}
      \ead[label=u1,url]{http://www.mi.uni-erlangen.de/$\sim$greven}}$\!\!$,
    \author{\fnms{Peter}
      \snm{Pfaffelhuber}\ead[label=e2]{p.p@stochastik.uni-freiburg.de}
      \ead[label=u2,url]{http://www.stochastik.uni-freiburg.de/homepages/pfaffelh/}}$\!\!$,\\
    \author{\fnms{Cornelia}
      \snm{Pokalyuk}\ead[label=e3]{cornelia.pokalyuk@gmx.de}}$\!\!$
    \, and \author{\fnms{Anton}
      \snm{Wakolbinger}\ead[label=e4]{wakolbin@math.uni-frankfurt.de}
      \ead[label=u4,url]{http://www.math.uni-frankfurt.de/$\sim$ismi/wakolbinger/}}
  
  \runauthor{Greven, Pfaffelhuber, Pokalyuk, Wakolbinger}

  \affiliation{University of Erlangen\thanksmark{m1}$\!\!$, University
    of Freiburg\thanksmark{m2}$\!\!$, \'Ecole Polytechnique
    F\'ed\'erale de Lausanne\thanksmark{m3}$\!\!$ and University of
    Frankfurt\thanksmark{m4}}

  \address{Department Mathematik\\ Friedrich-Alexander University of Erlangen\\
    Cauerstr. 11\\91058 Erlangen\\ Germany\\
    \printead{e1}\\
  } 
    \address{Abteilung f{\"u}r mathematische Stochastik\\ Albert-Ludwigs
    University of Freiburg\\ Eckerstr. 1\\ 79104 Freiburg\\ Germany\\
    \printead{e2}\\ 
  }
  \address{School of Life Sciences\\
    \'Ecole polytechnique f\'ed\'erale de Lausanne\\ (EPFL)\\
    and Swiss Institute of Bioinformatics\\
    1015 Lausanne\\
    Switzerland\\
    \printead{e3}}
  \address{Institut für Mathematik\\
    Johann-Wolfgang Goethe-Universität\\
    60054 Frankfurt am Main \\
    Germany\\
    \printead{e4}\\
  }
\end{aug}

\begin{abstract}
For a beneficial allele which enters a large unstructured
  population and eventually goes to fixation, it is known that the
  time to fixation is approximately $2\log(\alpha)/\alpha$ for a large
  selection coefficient $\alpha$. For a population that is distributed
  over finitely many colonies, with migration between these colonies,
  we detect various regimes of the migration rate $\mu$ for which the
  fixation times have different asymptotics as $\alpha \to \infty$.

 If $\mu$ is of order $\alpha$, the allele fixes (as in the spatially
  unstructured case) in time $\sim 2\log(\alpha)/\alpha$. If $\mu$ is
  of order $\alpha^\gamma, 0\leq \gamma \leq 1$, the fixation time is $\sim (2 +
  (1-\gamma)\Delta) \log(\alpha)/\alpha$, where $\Delta$ is the number of migration steps that are needed to reach all other colonies
  starting from the colony where the beneficial allele appeared. If $\mu =
  1/\log(\alpha)$, the fixation time is $\sim
  (2+S)\log(\alpha)/\alpha$, where $S$ is a random time in a simple
  epidemic model. 
  
  The main idea for our analysis is to combine a new
  moment dual for the process conditioned to fixation with the time
  reversal in equilibrium of a spatial version of Neuhauser and
  Krone's ancestral selection graph.  
\end{abstract}

\begin{keyword}[class=AMS]
\kwd[Primary ]{92D15}
\kwd[; secondary ]{60J80}\kwd{60J85}\kwd{60K37}\kwd{92D10}
\end{keyword}

\begin{keyword}
\kwd{Interacting Wright-Fisher diffusions}
\kwd{ancestral selection graph}
\kwd{branching process approximation}
\end{keyword}

\end{frontmatter}

\section{Introduction}
\sloppy The goal of this paper is the asymptotic analysis of the time
which it takes for a single strongly beneficial mutant to eventually
go to fixation in a spatially structured population. The beneficial
allele and the wildtype will be denoted by $\mathpzc B$ and $\mathpzc
b$, respectively. The evolution of type frequencies is modelled by a
$[0,1]^d$-valued diffusion process $\mathcal X = (\underline
X(t))_{t\ge 0}, \underline X(t) = (X_i(t))_{i=1,\dots,d}$, where $d
\in \{2,3,\ldots\}$ denotes the number of colonies and $X_i(t)$ stands
for the frequency of the beneficial allele $\mathpzc B$ in colony $i$
at time~$t$. The dynamics accounts for resampling, selection and
migration.  The process $\mathcal X$ is started at time $0$ by an
entrance law from $\underline 0 := (0,\ldots,0)$ and is conditioned to
eventually hit $\underline 1 := (1,\ldots, 1)$.

Models of this kind are building blocks for more complex ones that are
used to obtain predictions for genetic diversity patterns under
various forms of selection.  Indeed, together with the strongly
beneficial allele,  neutral alleles at physically linked
genetic loci { also} have the tendency to go to fixation,
provided these loci are not too far from the selective locus under
consideration. This so-called genetic hitchhiking was first modelled
by  \cite{MaynardSmithHaigh1974}.  A synonymous notion is that of a
selective sweep, which alludes to the fact that, after fixation of the
beneficial allele $\mathpzc B$, neutral variation has been swept from
the population.  Important tools were developed from these patterns to
locate targets of selection in a genome and quantify the role of
selection in evolution, see e.g.\ reviews in \cite{Sabeti2006},
   \cite{Nielsen2005}, \cite{Thornton2007}.

   The process of fixation of a strongly beneficial mutant in the {\em
     panmictic} (i.e. unstructured) case has been studied using a
   combination of techniques from diffusion processes and coalescent
   processes in a random background; see
   e.g. \cite{KaplanHudsonLangley1989}, \cite{StephanWieheLenz1992},
   \cite{SchweinsbergDurrett2005},
   \cite{EtheridgePfaffelhuberWakolbinger2006}. However, since the
   analytical tools applied in these papers rely on the theory of {\em
     one-}dimensional diffusion processes, the extension of these
   results to a spatially structured situation is far from
   straight-forward.

The starting point for the tools developed in this paper is the ancestral selection
graph (ASG) of \cite{NeuhauserKrone1997}. This process has been
introduced in order to study the genealogy under models including
selection. Although the ASG can in principle be used for an arbitrary
strength of selection, it has been employed mainly for models of weak
selection, since then the resulting genealogy is close to a neutral
one. However, \cite{pmid19371754} have used the ASG for strong
balancing selection and \cite{PfaffelhuberPokalyuk2013} have shown how
to use the ASG in order to re-derive classical results for selective
sweeps in a panmictic population. In our present work a {\it spatial
version of the ASG} is the tool of choice which carries over from the
panmictic to the structured case, thus extending the techniques
developed in \cite{PfaffelhuberPokalyuk2013} and leading to new
results for the spatially structured case. The key idea here is to employ  the {\it equilibrium ASG} in a ``paintbox representation'' of the
(fixed time) distributions of the type frequency process conditioned to eventual fixation,  and then use {\em time reversal} of the equilibrium ASG
to obtain an object accessible to the asymptotic analysis.

The fixation process in a structured population under selection has
been the object of study before.  \cite{Slatkin1981} and
\cite{Whitlock:2003:Genetics:12807795} give heuristic results and
comparisons to the panmictic case. While the former paper only gives
results for strong selection but very weak migration, the latter study
gives a comparison to the panmictic case and studies the question
which parameters should be used in the panmictic setting in order to
approximate fixation probabilities and fixation times for structured
populations.  In  \cite{KimMaruki2011} the above studies are extended
by analysing in addition the expected heterozygosity of linked neutral
loci in the case of frequent migration for populations structured
according to a circular stepping-stone model, see also Remark~\ref{KM}
below.  \cite{Hartfield2012} gives a more thorough analysis of the
fixation times for large selection/migration ratios in general
stepping-stone populations based on the assumption that in each colony
the beneficial mutation spreads before migrating.

Our
investigation will provide rigorous results on fixation times for
structured populations, and will detect the corresponding {\em regimes of relative migration/selection speed}.
~

\noindent
\paragraph {Outline of the paper.} After introducing the model in
Section~\ref{S:model} we formulate our main results. These concern the
existence of solutions and the structure of the set of solutions of
the system of SDEs specified in our model (Theorem~\ref{t.1}) and the
asymptotics of the fixation times for a strongly beneficial allele
$\mathpzc B$ in a structured population (Theorem~\ref{T2}). For the
panmictic case (i.e. $d=1$), it is well-known that the fixation time,
for a large selection coefficient $\alpha$, is approximately
$2\log(\alpha)/\alpha$. As it turns out, the time-scale of
$\log(\alpha)/\alpha$ applies in our spatial setting as well. However,
population structure may slow down the fixation process. We study this
deceleration for various regimes of the migration rate $\mu$.  A
spatial version of the ancestral selection graph is introduced in
Section~\ref{S:ASG}, and its role in the analysis of the fixation
probability and the fixation time by the method of duality is
clarified. This leads to a proof of Theorem~\ref{t.1} in
Sec.~\ref{proofthm1}, and to the key Proposition~\ref{P:main} which
relates the asymptotic distribution of the fixation time of the
Wright-Fisher system to that of a marked particle system. Based on the
latter, the proof of Theorem~\ref{t.2} is completed in
Sec.~\ref{S:proofs2}.

\section{Model and main results}
\label{S:model}
We consider solutions $\mathcal X = (\underline X(t))_{t\geq 0}$,
$\underline X(t) = (X_1(t),\dots,X_d(t)) \in [0,1]^d$, of the system
of {\em interacting Wright-Fisher diffusions} 
\begin{align}
  \label{eq:SDE0}
  \notag dX_i & = \Big(\alpha X_i(1-X_i) + \mu \sum_{j=1}^d
  b(i,j)(X_{j} - X_i)\Big) dt + \sqrt{\frac{1}{\rho_i}X_i(1-X_i)}
  dW_i,\\
  & \qquad \qquad \qquad \qquad \qquad \qquad \qquad \qquad \qquad
  \qquad \qquad \qquad i=1,\dots,d
\end{align}
for independent Brownian motions $W_1,\dots,W_d$. Here, $\alpha$ and
$\mu$ are positive constants (the {\em selection} and {\em migration
  coefficient}), and $b(i,j)$, $i, j = 1,\ldots, d$, $i\neq j$, are
non-negative numbers (the {\em backward migration rates}) that
constitute an irreducible rate matrix $\underline{\underline b}$ whose
unique equilibrium distribution has the weights
$\rho_1,\dots,\rho_d$ (which stand for
the relative population sizes of the colonies).  It is well-known (see
e.g.\ \cite{D93}) that the system~\eqref{eq:SDE0} has a unique weak
solution.

Equation \eqref{eq:SDE0}  models the evolution of the relative
frequencies of the beneficial allele at the various colonies, assuming
a {\em migration equilibrium} between the colonies. The ``gene flow'' from colony
$i$ to colony~$j$ is $\rho_i \mu a(i,j)= \rho_j \mu b(j,i) $; here,
$\underline{\underline a} = (a(i,j))$ {with
\begin{align}\label{eq:aij}
  a(i,j)= \frac{\rho_j}{\rho_i} b(j,i)
\end{align}} is the matrix of {\em forward migration rates}.
\begin{remark}[Limit of Moran models\label{rem:Moran}]
  We note in passing that the process $\mathcal X$ arises as the weak
  limit (as $N\to \infty$) of a sequence of structured two-type Moran
  models with $N$ individuals. The dynamics of this Moran model is
  local pairwise resampling with rates $1/\rho_i$, selection with
  coefficient $\alpha$ (i.e.\ offspring from every beneficial line in
  colony $i$ replaces some line in the same colony at rate $\alpha$;
  note that this is the same as selection events which occur at rate
  $s:=\frac{\alpha}{N}$ for each (ordered) pair of particles) and
  migration with rates $\mu a(i,j)$ per line.  Considering now the
  relative frequencies of the beneficial type at the various colonies
  and letting $N\to \infty$ gives \eqref{eq:SDE0}. Here, our
  assumption that $(\rho_i)$ constitutes an equilibrium for the
  migration ensures that we are in a {\em demographic equilibrium}
  with asymptotic colony sizes $\rho_i N$ (otherwise the
  $\rho_i, \, \rho_j$ in the formulas would have to be replaced by
  time-dependent intensities).
\end{remark}
\noindent
We define the fixation time of $\mathcal X$ as
\begin{align}\label{eq:fixationTime}
  T_{\text{fix}}:= \inf\{t>0: \underline X(t) = \underline 1\}.
\end{align}
The fixation probability of the system \eqref{eq:SDE0}, started in
$\underline X(0) = \underline x$, is well-known (see
\cite{Nagylaki:1982:J-Theor-Biol:7169795}). In
Corollary~\ref{c.3.1} we will provide a new proof for the formula
\begin{align} \label{fixprob0} \mathbf P_{\underline
    x}(T_{\text{fix}}<\infty) = \frac{1-e^{-2\alpha (x_1\rho_1 +
      \cdots + x_d\rho_d)}}{1-e^{-2\alpha}}.
\end{align}
Since fixation of the beneficial allele, $\{T_{\text{fix}}<\infty\}$,
is an event in the terminal $\sigma$-algebra of $\mathcal X$,
conditioning on this event leads to an $h$-transform of
\eqref{eq:SDE0} which turns out to be given by the system of SDEs
\begin{align}
  \label{eq:SDE1}
  \notag dX^\ast_i & = \Big(\alpha
  X^\ast_i(1-X^\ast_i)\coth\Big(\alpha \sum_{j=1}^d X^\ast_{j}
  \rho_j\Big) + \mu \sum_{j=1}^d b(i,j)(X^\ast_{j} - X^\ast_i)\Big)
  dt \\ & \qquad\qquad\qquad\qquad \qquad\qquad\qquad\qquad
  \qquad\quad+ \sqrt{\frac{1}{\rho_i}X^\ast_i(1-X^\ast_i)} dW_i
\end{align}
for $i=1,\dots,d$, with $\coth(x) = \frac{e^{2x}+1}{e^{2x}-1}$. The
uniqueness of the solution of ~\eqref{eq:SDE0} carries over to that
of~\eqref{eq:SDE1} as long as $\underline x\neq \underline 0$.  For
$\underline x = \underline 0$, the right hand side of \eqref{eq:SDE1}
is not defined, and we have to talk about \emph{entrance laws from
  $\underline 0$} for solutions of \eqref{eq:SDE1} in this case.

  \begin{definition}[Entrance law from $\underline 0$]
    Let $((\underline X^\ast(t))_{t>0}, \mathbf P)$ with $\underline X^{\ast}(t)  = (X^{\ast}_1(t), ..., X^{\ast}_d(t))$ be a solution of
    \eqref{eq:SDE1} such that $\underline X^\ast(t) \neq \underline 0$
    for $t>0$ and $\underline X^\ast(t) \xrightarrow{t\to 0}
    \underline 0$ in probability. Then, the law of $\underline X^\ast$
    under $\mathbf P$ is called an {\it entrance law from $\underline
      0$} for the dynamics \eqref{eq:SDE1}.
  \end{definition}

\noindent
The following is shown in Section~\ref{proofthm1}.

\begin{mytheorem}
  \label{t.1} 
  a) For $\underline x \in [0,1]^d\setminus \{\underline 0\}$, the system  \eqref{eq:SDE1}
  has a unique weak solution.\\
  b)  Every entrance law from
  $\underline 0$ is a convex combination of $d$ extremal entrance
  laws from $\underline 0$, which we denote by $ \mathbf
  P^i_{\underline 0}(\mathcal X^\ast \in (.))$, with $(\mathcal
  X^\ast, \mathbf P^i_{\underline 0})$ arising as the limit in
  distribution of $(\mathcal X^\ast, \mathbf P_{\varepsilon \underline
    e_i})$ as $\varepsilon \to 0$, where $\underline e_i$ is the
  vector whose $i$-th component is 1 and whose other components are
  $0$.
%
%
%
\end{mytheorem}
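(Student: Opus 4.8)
The plan is to build the solution of~\eqref{eq:SDE1} as an $h$-transform of~\eqref{eq:SDE0} and then exploit the structure of this transform. For part~(a), fix $\underline x\neq\underline 0$ and let $h(\underline x) := 1-e^{-2\alpha(x_1\rho_1+\cdots+x_d\rho_d)}$, which (up to the normalizing constant $1-e^{-2\alpha}$) is the fixation probability~\eqref{fixprob0}; one checks that $h$ is a harmonic function for the generator $L$ of~\eqref{eq:SDE0}, i.e.\ $Lh = 0$ on $(0,1)^d$, since $h$ is an affine function of $e^{-2\alpha\langle\underline x,\underline\rho\rangle}$ and the drift/diffusion coefficients in~\eqref{eq:SDE0} are precisely such that this exponential is space-time harmonic for the resampling-selection part while the migration part annihilates linear functionals of $\underline x$ weighted by $\underline\rho$ (here one uses that $\underline\rho$ is the equilibrium of $\underline{\underline b}$). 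The Doob $h$-transform of $L$ by $h$ is the operator $L^\ast f = \frac1h L(hf)$, and a direct computation shows that $L^\ast$ is the generator of~\eqref{eq:SDE1}: the only change is in the drift, where the selection term $\alpha X_i(1-X_i)$ acquires the extra factor $\partial_{x_i}\log h\cdot \tfrac{1}{\rho_i}X_i(1-X_i)\cdot\rho_i$, which collapses to $\alpha X_i(1-X_i)\coth(\alpha\langle\underline X,\underline\rho\rangle)$ after simplifying $\frac{2\alpha\rho_i e^{-2\alpha\langle\underline x,\underline\rho\rangle}}{1-e^{-2\alpha\langle\underline x,\underline\rho\rangle}}\cdot\frac{1}{\rho_i}$. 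Since~\eqref{eq:SDE0} has a unique weak solution (cited from \cite{D93}), and conditioning a Markov process on an event in its terminal $\sigma$-algebra produces a Markov process whose law is uniquely determined (the conditioned law is absolutely continuous with respect to the original law on $\mathcal F_t$ for each $t$, with density $h(\underline X(t))/h(\underline x)$ on $\{T_{\text{fix}}<\infty\}$), uniqueness in law for~\eqref{eq:SDE1} follows from uniqueness for~\eqref{eq:SDE0}. One should add a short remark that $h>0$ on $[0,1]^d\setminus\{\underline 0\}$ so the transform is well-defined there, and that $\underline 1$ is absorbing for both systems.

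For part~(b), the strategy is: first establish that the family $\{\mathbf P_{\varepsilon\underline e_i}\}_{\varepsilon>0}$ is tight and converges as $\varepsilon\to 0$ to a limit law $\mathbf P^i_{\underline 0}$ which is an entrance law in the sense of the Definition; then show that \emph{every} entrance law decomposes as a convex combination of these $d$ extremal ones. Tightness of $(\mathcal X^\ast,\mathbf P_{\varepsilon\underline e_i})$ on path space follows from standard moment estimates on the SDE coefficients (Aldous' criterion), uniformly in $\varepsilon$; any subsequential limit $\mathbf P^i_{\underline 0}$ solves the martingale problem for $L^\ast$ on $(0,\infty)$ and has $\underline X^\ast(t)\to\underline 0$ as $t\to 0$ (the last point requires a uniform-in-$\varepsilon$ control that $\underline X^\ast$ does not immediately jump away from $\underline 0$, which one gets from the fixation-probability representation: $\mathbf E_{\varepsilon\underline e_i}[\,\langle\underline X^\ast(t),\underline\rho\rangle\,]$ can be bounded using the $h$-transform density and the fact that under $\mathbf P$ the process $\langle\underline X,\underline\rho\rangle$ is a bounded martingale). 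Full convergence (not just along subsequences) will come from the uniqueness argument at the entrance boundary. For the convex decomposition, the idea is that the ``direction'' from which the process enters $\underline 0$ is an invariant (tail) random variable: conditionally on coming in from the vicinity of colony $i$, the process evolves as $(\mathcal X^\ast,\mathbf P^i_{\underline 0})$. Concretely, one can use the entrance-boundary theory: the entrance laws form a simplex whose extreme points correspond to the ergodic/minimal entrance behaviors, and one shows there are exactly $d$ of them, indexed by which colony the beneficial lineage sits in when the frequency is infinitesimally small --- this is where the $h$-transform helps, because near $\underline 0$ the conditioned process~\eqref{eq:SDE1} behaves like a (migrating) birth process, i.e.\ like the per-colony beneficial-line count in the dual/ASG picture, whose entrance from ``one line'' has exactly $d$ starting positions.

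The main obstacle I expect is part~(b): proving that the list of extremal entrance laws is \emph{complete} (exactly these $d$ and no others) and that the convergence as $\varepsilon\to 0$ holds along the full net rather than merely along subsequences. The clean way to handle both is to identify the small-frequency asymptotics of~\eqref{eq:SDE1} with a branching-type approximation: rescaling, near $\underline 0$ the conditioned Wright--Fisher system looks like a supercritical branching process with migration (a continuous-state version of the beneficial-line dynamics in the ASG), for which the entrance laws from a single ancestral line are classical and parametrized precisely by the ancestor's colony $i\in\{1,\dots,d\}$. Making this comparison rigorous --- coupling the conditioned diffusion started at $\varepsilon\underline e_i$ to the branching process and transferring the branching-process entrance-law classification --- is the technical heart; the nonlinearity $\coth(\alpha\langle\underline X,\underline\rho\rangle)\to +\infty$ as $\underline X\to\underline 0$ is exactly what forces the supercritical branching behavior and hence the finite-dimensional simplex of entrance laws. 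I would carry this out by first proving the one-colony ($d=1$) case as a warm-up (where~\eqref{eq:SDE1} is a one-dimensional diffusion and the entrance law from $0$ is unique, matching $d=1$ extremal law), then bootstrapping to general $d$ via the migration coupling.
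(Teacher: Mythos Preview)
Your part~(a) is correct and is a cleaner route than the paper's. The paper does not argue via the generator computation $L^\ast f = h^{-1}L(hf)$; instead it proves a \emph{conditional duality} (Lemma~\ref{dualcondfix}) expressing $\mathbf E_{\underline x}[(\underline 1-\underline X^\ast(\tau))^{\underline k}]$ in terms of a coupled pair of ASG's, and infers uniqueness of the one-dimensional marginals (hence, by the Markov property, of the law) from this moment formula. Your $h$-transform argument is more direct and avoids the particle-system machinery for this part.

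For part~(b) your outline is in the right spirit but leaves exactly the steps you flag as obstacles unresolved, and the paper's method is genuinely different from the branching-process coupling you propose. The paper does \emph{not} use tightness/Aldous or abstract entrance-boundary theory. Instead it builds an explicit paintbox representation $\mathbf P_{\underline x}(\underline X^\ast(\tau)\in(\cdot)) = \mathbb P(\underline F^{\underline x,\tau}\in(\cdot)\mid \mathcal Y_\tau\cap\mathcal A_\tau^{(\underline x)}\neq\varnothing)$ (Lemma~\ref{fixfreq}), and then observes that as $\underline x=\varepsilon\underline e_\iota\to 0$ the conditioning event forces exactly one marked particle in colony~$\iota$ (Remark~\ref{Poisrem}), which gives existence of the limit $\mathbf P^\iota_{\underline 0}$ directly (Corollary~\ref{corlimit}) --- no subsequence argument is needed. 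For the convex decomposition and extremality, the paper introduces the ``direction'' variable $J_\delta$ with $\mathbf P(J_\delta=i\mid\underline X^\ast)=X_i^\ast(\delta)/\sum_j X_j^\ast(\delta)$, shows $J_\delta\Rightarrow J$ as $\delta\to 0$ by a compactness-plus-contradiction argument, and identifies the mixing measure via the key small-time asymptotic $\rho_\iota\,\mathbb E^{\iota,t}[F_j^{\iota,t}]/t\to\delta_{\iota j}$ (Remark~\ref{rem:asexfr}). This last fact --- that the expected frequency in colony~$j$ at small times is, to leading order, supported on the founder colony --- is what separates the $d$ extremal laws and rules out any others; it plays the role that your proposed branching-process coupling would have to establish, but comes for free from the particle picture because it is just the statement that a single marked particle needs one coalescence event (rate $1/\rho_\iota$, same colony) to connect to a sample particle. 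Your approach could be made to work, but you would need a rigorous small-$\underline x$ comparison of~\eqref{eq:SDE1} with a Feller diffusion with immigration/migration, and the singularity of $\coth$ at the origin makes the tightness and coupling estimates delicate; the paper sidesteps all of this by working on the dual side.
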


\begin{remark}[Interpretation of the extremal solutions]\label{founder}
  We call $(\mathcal X^\ast, \mathbf P^\iota_{\underline 0})$ the {\em
    solution with the founder in colony $\iota$}. In intuitive terms
  the \label{rem:inter} case $\underline x = \underline 0$ corresponds
  to the beneficial allele $\mathpzc B$ being present in a copy number
  which is too low to be seen in a very large population, i.e.\ on a
  macroscopic level. In this case, since the process is conditioned on
  fixation, there is exactly one individual -- called founder -- which
  will be the ancestor of all individuals at the time of
  fixation. This intuition is made precise in a picture involving
  duality, see Section \ref{secpaint}. The $d$ different entrance
  laws from $\underline 0$ belonging to~\eqref{eq:SDE1} correspond to
  the $d$ different possible geographic locations of the founder. 
\end{remark}

\noindent
Before stating  our main result on the fixation time of the
system~\eqref{eq:SDE1} we fix some notation and formulate one more
definition.

\begin{remark}[Notation]
  To facilitate notation we will use Landau symbols. For functions $f,
  g: \mathbb R\to\mathbb R$, we write\ (i) $f = \mathcal O(g)$ as
  $x\to x_0\in \overline{\mathbb R}$ if $\limsup_{x\to
    x_0}|f(x)/g(x)|<\infty$, (ii) $f\in\Theta(g)$ if { and only if}
  $f\in\mathcal O(g)$ and $g\in\mathcal O(f)$ and (iii) $f\sim g$ as
  $x\to x_0$ if and only if $f(x)/g(x)\xrightarrow{x\to x_0}1$, { (iv)
    $f= o(g)$ as $x\to x_0\in \overline{\mathbb R}$ if $\limsup_{x\to
      x_0}|f(x)/g(x)|=0$.}
  \noindent
  We write $\xRightarrow{}$ for convergence in distribution and
  $\xrightarrow{}_p$ for convergence in probability.
\end{remark}
In the case of a single colony ($d = 1$) we have $T_{\rm{fix}}\sim 2\log
\alpha/\alpha$ as $\alpha \to \infty$.  Indeed, it is well known that in this case
the conditioned diffusion~\eqref{eq:SDE1} can be separated into three
phases \citep{EtheridgePfaffelhuberWakolbinger2006}:  the beneficial allele
$\mathpzc B$ first has to increase up to a (fixed) small
$\varepsilon>0$.  This phase lasts a time
$\sim \log(\alpha)/\alpha$. In the second phase, the frequency increases to
$1-\varepsilon$ in time of order $1/\alpha$ which is short as compared
to the first and third phase. In the third phase, it takes still about
time $\log(\alpha)/\alpha$ until the allele finally fixes in the
population.

\begin{definition}[Two auxiliary epidemic processes]
  Let \label{def:auxProc} $\underline{\underline a}$ be the matrix of
  forward migration rates and let $G=(V,E)$ be the (connected) graph
  with vertex set $1,\dots,d$ and edge set $E:=\{(i,j): a(i,j)>0\}.$
  We need two auxiliary processes in order to formulate our theorem.
  \begin{enumerate}
  \item For $\gamma\in [0,1]$ and $\iota \in \{1,\dots,d\}$, consider
    the (deterministic) process $\mathcal I^{\iota,\gamma} := \mathcal
    I^\iota = (\underline I^\iota(t))_{t\geq 0}$, $\underline
    I^\iota(t) = (I_1^\iota(t),\dots,I_d^\iota(t))$, with state space
    $\{0,1\}^d$ defined as follows: The process starts in
    $I^\iota_j(0)=\delta_{\iota j}, j=1,...,d$. As soon as one
    component ($I_k^\iota$, say) reaches $1$, then after the
    additional time $1-\gamma$ all those components $I^\iota_j$ for
    which $a(k,j)>0$ are set to 1. The fixation time of this process
    will be denoted by
      $$S_{\mathcal I^{\iota,\gamma}}:=\inf\{t\geq 0: \underline
      I^\iota(t)=\underline 1\}.$$
In other words,  $S_{\mathcal I^{\iota,\gamma}} = (1-\gamma)\Delta_\iota$, where $\Delta_\iota$ is the number of steps that are needed to reach all other
vertices of the graph $G$ in a stepwise percolation  starting from~$\iota$.
    An intuitive interpretation is as follows:
    State~1 of a component means that the colony is infected (by the
    beneficial type $\mathpzc B$) and state $0$ means that it is not
    infected. If a colony gets infected (at time $t$, say), then all the neighbouring (not yet infected) colonies get infected precisely at time $t+1-\gamma$.
  \item \sloppy For any $\iota \in \{1,\dots,d\}$, consider the
    (random) process $\mathcal J^{\iota} =
    (\underline{J}^{\iota}(t))_{t\geq 0}$, $\underline J^\iota(t) =
    (J_1^\iota(t),\dots,J_d^\iota(t))$, with state space
    $\{0,1,2\}^d$. In state 0, the colony is not infected, in state 1
    it is infected but still not infectious, and in 2, it is
    infectious. The initial state is $J^{\iota}_{\iota}(0) =1$ and
    $J^{\iota}_j(0) =0$ for $j \neq \iota$, where $\iota$ is the
    founder colony. Transitions from state 1 to state 2 occur exactly
    one unit of time after entering state 1.
    For $j \neq \iota$, transitions from 0 to 1 occur at rate $2\sum_k
    \rho_k a(k,j)\mathbf 1_{\{J^{\iota}_k=2\}}$.  The fixation time of
    this process will be denoted by
$$
S_{\mathcal J^{\iota}}:=\inf\{t\geq 0: \underline {J}^{\iota}(t)=\underline 2\};
$$
in particular, this time is larger than $1$.
  \end{enumerate}
\end{definition}

Infection in these epidemic processes indicates presence of the
beneficial type. Our second main result quantifies in terms of these processes how various migration rates affect the spread and the fixation time of the beneficial type. 

\begin{mytheorem}[Fixation times of $\mathcal X^\ast$]\label{t.2}
  For $\iota \in \{1,\ldots,,d\}$, let \label{T2} $\mathcal X^\ast =
  (\underline X^\ast(t))_{t\geq 0}$ be the solution of~\eqref{eq:SDE1}
  with $\underline X^\ast(0) = \underline 0$ and with the founder in
  colony $\iota${, see Remark \ref{founder}}. Then, depending
  on the scaling ratio between $\mu$
  and $\alpha$ as $\alpha\to\infty$, we have the following asymptotics
  for the fixation time $T_{\rm fix}$ defined in
  \eqref{eq:fixationTime} (now for $\underline{\mathcal X}^\ast$ in
  place of $\underline{\mathcal X}$):
  \begin{enumerate}
  \item If $\mu \in \Theta(\alpha)$, then
    \begin{align*}
      \frac{\alpha}{\log\alpha} T_{\rm{fix}}
      \xrightarrow{\alpha\to\infty}_p 2.
    \end{align*}
  \item More generally, if $\mu \in \Theta(\alpha^\gamma)$ for some
    $\gamma\in[0,1]$, then
    \begin{align*}
      \frac{\alpha}{\log\alpha} T_{\rm{fix}}
      \xrightarrow{\alpha\to\infty}_p S_{\mathcal I^{\iota,\gamma}} +2.
    \end{align*}
  \item If $\mu = \frac{1}{\log\alpha}$, then \vspace{-0.2cm}
    \begin{align*}
      \frac{\alpha}{\log\alpha}T_{\rm{fix}}
      \xRightarrow{\alpha\to\infty} S_{\mathcal J^\iota} +1.
    \end{align*}
  \end{enumerate}
\end{mytheorem}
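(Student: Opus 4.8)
The plan is to convert the conditioned interacting diffusion $\mathcal X^\ast$ into a tractable particle system via the moment dual and the time reversal of the equilibrium spatial ASG --- this is the content of the announced Proposition~\ref{P:main} --- and then to analyse that system colony by colony, keeping the panmictic three-phase picture of \cite{EtheridgePfaffelhuberWakolbinger2006}: within a colony the beneficial type first undergoes an \emph{establishment phase} from microscopic frequency up to a fixed small $\varepsilon$, then a fast \emph{bulk phase} from $\varepsilon$ to $1-\varepsilon$ of duration $\mathcal O(1/\alpha)=o(\log\alpha/\alpha)$, and finally a \emph{mop-up phase} from $1-\varepsilon$ to $1$ in which the residual wildtype dies out. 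The first ingredient is that, started from the founder in colony~$\iota$, establishment there takes $(1+o(1))\log\alpha/\alpha$: below level $1/\alpha$ the conditioned drift $\alpha X^\ast_\iota(1-X^\ast_\iota)\coth\!\bigl(\alpha\sum_j\rho_j X^\ast_j\bigr)$ is of order $1$ (since $\coth(x)\sim1/x$ as $x\to0$), so this regime is left in time $\mathcal O(1/\alpha)$, while on the window $[1/\alpha,\varepsilon]$ one has $\coth(\alpha\sum_j\rho_j X^\ast_j)=1+o(1)$, the drift is $\approx\alpha X^\ast_\iota$, and $X^\ast_\iota$ grows like $e^{\alpha t}$, so the window is traversed in time $\tfrac1\alpha\log(\varepsilon\alpha)\sim\log\alpha/\alpha$ --- dually, the time the (nearly critical) ASG branching subtree rooted at the founder needs to reach size $\Theta(\alpha)$. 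By the mirror-image analysis near fixation (where, by conservativity of the migration matrix, the total beneficial mass $\sum_j\rho_j X^\ast_j$ grows, and $\underline 1-\underline X^\ast$ decays, at rate exactly $\alpha$ along its slowest mode regardless of $\mu$), the mop-up phase performed by the last colony to be invaded also lasts $(1+o(1))\log\alpha/\alpha$. These two phases account for the additive $2$ in parts (i)--(ii).

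The heart of the proof is the \emph{spreading phase}, and here the regimes separate. Once a colony $k$ sits at frequency $\Theta(1)$ it feeds beneficial migrants into each neighbour $j$ (with $a(k,j)>0$) at Moran-level flux $\Theta(\mu N)$, each seeding a successful subsweep with probability $\Theta(\alpha/N)$, so that in colony $j$ the linearised dynamics becomes $dX_j\approx(\alpha X_j+c\mu)\,dt$ with a constant $c>0$ built from the relevant $\rho$'s and $a$'s, and $X_j$ reaches $\varepsilon$ after time $\tfrac1\alpha\log\tfrac{\varepsilon\alpha}{c\mu}$. If $\mu\in\Theta(\alpha)$ this is $\mathcal O(1/\alpha)=o(\log\alpha/\alpha)$, so spreading is instantaneous on the $\log\alpha/\alpha$ scale, consistent with $S_{\mathcal I^{\iota,1}}=0$; if $\mu\in\Theta(\alpha^\gamma)$ it equals $(1-\gamma+o(1))\log\alpha/\alpha$, i.e.\ every migration step costs a delay of $1-\gamma$ units, and propagating this along shortest paths in $G$ from~$\iota$ and taking the latest arrival makes the spreading phase last $(1-\gamma)\Delta_\iota\cdot\log\alpha/\alpha=S_{\mathcal I^{\iota,\gamma}}\cdot\log\alpha/\alpha$ up to $o(\log\alpha/\alpha)$ --- note the founder itself receives no such influx, which is why its establishment stays a full unit rather than being sped up. If instead $\mu=1/\log\alpha$, the expected number of successful migrants per rescaled time unit is $\Theta(\mu N)\cdot\Theta(\alpha/N)\cdot\Theta(\log\alpha/\alpha)=\Theta(\mu\log\alpha)$, which is of order one precisely for this scaling (and diverges when $\mu\in\Theta(\alpha^\gamma)$, $\gamma>0$, making the influx effectively deterministic as above); thus, after rescaling time by $\alpha/\log\alpha$, a colony $j$ adjacent to an already-established colony $k$ receives its first successful beneficial migrant after an exponential waiting time, and a careful evaluation of the constants reproduces the $0\to1$ rate $2\sum_k\rho_k a(k,j)\mathbf 1_{\{J^\iota_k=2\}}$ of $\mathcal J^\iota$, whereupon $j$ still needs its own establishment unit before it can feed its neighbours --- the deterministic $1\to2$ delay of $\mathcal J^\iota$. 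Hence the rescaled spreading phase converges in distribution to $S_{\mathcal J^\iota}$.

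It remains to assemble the phases and handle the overlaps. In parts (i)--(ii) the founder's establishment ($1$ unit), the spreading ($S_{\mathcal I^{\iota,\gamma}}$ units) and the mop-up of the last colony ($1$ unit) occur in this order, while --- crucially --- the mop-up phases of the \emph{intermediate} colonies run concurrently with the ongoing spreading, since a colony feeds its neighbours as soon as it is at frequency $\Theta(1)$, long before its own wildtype has vanished; hence those mop-ups do not add to the total and $\tfrac{\alpha}{\log\alpha}T_{\rm fix}\to_p S_{\mathcal I^{\iota,\gamma}}+2$, with part~(i) the case $\gamma=1$. In part~(iii) the founder's establishment unit is already the $1\to2$ delay carried by the founder colony inside $\mathcal J^\iota$, so only the last colony's mop-up sticks out and $\tfrac{\alpha}{\log\alpha}T_{\rm fix}\Rightarrow S_{\mathcal J^\iota}+1$. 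The main obstacle is to make this colony-by-colony decomposition rigorous and \emph{uniform in $\alpha$}: one must control the exponentially small frequencies and the events ``colony~$k$ is established by time~$t$'' simultaneously in all $d$ colonies, show that each of the $\mathcal O(1)$ sub-phase durations concentrates at its claimed value with error $o(\log\alpha/\alpha)$, and in part~(iii) upgrade the many-rare-migrants heuristic to a genuine Poisson limit for the seeding times while ruling out atypically early migrants that would let a colony establish or fix prematurely. The moment dual and the time reversal of the equilibrium ASG are precisely what make these estimates accessible, replacing the awkward $h$-transformed interacting diffusion by a branching--migration particle system on which the low-frequency analysis can be carried out directly.
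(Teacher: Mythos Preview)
Your heuristic outline is correct and matches closely the paper's own heuristics in Remark~\ref{interpret} and Section~\ref{sec4.1}: the three-phase decomposition within each colony, the $(1-\gamma)$-unit delay per migration step when $\mu\in\Theta(\alpha^\gamma)$, and the Poisson structure of successful migrants when $\mu=1/\log\alpha$. However, the \emph{rigorous} proof in the paper takes a different route from the one you sketch. You analyse the conditioned diffusion $\mathcal X^\ast$ directly --- the $\coth$-drift near $\underline 0$, the linearised dynamics $dX_j\approx(\alpha X_j+c\mu)\,dt$, the Moran-level migrant fluxes involving the population size $N$ --- with the dual mentioned only as the device that ``makes these estimates accessible''. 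The paper instead uses Proposition~\ref{P:main} to transfer the problem \emph{entirely} to the dual side: after that reduction, the object of study is the pair of particle-count processes $(\underline L,\underline M)$ with explicit jump rates on $\mathbb N_0^d$, and the whole analysis becomes a sequence of hitting-time estimates for one-dimensional birth--death processes (Lemmas~\ref{l:numberASGnew}--\ref{l:3}). The establishment phase is the time for $M^\iota$ --- a genuinely \emph{supercritical} branching process (birth rate $\alpha k$, death rate $o(\alpha k)$ while $k\ll\alpha$), not a nearly-critical one as you write --- to reach $\Theta(\alpha)$; the spreading phase is governed by the immigration term $\mu a(i,j)M^i$ in the birth rate of $M^j$ (Lemma~\ref{l:1} and Corollary~\ref{r:1}); and the mop-up phase is the extinction time of the subcritical process $\sum_i(L^i-M^i)$ (Lemma~\ref{l:2}). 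This buys concreteness --- every estimate is about explicit integer-valued jump processes, with no $\coth$ and no multi-dimensional $h$-transform --- at the price of first establishing the conditional duality (Lemma~\ref{dualcondfix}) and the approximation Lemma~\ref{cdinf}. Your diffusion-side programme could in principle be made rigorous, but controlling the $d$-dimensional conditioned SDE directly (in particular, the interaction between colonies at exponentially small frequencies) would be substantially harder, and your appeal to the finite population size $N$ would have to be replaced by a genuine diffusion-level argument.
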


\begin{remark}\label{interpret}[Interpretation]
  Let us briefly give some heuristics for the three cases of the
  Theorem. The bottomline of our argument is this: Given a colony~$i$
  is already ``infected'' by the beneficial mutant, the most probable
  scenario (as $\alpha \to \infty$) is that the beneficial type in
  colony~$i$ grows until migration exports the beneficial type to
  other colonies which can be reached from colony~$i$. We argue with
  \emph{successful lines}, which are -- in a population undergoing
  Moran dynamics as in Remark~\ref{rem:Moran} -- individuals whose
  offspring are still present at the time of fixation.
  
  For notational simplicity, we discuss here the situation $d=2$
  with the founder of the sweep being in colony~$\iota=1$. The three
  cases allow us to distinguish when the first successful migrant
  (carrying allele $\mathpzc B$ and still having offspring at the time
  of fixation) moves to colony~2.
  \begin{figure}
  \hspace{0cm} (A) $\mu \in \Theta(\alpha^\gamma)$
  \begin{center}
    {\includegraphics[width=12.2cm,clip]{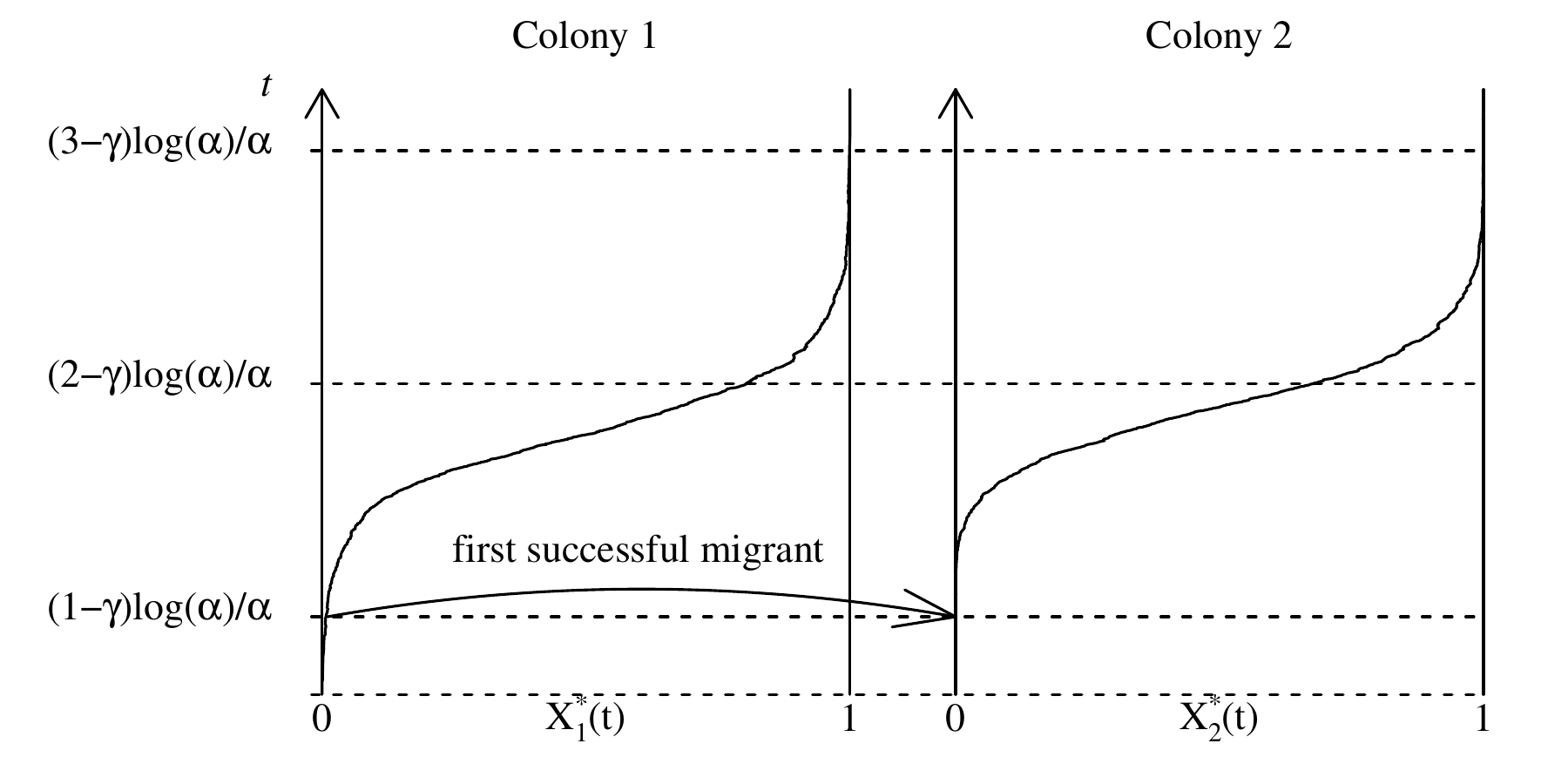}}
  \end{center}
\hspace{0cm} (B) $\mu = 1/(\log(\alpha))$
  \begin{center}
    {\includegraphics[width=12.2cm,clip]{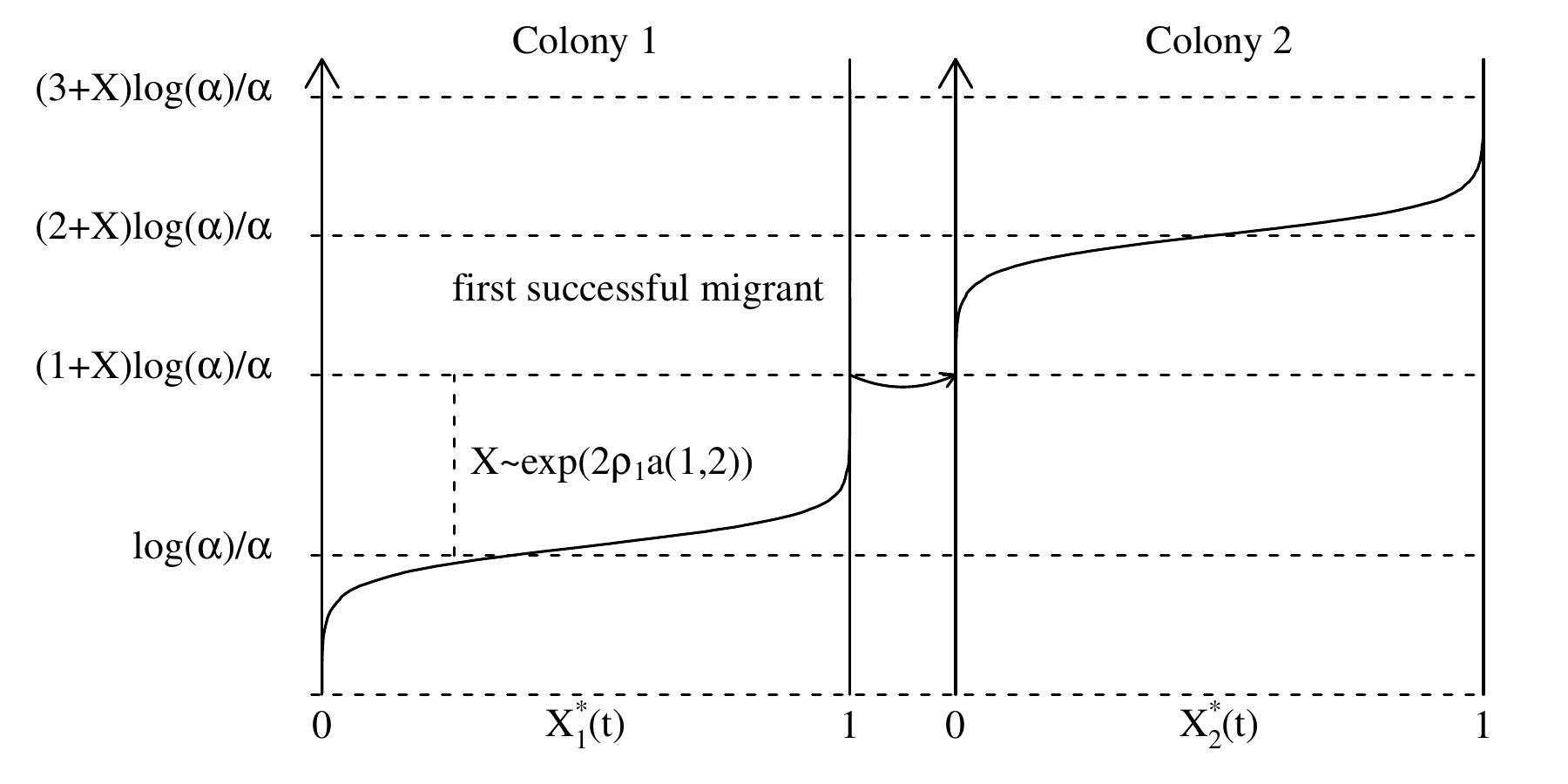}}
  \end{center}
  \caption{\label{fig:thm2} Two examples of a sweep in a structured
    population of $d=2$ colonies. (A) For
    $\mu\in\Theta(\alpha^\gamma)$, the epidemic model $\mathcal{
      I}^{1,\gamma}$ from Theorem~\ref{T2} starts with ${\underline
      I}^1(0)=(1,0)$. The first successful migrant transports the
    beneficial allele to colony~2 at time~$1-\gamma$ (on the
    time-scale $\log(\alpha)/\alpha$). Hence, fixation occurs
    approximately at time $(3-\gamma)\log(\alpha)/\alpha$. (B) For
    $\mu=1/(\log\alpha)$, the epidemic model ${\mathcal J}^1$ from
    Theorem~\ref{T2} starts with ${\underline J}^1(0)=(1,0)$. The
    first successful migrant transports the beneficial allele to
    colony~2 at time $1+X$, where $X$ is an $\exp(2 \rho_1 a(1,2)))$
    distributed waiting time. Then, ${\underline J}^1(1+X)=(2,1)$ and
    thus $S_{{\mathcal J}^1} = 2+X$. From here on, fixation in
    colony~2 takes {one more unit} of time. In total, fixation occurs
    approximately at time $(1+S_{{\mathcal J}^1})\log(\alpha)/\alpha =
    (3+X)\log(\alpha)/\alpha$.
    \\
    For both figures we simulated a Wright-Fisher model, distributed
    on two colonies of equal size, i.e.\
    $a(1,2)=a(2,1)=b(1,2)=b(2,1)=1$ and $\rho_1 = \rho_2 = 1/2$. In
    (A), we used the following parameters: Each colony has size
    $N=10^4$, $m= 0.001$ is the chance that an individual chooses its
    ancestor from the other colony, and $s = 0.01$ is the (relative)
    fitness advantage of beneficials, per generation. This amounts to
    $\gamma = \log(N \cdot s) / \log(N\cdot m) = 2/3$. In (B), we used
    $N=10^5$, $s=0.1$ and $N\cdot m = 1/(\log N\cdot s)$.  }
\end{figure}
  \begin{enumerate}
  \item $\mu\in\Theta(\alpha)$: Since in colony~1 the number of
    successful lines grows like a Yule process with branching
    rate~$\alpha$, migration of the first successful line will occur
    already while the Yule process has $\mathcal O(1)$ lines, i.e.\ at
    a time of order $1/\alpha$ if $\mu\in \Theta(\alpha)$.  From here
    on, the beneficial allele has to fix in both colonies, which
    happens in time $2\log(\alpha)/\alpha$ on each of the colonies.

    We conjecture that this assertion is valid also for the case
    $\mu/\alpha \to \infty$, since intuitively a still higher
    migration rate should result in a panmictic situation due to an
    averaging effect. However, so far our techniques, and in
    particular our fundamental Lemma~\ref{l:numberASGnew}, do not
    cover this case.
  \item $\mu\in\Theta(\alpha^\gamma), 0\ \leq \gamma<1$: Again, the
    question is when the first successful migrant goes to
    colony~2. (In the epidemic model from
    Definition~\ref{def:auxProc}.1, this refers to infection of colony
    2.)  We will argue that this is the case after a time
    $(1-\gamma)\log(\alpha)/\alpha$. Indeed, by this time, the Yule
    process approximating the number of successful lines in colony~1
    has about $\exp(\alpha(1-\gamma)\log(\alpha)/\alpha) =
    \alpha^{1-\gamma}$ lines, each of which travels to colony~2 at
    rate $\alpha^\gamma$, so by that time the overall rate of
    migration to colony~2 is~$\alpha$. More generally, at time $x
    \log(\alpha)/\alpha$, the rate of successful migrants is
    $\alpha^{\gamma+x}$. So, if $\gamma+x<1$, the probability that a
    successful migration happens up to time $x\log(\alpha)/\alpha$ is
    negligible, whereas if $\gamma+x>1$, the probability that a
    successful migration happens up to time $x\log(\alpha)/\alpha$ is
    close to 1. By these arguments, the first successful migration
    must occur around time $(1-\gamma)\log(\alpha)/\alpha$ and the
    time it then takes to fix in colony~2 is again
    $2\log(\alpha)/\alpha$.
  \item $\mu = 1/(\log\alpha)$: Here, migration is so rare that we
    have to wait until almost fixation in colony~1 before a successful
    migrant comes along. Consider the new timescale whose time unit is $\log \alpha / \alpha$, so that migration
    happens at rate $a(1,2)/\alpha$ per individual on this timescale. Roughly, after
    time~1 (in the new timescale), the beneficial allele is almost
    fixed in colony~1. 

    For $N\gg \alpha$, a migrant is successful approximately with
    probability $2\alpha/N$, given by the survival probability of a
    supercritical branching process. So, if one of $N \rho_1$ lines on
    colony~1 migrates, each at rate $a(1,2)/\alpha$, and with the
    success probability being $2\alpha/N$, the rate of successful
    migrants is
    $N \rho_1 \tfrac {a(1,2)}{\alpha} \tfrac{2\alpha}N = 2\rho_1
    a(1,2)$.
    At this rate, the second colony obtains a successful copy of the
    beneficial allele. Thus, in terms of the epidemic model from 2.\
    in Definition~\ref{def:auxProc}, the first colony is infectious if
    allele $\mathpzc B$ is almost fixed there. From the time of the
    first successful migrant on, it takes again time 1 (in the new
    timescale) until the beneficial allele almost fixes in colony
    2. This is when the state of colony 2 in the epidemic model
    changes from 1 (infected) to 2 (infectious).
  \end{enumerate}
\end{remark}

The proof of Theorem~\ref{T2} is given in Section \ref{S:proofs2}. 

\begin{remark}\label{KM}
 In \cite{KimMaruki2011} (see also \cite{Slatkin1976}), it is derived in a heuristic manner that for $s \ll 1 $ and  $sN= \alpha > \mu = mN \gg 1$  the
 time to the first successful migrant is $ \sim \frac 1 \alpha \log (1+\frac \alpha \mu)$. At least for $\mu \in \Theta(\alpha^\gamma)$, $0\le \gamma\le 1$, this is confirmed by our Theorem 2.
\end{remark}

\begin{remark}[Different strengths of migration\label{rem:different}]
  The key argument mentioned at the beginning of Remark
  \ref{interpret} continues to hold if the migration intensity between
  colonies is not of the same order of magnitude. More precisely,
  assume that the asymptotics of the gene flows as $\alpha \to \infty$
  is of the form $\mu \rho_i a(i,j)=\mu \rho_j b(j,i) \in
  \Theta(\alpha^{\gamma_{ij}})$, where the exponents
  $(\gamma_{ij})_{i,j=1,\dots,d} \in [0,1]^{d\times d}$ may vary with
  $i,j$ (possibly also due to a strongly varying colony size).

  Then colony~$j$ can become infected from neighbouring colonies only
  if one of the neighbouring colonies (i) is infected and (ii) carries
  enough beneficial mutants in order to infect colony~$j$. So again
  the fixation time of the beneficial allele can be computed from
  taking the minimal time it takes to infect all colonies across the
  graph $G$, plus the final phase of fixation of the beneficial
  allele.  Consequently, the epidemic process $\mathcal I^\iota :=
  \mathcal I^{\iota, \gamma}$ from Definition~\ref{def:auxProc} can be
  changed to $\mathcal I^{\iota, \underline{\underline \gamma}}$ as
  follows: As soon as for some $i$ the process $I_i^\iota$ reaches the
  value 1, then after an additional fixed time of length $1-\gamma_{ij}$
  all of the $I_j^\iota$ for which $a(i,j)>0$ are set to~1. 
  
  In the sequel we focus on the case $\gamma_{ij} \equiv \gamma$ of a spatially
  homogeneous asymptotics in order to keep the presentation
  transparent. We emphasise however, that our proofs are designed in a
  way which makes the described generalization feasible.
\end{remark}

\section[The ASG]{The ancestral selection graph}
\label{S:ASG}
A principal tool for the analysis of interacting Wright--Fisher
diffusions with selection is their duality with the ancestral
selection graph (ASG) of Krone and Neuhauser, which we recall in
detail below. The main idea for the proof of Theorems \ref{t.1} and
\ref{t.2} is
\begin{itemize}
\item to obtain via the ASG a duality relationship and a Kingman
  paintbox representation also for the diffusion process
  ${\mathcal X^\ast}$ (i.e. the process conditioned to get absorbed at
  $\underline 1$), and to represent $T_{\text{fix}}$ via duality,
\item to show how the equilibrium ASG and its time-reversal can be
  employed for asymptotic calculations as $\alpha \to \infty$.
\end{itemize}
This structure allows us to use the techniques of (multidimensional)
birth-death processes in order to perform the asymptotic analysis
using bounds based on sub- and supercritical branching processes.

In the present section we will focus on the two bullet points, while
the asymptotic analysis of the birth-death processes is in Section
\ref{S:proofs2}, with the basic heuristics in Section \ref{sec4.1}. To
carry out this program we proceed as follows:

In Section~\ref{S30} we will give an informal description of the ASG
and present some of the central ideas of the subsequent proofs. We
will also state a key proposition (Proposition~\ref{P:main}) which
gives a connection between the fixation time and a two-dimensional
birth-and-death process that describes the percolation of the
beneficial type within the equilibrium ASG.  We give a formal
definition of the structured ASG {via a particle
  re\-presentation} in Section \ref{ASGdef} and derive a time-reversal
property in Section \ref{timerev}, which will be important in the
proof of Proposition~\ref{P:main}. In the subsequent sections we will
derive paintbox representations for the solutions of \eqref{eq:SDE0}
and \eqref{eq:SDE1} {using the duality relationships
  from above}, and complete the proofs of Proposition~\ref{P:main} and
Theorem \ref{t.1}.

\subsection{Outline of proof strategy and a key proposition}
\label{S30}
The basic tool for proving Theorems \ref{t.1} and \ref{t.2} will be a representation of $\underline X^\ast(\tau)$ (the solution of \eqref{eq:SDE1} at a fixed time $\tau$) 
in terms of an exchangeable particle system. This representation is first achieved for initial
conditions $\underline x \in [0,1]^d \setminus \{\underline 0\}$, and then also for the entrance laws from $\underline 0$. At the heart
of the construction is a {\em conditional duality} which extends the classical duality between the (unconditioned) $\mathcal X$ (the solution
of \eqref{eq:SDE0}) and the {\em structured ancestral selection graph}.
%

The latter is constructed in terms of a branching-coalescing-migrating
system {$\mathcal A = (\mathcal A_r)_{r\geq 0}$} of particles, where each
pair of particles in colony~$i$

- {\em coalesces} at rate
$1/\rho_i$, $i=1,\dots,d$, 
\\
and each particle in colony~$i$ 

-  {\em branches} (i.e.\ splits into two) at rate $\alpha$,

-  {\em migrates} (i.e.\ jumps) to colony $j$ at rate $\mu
b(i,j)$. 

When the starting configuration of ${\mathcal A}$ consists of $k_i$ particles in colony~$i$, $i=1,\ldots,
d$, we will speak  of a $\underline
k$-ASG, where
  for brevity we write $\underline k :=
(k_i)_{i=1,\dots,d}$. 
A~more refined definition of ${\mathcal A}$, which will also allow to speak of a {\em connectedness relation} between particles at
different times, will be given in Sections \ref{ASGdef} and \ref{genrel}. With this refined definition, each particle in ${\mathcal A_r}$ is
represented as a point in $\{1,\ldots,d\}\times [0,1]$, the first component referring to the colony in which the particle is located, and the
second component being a {\em label} which is assigned independently and uniformly at each branching, coalescence and migration event. The ASG
then records the trajectories of all the particles in ${\mathcal A}$, see 
Figure~\ref{figZ}(a) for an illustration.

\begin{figure}
\begin{tikzpicture}
  \node at (0,6){
  \pgftext{ { \selectfont (a)
  } }};

\draw[->] (0,5)--(0,0);

  \node at (-.25,4.5){
  \pgftext{ { \selectfont 0
  } }};
  
  \node at (-.25,0.5){
  \pgftext{ { \selectfont $\tau$
  } }};



\draw[] (2,4.5)--(2,2);
\draw[] (4,4.5)--(4,3.5);
\draw[] (3,3.5)--(3,2);
\draw[] (5,3.5)--(5,2.5);
\draw[] (3.5,2)--(3.5,0.5);
\draw[] (4.5,1.5)--(4.5,0.5);

\draw[line width= 2pt] (1,4.5)--(6.5,4.5);
\draw[dashed] (3,3.5)--(5,3.5);
\draw[dashed] (2,2)--(3.5,2);
\draw[dashed] (4.5,1.5)--(9.5,1.5);
\draw[dashed] (5,2.5)--(9.3,2.5);
\draw[dashed] (9.8,2.5)--(11,2.5);
\draw[line width = 2pt] (1,0.5)--(6.5,0.5);

  \node at (1,5){
  \pgftext{ { \selectfont 0
  } }};
  
    \node at (6.5,5){
  \pgftext{ { \selectfont 1
  } }};
  
    \node at (3.75,0){
  \pgftext{ { \selectfont colony 1
  } }};


\draw[] (9,4.5)--(9,3);
\draw[] (10.5,4.5)--(10.5,3);
\draw[] (11.5,4.5)--(11.5,2);
\draw[] (12.5,4.5)--(12.5,4);
\draw[] (9.5,3)--(9.5,1.5);
\draw[] (11,2.5)--(11,0.5);
\draw[] (10,0.5)--(10,2);
\draw[] (12,4)--(12,2);
\draw[] (13,4)--(13,0.5);

\draw[line width= 2pt] (8,4.5)--(13.5,4.5);
\draw[dashed] (9,3)--(10.5,3);
\draw[dashed] (12,4)--(13,4);
\draw[dashed] (10,2)--(10.8,2);
\draw[dashed] (11.2,2)--(12,2);
\draw[line width= 2pt] (8,.5)--(13.5,.5);

 \tkzDefPoint(11,2){O}
  \tkzDefPoint(10.8,2){A}
 \tkzDrawArc[rotate,dashed](O,A)(180)

  \tkzDefPoint(9.5,2.5){O}
  \tkzDefPoint(9.3,2.5){A}
 \tkzDrawArc[rotate,dashed](O,A)(180)

  \node at (8,5){
  \pgftext{ { \selectfont 0
  } }};
  
    \node at (13.5,5){
  \pgftext{ { \selectfont 1
  } }};
  
    \node at (10.75,0){
  \pgftext{ { \selectfont colony 2
  } }};
 
\end{tikzpicture}
\begin{tikzpicture}
\node at (0,7){
  \pgftext{ { \selectfont \textcolor{white}{(b)}
  } }};

\node at (0,6){
  \pgftext{ { \selectfont (b)
  } }};

\draw[->] (0,5)--(0,0);

  \node at (-.25,4.5){
  \pgftext{ { \selectfont 0
  } }};
  
  \node at (-.25,0.5){
  \pgftext{ { \selectfont $\tau$
  } }};



\draw[] (2,4.5)--(2,2);
\draw[line width= 3pt] (4,4.5)--(4,3.5);
\draw[] (3,3.5)--(3,2);
\draw[line width= 3pt] (5,3.5)--(5,2.5);
\draw[] (3.5,2)--(3.5,0.5);
\draw[] (4.5,1.5)--(4.5,0.5);

\draw[line width= 2pt] (1,4.5)--(6.5,4.5);
\draw[dashed] (3,3.5)--(5,3.5);
\draw[dashed] (2,2)--(3.5,2);
\draw[dashed] (4.5,1.5)--(9.5,1.5);
\draw[dashed] (5,2.5)--(9.3,2.5);
\draw[dashed] (9.8,2.5)--(11,2.5);
\draw[line width = 2pt] (1,0.5)--(6.5,0.5);

     \node at (2,5){
  \pgftext{ { \selectfont $\mathpzc b$ 
  } }};

       \node at (4,5){
  \pgftext{ { \selectfont $\mathpzc B$ 
  } }};
  
     \node at (3.5,0){
  \pgftext{ { \selectfont $\mathpzc b$ 
  } }};

       \node at (4.5,0){
  \pgftext{ { \selectfont $\mathpzc b$ 
  } }};
  
    \node at (3.75,-0.5){
  \pgftext{ { \selectfont colony 1
  } }};


\draw[] (9,4.5)--(9,3);
\draw[] (10.5,4.5)--(10.5,3);
\draw[line width= 3pt] (11.5,4.5)--(11.5,2);
\draw[line width= 3pt] (12.5,4.5)--(12.5,4);
\draw[] (9.5,3)--(9.5,1.5);
\draw[line width= 3pt] (11,2.5)--(11,0.5);
\draw[line width= 3pt] (10,0.5)--(10,2);
\draw[line width= 3pt] (12,4)--(12,2);
\draw[] (13,4)--(13,0.5);

\draw[line width= 2pt] (8,4.5)--(13.5,4.5);
\draw[dashed] (9,3)--(10.5,3);
\draw[dashed] (12,4)--(13,4);
\draw[dashed] (10,2)--(10.8,2);
\draw[dashed] (11.2,2)--(12,2);
\draw[line width= 2pt] (8,.5)--(13.5,.5);

 \tkzDefPoint(11,2){O}
  \tkzDefPoint(10.8,2){A}
 \tkzDrawArc[rotate,dashed](O,A)(180)

  \tkzDefPoint(9.5,2.5){O}
  \tkzDefPoint(9.3,2.5){A}
 \tkzDrawArc[rotate,dashed](O,A)(180)

     \node at (9,5){
  \pgftext{ { \selectfont $\mathpzc b$ 
  } }};

     \node at (10.5,5){
  \pgftext{ { \selectfont $\mathpzc b$ 
  } }};

       \node at (11.5,5){
  \pgftext{ { \selectfont $\mathpzc B$ 
  } }};
  
      \node at (12.5,5){
  \pgftext{ { \selectfont $\mathpzc B$ 
  } }};
  
     \node at (11,0){
  \pgftext{ { \selectfont $\mathpzc B$ 
  } }};
  
   \node at (10,0){
  \pgftext{ { \selectfont $\mathpzc B$ 
  } }};
  
    \node at (13,0){
  \pgftext{ { \selectfont $\mathpzc b$ 
  } }};
  
    \node at (10.75,-0.5){
  \pgftext{ { \selectfont colony 2
  } }};
 
\end{tikzpicture}
\caption{\label{figZ} (a) A realisation of the $\underline k$-ASG in
  the time interval $[0, \tau]$ with $2$ colonies, and $\underline k =
  (2,4)$. Initially and at each coalescence, branching and migration
  event, independent and uniform$[0,1]$-distributed
  labels are assigned to the particles, and the {\em genealogical connections} of particles are recorded (visualised by the horizontal dashed lines).\\
  (b) The same realisation of the ASG as in Figure \ref{figZ}(a), now
  showing the particle's types. Two of the five particles in
  ${\mathcal A_\tau}$ are marked with
  $\mathpzc B$. Percolation of type $\mathpzc B$ happens ``upwards''
  along the ASG: all those particles in the $(2,4)$-sample
  ${\mathcal A_0}$ are assigned type
  $\mathpzc B$ which are connected to a type $\mathpzc B$-particle in
  ${\mathcal A_\tau}$.}
\end{figure}
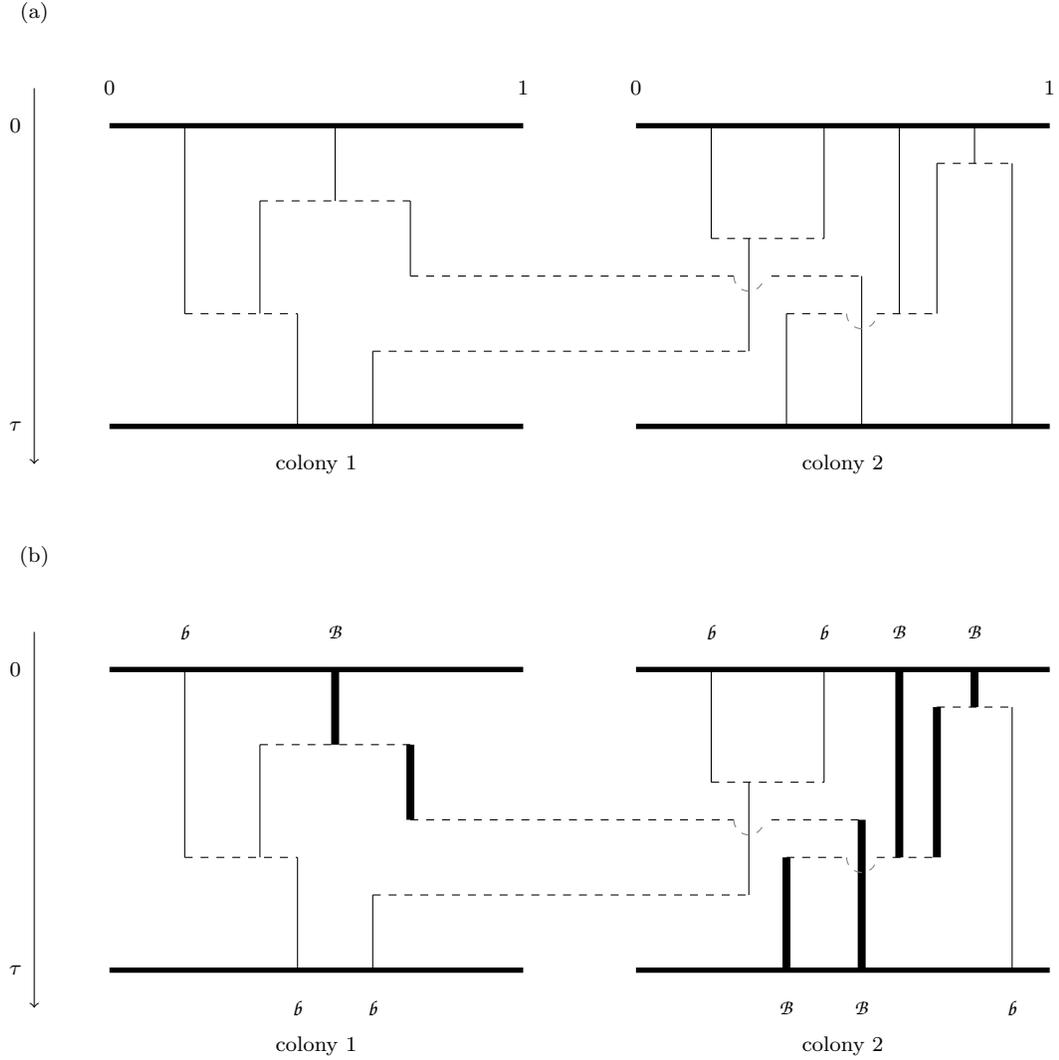


Writing $K_r^{\underline k}(i)$ for the number of particles in the $\underline k$-ASG  in colony $i$ at time $r$ 
and using the notation
\begin{equation}\label{prodnot}
(\underline 1-\underline y)^{\underline \ell} := \prod_{i=1}^d
  (1-y_i)^{\ell_i}, \quad \underline y =(y_1,\ldots, y_d) \in [0,1]^d, \, \underline \ell = (\ell_1, \ldots, \, \ell_d) \in \mathbb N_0^d,
  \end{equation}
we have a moment duality between $\underline K= (K(i))_{i=1,\ldots,d}$ and the solution $\mathcal X$ of~\eqref{eq:SDE0}:
\begin{equation}\label{basedual}
\mathbf E_{\underline x} [(\underline
    1-\underline X(\tau))^{\underline k}] = \mathbb E [(\underline
    1-\underline x)^{\underline K^{\underline k}_\tau}], \qquad \underline x\in [0,1]^d, \, \underline k\in \mathbb N_0^d, \, \tau \ge 0.
    \end{equation}
Here and in the following, we
denote the probability measure that underlies the particle process ${\mathcal A}$ (and processes related to it) by $\mathbb
P$ (and thus distinguish it from the probability measure $\mathbf
P_{\underline x}$ that underlies the diffusion process $\mathcal X$ 
appearing in~\eqref{eq:SDE0} as well as the corresponding processes, like $\mathcal X^\ast$). Analogously, we use these notation types for the corresponding expectations and variances.
The proof of the {\em basic duality relationship} \eqref{basedual} will be recalled in Lemma \ref{duality}.

Eq.\eqref{basedual} has a conceptual interpretation in population genetics terms: We know that $\underline X(\tau)$ is the vector whose $i$-th coordinate
is the frequency of the beneficial type $\mathpzc B$ in colony $i$ at time $\tau$ when $\underline X(0) = \underline x$. Thus, the left hand side of \eqref{basedual} is
the probability that nobody in a $\underline k$-sample drawn from the population (with $k_i$ individuals drawn from colony $i$, $i=1,\ldots, d$) is of type $\mathpzc B$, given
that $\tau$ time units ago the type frequencies were $\underline x$. 
In the light of a Moran model with selection (whose diffusion limit yields the process $\mathcal X$), the particles' trajectories in the ASG can be interpreted as {\em potential
ancestral  lineages} of the $\underline k$-sample. The type of {a particle} in the
sample can be recovered by a simple rule: it is the beneficial type
$\mathpzc B$ if and only if at least one of its potential ancestors
carries type $\mathpzc B$. In other words, the beneficial type {\em
  percolates upwards} along the lineages of the ASG; see 
Fig.~\ref{figZ}(b) for an illustration.

Consequently, the event that nobody in the $\underline k$-sample is of type $\mathpzc B$ equals the event that nobody of the sample's potential
ancestors is of type $\mathpzc B$. The probability of this event, however, is just the right
hand side of \eqref{basedual}. Thus, Eq.~\eqref{basedual} expresses the probability of one and the same event in two different ways.   

We will argue in Sec.~\ref{secpaintbox} that the process
${\mathcal A}$ can be started with infinitely many particles in each
colony, with the number of particles immediately coming down from
infinity. This process will be denoted by
${\mathcal A^{\underline \infty}}$. If one marks the particles in
${\mathcal A^{\underline \infty}_\tau}$ independently with
probabilities given by $\underline x$ and lets the types percolate
upwards along the ASG, then one obtains for each
$i\in \{1,\ldots, d\}$ an exchangeable marking of the particles in
${\mathcal A}^{\underline \infty}_0$ that are located in colony
$i$. Let us denote by $F^{\underline x,\tau}_i$ the relative frequency
of the marked particles within all particles of
${\mathcal A}^{\underline \infty}_0$ that are located in colony $i$;
due to de Finetti's theorem, for each $i$, the quantity
$ F^{\underline x,\tau}_i$ exists a.s.  Based on the duality
relationship \eqref{basedual} we will show in Lemma \ref{paintboxrep}
that
$$\mathbf P_{\underline x}(\underline X(\tau)\in (\cdot)) =
\mathbb P(\underline F^{\underline x,\tau}\in (\cdot)), \qquad
\underline x\in [0,1]^d \setminus \{\underline 0\}, \, \tau \ge 0.$$
Following Aldous' terminology (see e.g.\ p.\ 88 in \cite{Aldous1985})
we will call this a ``Kingman paintbox'' representation of $\underline
X(\tau)$.

In order to find a similar representation for
$\underline X^\ast(\tau)$, we will use a coupling of { two processes,
  denoted $\mathcal Z := \mathcal Z^{\underline\infty}$ and
  $\mathcal Y$, which both follow the same dynamics as $\mathcal
  A$.
  Here, $\mathcal Z^{\underline \infty}$ starts with
  $\mathcal Z^{\underline \infty}_0 = \underline\infty$ and
  $\mathcal Y_0$ is an equilibrium configuration of the
  coalescence-branching-migration dynamics described above.}
(As we will prove in Proposition \ref{lemeq}, the particle numbers in
equilibrium constitute a Poisson configuration with intensity measure
$(2\alpha\rho_1,\dots,2\alpha\rho_d)$, conditioned to be non-zero.)
{ Since $\mathcal Z$ and $\mathcal Y$ follow the same
  exchangeable dynamics, we can embed both in a single}
  particle system~$\mathcal A$ which starts in
the a.s.  disjoint union $\mathcal A_0:= \mathcal Y_0 \cup \mathcal Z
_0$ and follows the coalescence-branching-migration dynamics.
{Then, $\mathcal Y$ arises by following particles within
  $\mathcal Y_0$ along $\mathcal A$ and $\mathcal Z$ arises by
  following particles within $\mathcal Z_0$ along $\mathcal A$.}

Let $\mathcal A^{(\underline x)}_\tau$
denote the subsystem of marked particles of $\mathcal A_\tau= \mathcal Y_\tau\cup \mathcal Z_\tau$ which arises
by an independent marking with probabilities $\underline x$. We will prove in  Lemma \ref {dualcondfix} that
\begin{equation*} 
  \mathbf E_{\underline x} [(\underline
  1-\underline X^\ast(\tau))^{\underline k}] = \mathbb P(\mathcal Z^{\underline k}_\tau\cap \mathcal A^{(\underline x)}_\tau =
  \varnothing | \mathcal Y_\tau \cap  \mathcal A^{(\underline x)}_\tau\neq \varnothing), \qquad \underline x\in [0,1]^d \setminus \{\underline 0\}, \, \underline k\in \mathbb N_0^d, \, \tau \ge 0,
\end{equation*}
with $\mathcal Z^{\underline k}$ started in $\underline k$ particles.
This {\em conditional duality relationship} will be crucial for
deriving the paintbox representation for $\underline
X^\ast(\tau)$.
With the notation $\underline F^{\underline x,\tau}$ introduced above
for the vector of frequencies of the marked particles we will prove in
Lemma \ref{fixfreq} that
$$\mathbf P_{\underline x}(\underline X^\ast(\tau)\in (\cdot)) =
\mathbb P(\underline F^{\underline x,\tau}\in (\cdot)\mid \mathcal
Y_\tau\cap \mathcal A^{(\underline x)}_{\tau}\neq \varnothing), \qquad
\underline x\in [0,1]^d \setminus \{\underline 0\}, \, \tau \ge 0.$$

\begin{figure}
  \hspace{1cm}
  \parbox{5cm}{ \hspace{0.5cm}$\mathcal Y$}  \hspace{1cm}
  \parbox{5cm}{$\mathcal Z^{\underline \infty}$}

  \vspace{-.75cm}

  \begin{center}
    {\includegraphics[width=13cm,trim=0 45 0 0,clip]{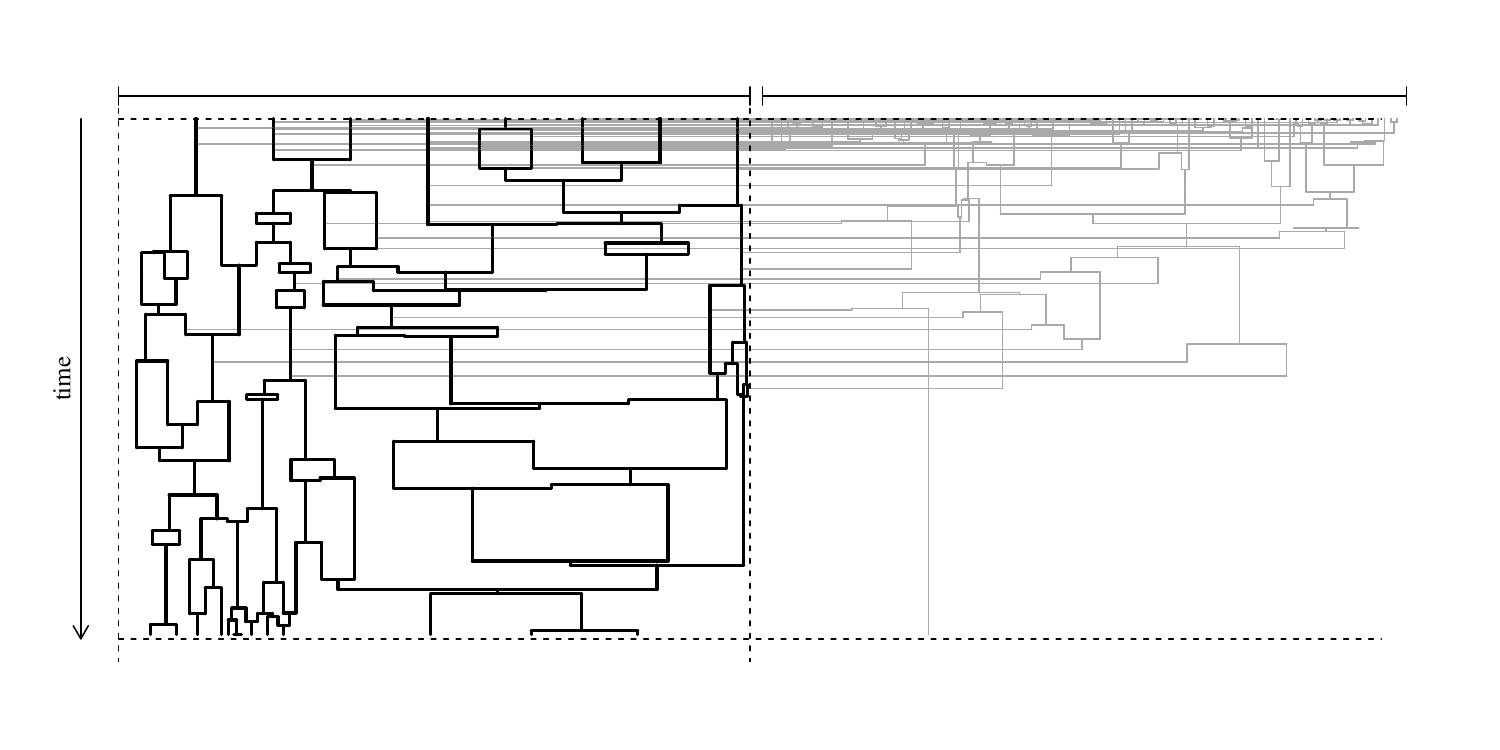}}
  \end{center}
  \caption{\label{fig:asg2} The paintbox representations constructed
    in Section \ref{secpaint} uses two particle systems that are
    coupled to each other.  Initially, these two systems are disjoint,
    and the coupling consists in a (local) coalescence between the two
    ASG's as illustrated in the figure. The potential ancestors of the
    sample on top of the figure are found at the bottom of the
    figure. The bold lines (in the left part of the figure) belong to
    $\mathcal Y$, the gray lines belong to
    $\mathcal Z\setminus\mathcal Y$.}
\end{figure}
  
Let us emphasize that the conditioning under the event $\{\mathcal
Y_\tau \cap \mathcal{A}^{(\underline x)}_\tau\neq \varnothing\}$
affects the distribution of $\mathcal Y$, i.e.\ takes it out of
equilibrium { and changes its dynamics between times $0$
  and $\tau$}. We will denote the vector of particle numbers in
$\mathcal Y_r$ by $\underline N_r$, $r\ge 0$.
  
Now consider, for some $\iota \in \{1,\ldots, d\}$ and $0< \varepsilon
<1$, the vector $\underline x = \varepsilon \underline e_\iota$,
meaning that initially a fraction $\varepsilon$ of the {particles} in
colony $\iota$ is of beneficial type while all the other colonies
carry only the inferior type $\mathpzc b$. In the limit $\varepsilon
\to 0$ the conditioning under the event $\{{\mathcal Y_\tau \cap
  \mathcal A_\tau^{(\varepsilon \underline e_\iota)}}\neq
\varnothing\}$ amounts to changing the distribution of $\underline
N_\tau$ from its equilibrium distribution to the distribution of
$\underline \Pi +\underline e_\iota$, where $\underline
\Pi$ is Poi($2\alpha\underline \rho$)-distributed, see
Remark~\ref{Poisrem}.  This will result in a paintbox representation for
the distribution of $\mathcal X^{\ast}(\tau)$ under the measure
$\mathbf P^\iota_{\underline 0}$ which appears in Theorem~\ref{t.1},
see Corollary~\ref{corlimit}~a).  The event that, in the system \eqref
{eq:SDE1}, fixation of the beneficial type has occurred by time~$\tau$
can then be reexpressed as the event that the (one) marked particle in
$\mathcal Y_\tau$ is among the potential ancestors of {\em all} the
infinitely many {particles} in $\mathcal Z^{\underline \infty}_0$, see
Corollary~\ref{corlimit} c).

We will show in Lemma \ref{cdinf} and in Corollary~\ref{corfix} that
frequencies within $\mathcal Y$ and $\mathcal Z$ are very close, such
that for
the distribution of the fixation time on the $\log
(\alpha)/\alpha$-timescale it will suffice to study the probability
that the marking of a single particle in colony $\iota$ at time $\tau$
percolates ``upwards'' through $\mathcal Y$ in the time interval
$[0,\tau]$.  This analysis is most conveniently carried through in the
{\em time reversal} $\hat {\mathcal Y}$ of $\mathcal Y$, whose
migration rates are reversed as given by Equation \ref{eq:aij}.  The
event $\{{\mathcal Y_\tau \cap \mathcal A_\tau^{(\varepsilon
    \underline e_\iota)} } \neq \varnothing\}$ is the same as
$\{{\hat{\mathcal Y}_0 \cap \mathcal A_0^{(\underline x)} }\neq
\varnothing\}$; thus the conditioning changes the initial condition of
${\hat{ \mathcal{Y}}}$ but not its dynamics { (whereas, as
  mentioned above, the dynamics of $\mathcal Y$, \emph{is} changed by
  the conditioning)}.
  
\noindent
We will write
$(\underline M_t)_{t\geq 0}$ for the counting process of the marked particles in $(\hat {\mathcal Y}_t)_{t\ge0}$, and $(\underline L_t)_{t\geq 0}$  for the
counting process of {\em all} particles in $(\hat {\mathcal Y}_t)_{t\ge0}$. 
 The dynamics of the bivariate
process $ (\underline L_t)_{t\geq 0}, \underline M_t)_{t\geq 0}) $ is described next, together with the key
result how to use the ASG for approximating the fixation time under
strong selection. Its proof is given in Section~\ref{proofpropo} and
an illustration is given in Figure~\ref{fig:asg1}.
\begin{figure}
  \hspace*{1cm} (A) \hspace{5cm} (B)
  \begin{center}
    {\includegraphics[width=5cm,trim=0 45 0 0,clip]{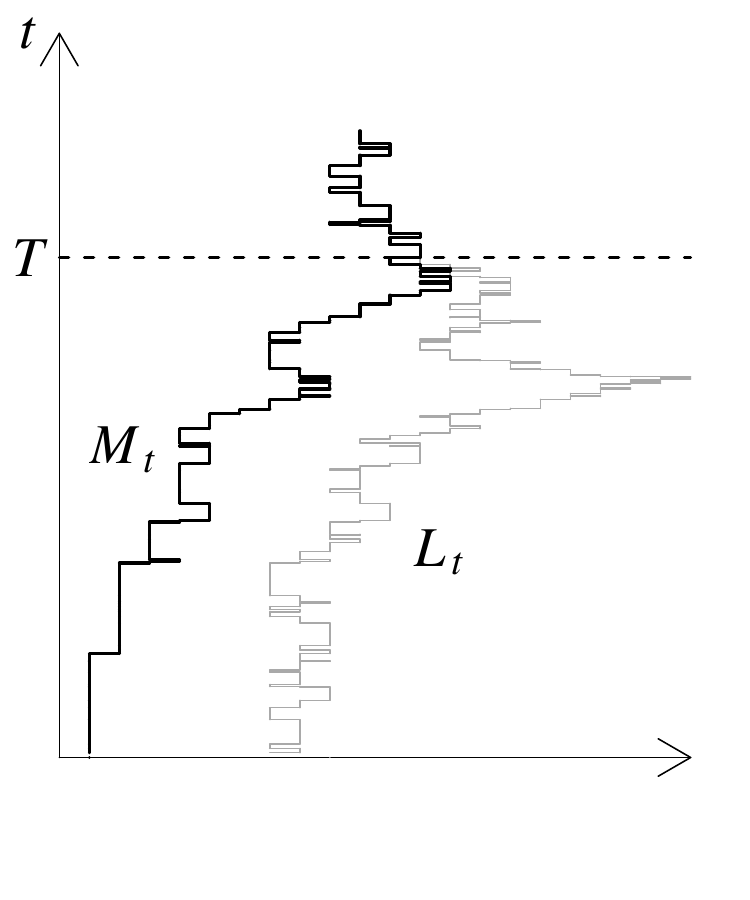}} \qquad
    {\includegraphics[width=5cm,trim=0 45 0 0,clip]{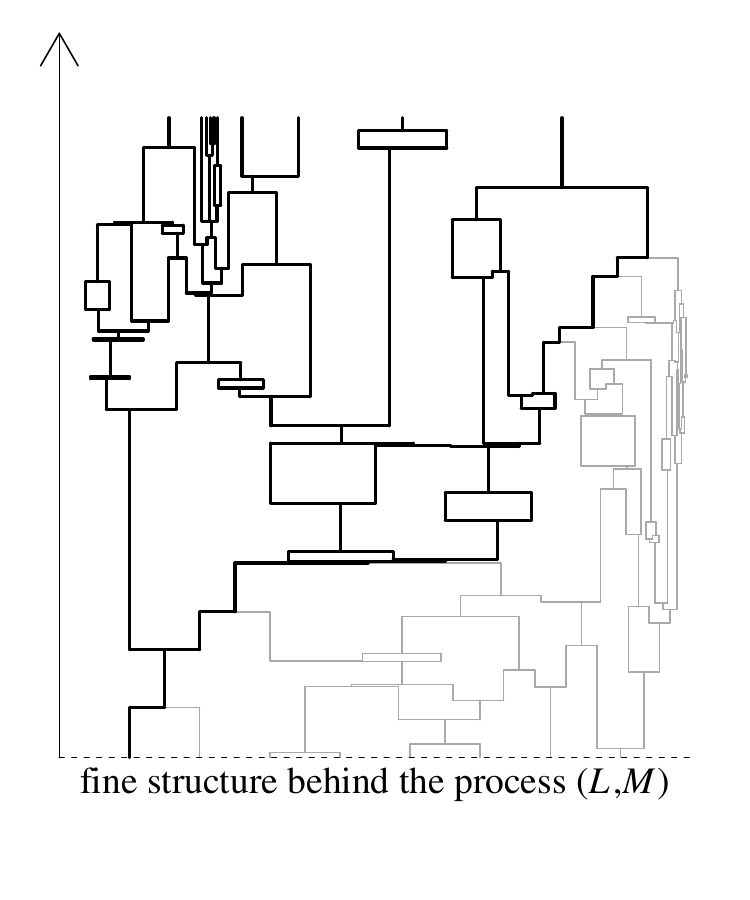}}
  \end{center}
  \caption{\label{fig:asg1}} (A) A realisation of the processes
  $({\underline M}_t)_{t\geq 0}$ and $(\underline L_t)_{t\geq 0}$ for
  the case of one colony. The joint distribution of these two
  processes is given in Proposition~\ref{P:main}. $T$ is the first
  time $t$ when ${\underline M}_t = {\underline L}_t$. (B) The pair
  $(\underline L, \underline M)$ has an underlying structure in terms
  of the particle system $\hat{\mathcal Y}$, where $\underline L$
  arises as the counting process of {\em all} particles in
  $\hat{\mathcal Y}$, and $({\underline M}_t)_{t\geq 0}$ is the
  counting process of the {\em marked particles} in
  $\hat{\mathcal Y}$.
   \end{figure}

\begin{proposition}[An approximation of $T_{\text{fix}}$]
  Let \label{P:main} $(\underline L_t, \underline M_t)$, $\underline L_t=
  (L_t^1,\dots,L_t^d)$,  $\underline M_t=
  (M_t^1,\dots,M_t^d)$, be defined as follows: For fixed $\iota \in
  \{1,\ldots, d\}$, let $\Pi_1,\dots,\Pi_d$ be independent and
  $\text{Poi}(2\alpha \rho_i)$-distributed, and put ${\underline L}_0=
  \underline \Pi + \underline e_\iota$, $\underline M_0 = \underline
  e_{{\iota}}$. The process $(\underline L, \underline M)$ jumps
 from $(\underline \ell, \underline m)$ to
  \begin{align*}
    (\underline \ell + \underline e_i, \underline m + \underline e_i)
    & \text{ at rate } \alpha m_i,\\
    (\underline \ell + \underline e_i, \underline m)
    & \text{ at rate } \alpha (\ell_i-m_i),\\
    (\underline \ell - \underline e_i, \underline m - \underline e_i)
    & \text{ at rate } \frac{1}{\rho_i}
    \binom{m_i}{2},\\
    (\underline \ell - \underline e_i, \underline m) & \text{ at rate
    } \frac{1}{\rho_i}{(\ell_i-m_i)} m_i
    + \frac{1}{\rho_i}\binom{\ell_i - m_i}{2},\\
    (\underline \ell - \underline e_i + \underline e_j, \underline m -
    \underline e_i + \underline e_j)& \text{ at rate }
    \mu a(i,j) m_i,\\
    (\underline \ell - \underline e_i + \underline e_j, \underline m)&
    \text{ at rate } \mu a(i,j) (\ell_i - m_i).
  \end{align*}
  Moreover, let 
  \begin{equation}\label{defT}
  T := \inf\{t\ge 0: \underline M_t =
  \underline L_t\},
  \end{equation}
  and let $T_{\rm{fix}}$ be the fixation time of $\mathcal X^\ast$,
  where $\mathcal X^\ast$ is a solution of the SDE \eqref{eq:SDE1} as
  described in Theorem~\ref{t.1}. Assume that the limiting
  distribution of $\frac \alpha{\log \alpha}T$ exists as
  $\alpha \to \infty$. Then
  \begin{align}\label{mainas}
    \lim_{\alpha\to\infty}\mathbf P_{\underline
      0}^\iota\Big(\frac{\alpha}{\log\alpha}T_{\rm{fix}} \leq
    t\Big) = \lim_{\alpha\to\infty} \mathbb
    P\Big(\frac{\alpha}{\log\alpha} T \leq t\Big),
  \end{align}
  in each continuity point of the limiting distribution
  function. Here, $\mu = \mu(\alpha)$ can depend on $\alpha$ in an
  arbitrary way.
\end{proposition}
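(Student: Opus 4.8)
The plan is to establish \eqref{mainas} by a chain of reductions, each of which was announced in the outline of Section~\ref{S30}, and then to identify the left-hand counting process $(\underline L,\underline M)$ of the time-reversed equilibrium ASG $\hat{\mathcal Y}$ as the object whose hitting time $T$ encodes fixation. Concretely, I would proceed as follows. First, using the conditional duality relationship of Lemma~\ref{dualcondfix} and the paintbox representation of Lemma~\ref{fixfreq}, rewrite, for the extremal entrance law $\mathbf P^\iota_{\underline 0}$, the event $\{T_{\rm fix}\le \tau\}$ (fixation of $\mathcal X^\ast$ by time $\tau$) in the language of the coupled particle systems $\mathcal Y$ and $\mathcal Z^{\underline\infty}$: fixation by time $\tau$ means precisely that the single marked particle in $\mathcal Y_\tau$ (arising in the limit $\varepsilon\to 0$ from marking a fraction $\varepsilon$ of colony $\iota$) is a potential ancestor of \emph{all} of the infinitely many particles in $\mathcal Z^{\underline\infty}_0$, as recorded in Corollary~\ref{corlimit}~c). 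Second, invoke Lemma~\ref{cdinf}, which says that frequencies within $\mathcal Y$ and within $\mathcal Z$ stay close: on the $\log(\alpha)/\alpha$-timescale this lets me discard $\mathcal Z$ entirely and reduce the fixation event to the event that the marking of one particle in colony $\iota$ at time $\tau$ has percolated upwards through $\mathcal Y$ to \emph{all} particles in $\mathcal Y_0$ — i.e.\ that \emph{every} particle in $\mathcal Y_0$ is a potential ancestor of the marked particle.

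**Next I would** pass to the time reversal $\hat{\mathcal Y}$ of the equilibrium ASG, using the time-reversal property to be derived in Section~\ref{timerev}: since $\mathcal Y_0$ is an equilibrium (Poisson with intensity $2\alpha\underline\rho$ conditioned non-zero, by Proposition~\ref{lemeq}), its reversal runs the same branching–coalescing–migrating dynamics but with migration rates $a(i,j)$ in place of $b(i,j)$, and — crucially, as emphasized in the outline — the conditioning $\{\mathcal Y_\tau\cap\mathcal A^{(\varepsilon\underline e_\iota)}_\tau\neq\varnothing\}$ translates into a change of the \emph{initial} condition of $\hat{\mathcal Y}$ (not its dynamics), which in the $\varepsilon\to 0$ limit produces the law $\underline\Pi+\underline e_\iota$ with one distinguished marked particle in colony $\iota$; this is exactly the initialization $\underline L_0 = \underline\Pi+\underline e_\iota$, $\underline M_0 = \underline e_\iota$ of Proposition~\ref{P:main}. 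Then $(\underline L_t)$ is the counting process of all particles in $\hat{\mathcal Y}_t$ and $(\underline M_t)$ the counting process of the marked ones; reading off the branching ($\alpha$), coalescence ($1/\rho_i$), and migration ($\mu a(i,j)$) rates and splitting each event according to whether it involves marked particles, unmarked particles, or a marked–unmarked pair yields precisely the six transition rates in the statement. The percolation-upward event — that, running backward, the mark reaches every particle of $\mathcal Y_0$ — becomes the event that at some time the marked count equals the total count in every colony simultaneously, i.e.\ $\underline M_t = \underline L_t$; since once $M^i = L^i$ in a colony the marks can only be lost by a coalescence that also removes an unmarked particle (and by the dynamics the full synchronization, once reached, is what matters for the $\tau\to\infty$ absorption), the relevant random time is $T = \inf\{t: \underline M_t = \underline L_t\}$ of \eqref{defT}. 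Assembling these identifications gives $\mathbf P^\iota_{\underline 0}(T_{\rm fix}\le\tau) = \mathbb P(\text{mark percolates to all of }\mathcal Y_0\text{ within }[0,\tau]) = \mathbb P(T\le\tau)$ up to errors that vanish on the stated timescale, and rescaling by $\alpha/\log\alpha$ and letting $\alpha\to\infty$ yields \eqref{mainas}, with the proviso on existence of the limit carried along throughout.

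**The main obstacle** I expect is controlling the two approximation steps quantitatively on the $\log(\alpha)/\alpha$-timescale: first, showing (via Lemma~\ref{cdinf}) that replacing the coupled pair $(\mathcal Y,\mathcal Z^{\underline\infty})$ by $\mathcal Y$ alone — i.e.\ replacing "ancestor of all of $\mathcal Z^{\underline\infty}_0$" by "ancestor of all of $\mathcal Y_0$" — introduces only an $o(1)$ error in the rescaled fixation-time distribution; and second, handling the discrepancy between the hitting time $T$ of $\{\underline M = \underline L\}$ and the true time at which the mark has percolated to \emph{all} of $\mathcal Y_0$ (these can differ because after a colony is fully marked a coalescence of two unmarked lineages, or a migration bringing in an unmarked lineage, can temporarily break the equality). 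The resolution is that on the relevant timescale the particle numbers $\underline L_t$ are of order $\alpha$ and the "near-fixation" excursions where $\underline M$ and $\underline L$ repeatedly meet and separate occur on a time-scale of order $1/\alpha$, hence negligible after multiplication by $\alpha/\log\alpha$; making this precise requires the branching-process comparisons (sub- and supercritical bounds) that are set up for the asymptotic analysis in Section~\ref{S:proofs2}, but for the purposes of Proposition~\ref{P:main} it suffices to invoke Lemma~\ref{cdinf} and the coupling of $\mathcal Y$ and $\mathcal Z$ through the common particle system $\mathcal A$, together with the observation that both sides of \eqref{mainas} are defined as limits and the claim is only their equality.
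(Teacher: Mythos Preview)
Your approach is essentially the paper's: reduce via Corollary~\ref{corlimit} and Lemma~\ref{cdinf} to percolation within $\mathcal Y$ alone, then time-reverse to $\hat{\mathcal Y}$ so that the conditioning becomes an initial condition $\underline\Pi+\underline e_\iota$ with one marked particle, and read off the transition rates of $(\underline L,\underline M)$.

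One clarification, however: your second ``main obstacle'' is not an obstacle at all. The identity
\[
\mathbb P(T\le\tau)=\mathbb P^{\iota,\tau}\bigl(\mathcal Y_0\subseteq\mathscr C_0(\{\bullet\})\bigr)
\]
holds \emph{exactly} for every $\tau>0$, not merely asymptotically. The reason is that the state $\{\underline M=\underline L\}$ is absorbing for the dynamics of $(\underline L,\underline M)$: once every particle in $\hat{\mathcal Y}_t$ is marked, there are no unmarked particles anywhere, and none of the six transitions can create one (branching, coalescence and migration of marked particles all yield marked particles). Hence $\{T\le\tau\}=\{\underline M_\tau=\underline L_\tau\}=\{\hat{\mathcal Y}_\tau\subseteq\widehat{\mathscr C}_\tau(\{\bullet\})\}$, which is precisely the event that the mark has percolated to all of $\mathcal Y_0$. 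Your worry about ``a migration bringing in an unmarked lineage'' cannot arise, because $\underline M=\underline L$ requires equality in \emph{every} colony simultaneously. So the only genuinely asymptotic step in the whole argument is the passage from $\mathcal Z^{\underline\infty}$ to $\mathcal Y$ via Lemma~\ref{cdinf}; the rest is an exact identification, and you should present it as such.
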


\begin{remark}[Existence of limiting distribution]
  Our proof of Theorem \ref{t.2} in Sec. \ref{S:proofs2} will reveal
  in particular that the limiting distribution of
  $\frac \alpha{\log \alpha}T$ exists as $\alpha \to \infty$, at least
  if $\mu = \mu(\alpha)$ falls in one of the three cases of
  Theorem~\ref{t.2}.
\end{remark}

\subsection{The structured ancestral selection graph as a particle system}\label{ASGdef}
We will define a Markov process ${\mathcal A}
= ({\mathcal A_r)_{r \ge 0}}$ that takes its values with probability~1 in the set of finite
subsets of $\{1,\ldots, d\} \times [0,1]$. We shall refer to the
elements of ${\mathcal A_r}$ as {\em
  particles}.  For each particle $(i, u) \in {\mathcal
  A_r}$, we call $i$ the particle's {\em location}
and $u$ the particle's {\em label}. Recall that we denote
the probability measure that underlies ${\mathcal
  A}$ by $\mathbb P$.  It will sometimes be
convenient to annotate the configuration of locations of the initial
state as a  superscript of $\mathcal A$ or $\mathcal
Z$. Specifically, for $\underline k = (k_1,\ldots, k_d)\in \mathbb
N_0^d$, we put
\begin{equation}\label{Zk}
  {\mathcal A}^{\underline k}_0= \bigcup_{i=1}^d\{(i,U_{ig}) :  1\le g\le k_i\}, 
\end{equation}
where the $U_{ig}$ are independent and uniformly distributed on
$[0,1]$.

~

We now specify the Markovian dynamics of ${\mathcal A}$ in terms of its
jump kernel $\mathscr D^{b}$ for some migration kernel
$\underline{\underline b}$ on $\{1,\dots,d\}$. Here we distinguish
three kinds of events (see Figure~\ref{fig:split} for an
illustration):
\begin{itemize}
\item[(1)] Coalescence: for all $i=1,\ldots,d$, every pair of
  particles in colony $i$ is replaced at rate $ 1/\rho_i$ by one
  particle in colony $i$ with a label that is uniformly distributed
  on $[0,1]$ and independent of everything else.
\item[(2)] Branching: for all $i=1,\ldots,d$, every particle in
  colony $i$ is replaced at rate $\alpha$ by two particles in colony
  $i$ with labels that are uniformly distributed on $[0,1]$ and
  independent of each other and of everything else.
\item[(3)] Migration: for all $i=1,\ldots,d$, every particle in
  colony $i$ is replaced at rate $\mu\, b(i,j)$, $j\in \{1,\ldots,
  d\}, j \neq i$, by a particle in colony $j$ with a label that is
  uniformly distributed on $[0,1]$ and independent of everything else.
\end{itemize} 
We will refer to ${\mathcal A} =
({\mathcal A_r})_{r\ge 0}$ also as the
{\em structured ancestral selection graph} (or ASG for short).  The
vector of {\em particle numbers} at time $r$ is $\underline K_r
=(K_r(1),\ldots, K_r(d))$ with
\begin{align}
  \label{eq:Kr}
  K_r(i) := \# \left({\mathcal A_r}\cap
      (\{i\}\times [0,1])\right), \,r \ge 0, \, i=1,\ldots, d.
\end{align}
${\underline K:=}(\underline K_r)_{r\geq 0}$ is a Markov
process whose jump rates (based on the migration kernel
$\underline{\underline b}$) are for $\underline k = (k_1,\dots,k_d)\in
\mathbb N_0^d\setminus \{\underline 0\}$ given by
\begin{equation} \label{eq:defqkl} 
  \begin{aligned}
    q^b_{\underline k, \underline k - \underline e_i} & :=
    q_{\underline k, \underline k - \underline e_i} :=
    \frac{1}{\rho_i}\binom {k_i} 2, \\ q^b_{\underline k, \underline
      k + \underline e_i} & := q_{\underline k, \underline k +
      \underline e_i} := \alpha k_i, \\q^b_{\underline k, \underline
      k -
      \underline e_i + \underline e_j}& :=\mu\, b(i,j) k_i, \\
    q^b_{\underline k, \underline \ell} & := q_{\underline k,
      \underline \ell} := 0 \quad \mbox{otherwise}.
  \end{aligned}
\end{equation}
{By analogy wit{h} 
  the notation ${\mathcal A^{\underline k}}$, we write
  $(\underline K^{\underline k}_r)_{r\geq 0}$ for the process with
  initial state $\underline k$. 
}

\begin{figure}
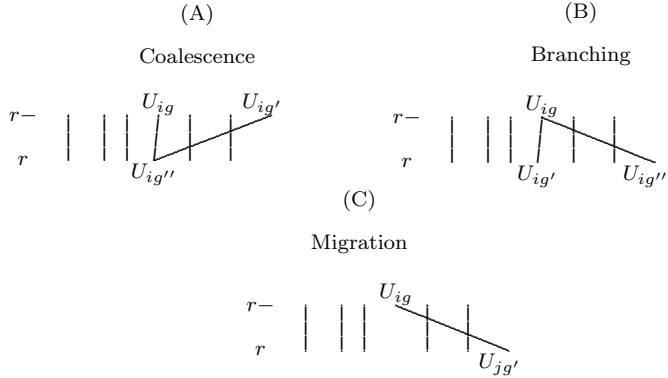

  \parbox{5cm}{\centering{(A)}\\[2ex]Coalescence\\[3ex]
    \beginpicture
    \setcoordinatesystem units <.6cm,.6cm>
    \setplotarea x from 0 to 6, y from -.5 to 1.5
    \put {$r-$} [cC] at 0 1
    \put {$r$} [cC] at 0 0
    \put {$U_{ig}$} [cC] at 3 1.25
    \put {$U_{ig'}$} [cC] at 5.3 1.25
    \put {$U_{ig''}$} [cC] at 2.9 -.3
    \plot 1 0 1 1 /
    \plot 1.8 0 1.8 1 /
    \plot 2.3 0 2.3 1 /
    \plot 3.7 0 3.7 1 /
    \plot 4.6 0 4.6 1 /
    \plot 2.9 0 5.5 1 /
    \plot 2.9 0 3 1 /
    \endpicture}
  \parbox{5cm}{\centering{(B)}\\[2ex]Branching\\[3ex]
    \beginpicture
    \setcoordinatesystem units <.6cm,.6cm>
    \setplotarea x from 0 to 6, y from -.5 to 1.5
    \put {$r-$} [cC] at 0 1
    \put {$r$} [cC] at 0 0
    \put {$U_{ig}$} [cC] at 3 1.25
    \put {$U_{ig''}$} [cC] at 5.3 -.3
    \put {$U_{ig'}$} [cC] at 2.9 -.3
    \plot 1 0 1 1 /
    \plot 1.8 0 1.8 1 /
    \plot 2.3 0 2.3 1 /
    \plot 3.7 0 3.7 1 /
    \plot 4.6 0 4.6 1 /
    \plot 3 1 5.5 0 /
    \plot 2.9 0 3 1 /
    \endpicture}
  \parbox{3cm}{\centering{(C)}\\[2ex]Migration\\[3ex]
    \beginpicture
    \setcoordinatesystem units <.6cm,.6cm>
    \setplotarea x from 0 to 6, y from -.5 to 1.5
    \put {$r-$} [cC] at 0 1
    \put {$r$} [cC] at 0 0
    \put {$U_{ig}$} [cC] at 3 1.25
    \put {$U_{jg'}$} [cC] at 5.3 -.3
    \plot 1 0 1 1 /
    \plot 1.8 0 1.8 1 /
    \plot 2.3 0 2.3 1 /
    \plot 3.7 0 3.7 1 /
    \plot 4.6 0 4.6 1 /
    \plot 3 1 5.5 0 /
    \endpicture}
  \caption{\label{fig:split} If a coalescing event (1), a branching
    event (2) or a migration event (3) occurs by time $r$, we connect
    the lines within the ASG according to the rules as given in
    Section \ref{ASGdef}. In all cases, labels $U_{ig}$ are uniformly
    distributed on $[0,1]$, and are updated upon any event for the
    affected lines.}
\end{figure}

\subsection{Equilibrium and time reversal of the ASG} 
\label{timerev}
\begin{proposition}[Equilibrium for $\mathscr
  D^b$]\mbox{}\label{lemeq}
  \begin{enumerate}
  \item The unique equilibrium distribution $ \pi$ for the dynamics
    $\mathscr D^b$ is the law $\pi$ of a Poisson point process on
    $\{1,\ldots, d\} \times [0,1]$ with intensity measure $2\alpha
    \underline \rho \otimes \lambda$, conditioned to be non-zero
    (where $\underline\rho = (\rho_1,\dots,\rho_d)$ and $\lambda$
    stands for the {Lebesgue
      measure} on $[0,1]$.)
  \item The jump kernel $\hat {\mathscr D}$ of the time reversal of
    ${\mathcal A}$ in its equilibrium $ \pi$ is again of the form
    (1),(2),(3), with the only difference that the migration rates
    $b(i,j)$ are replaced by the migration rates $a(i,j)$ as defined
    in \eqref{eq:aij}, i.e.\ $\hat {\mathscr D} = {\mathscr D^a}$.
  \end{enumerate}
\end{proposition}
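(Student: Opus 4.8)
The plan is to reduce both statements to the particle-number process $\underline K = (\underline K_r)_{r\ge 0}$ and then invoke the standard time-reversal formula for Markov jump processes. The key structural fact is that $\mathcal A$ is \emph{exchangeable} in the sense that, conditionally on the trajectory $(\underline K_r)_{r}$, the particles present at any fixed time carry i.i.d.\ uniform $[0,1]$ labels: this holds at time $0$ by \eqref{Zk} and is preserved by the dynamics, since every coalescence, branching and migration event re-labels the affected line(s) with a fresh independent uniform variable. Hence it suffices to (a) identify the unique stationary law of $\underline K$ and (b) compute the time-reversed rates of $\underline K$, and then to re-dress the resulting count process with i.i.d.\ uniform labels; part (a) will give exactly the Poisson point process on $\{1,\dots,d\}\times[0,1]$ with intensity $2\alpha\underline\rho\otimes\lambda$ conditioned on being non-zero, because such a configuration has $k_i$ uniform points in colony $i$ given the counts $\underline k$. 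I would also record two easy facts: the total number of particles is stochastically dominated by a Yule process with per-capita rate $\alpha$ (migration conserves it, coalescence decreases it), so $\underline K$ is non-explosive; and $\underline 0$ is never reached from a non-zero state (the total can only decrease by coalescence, which needs at least two particles in one colony), so $\underline K$ is an irreducible Markov chain on $\mathbb N_0^d\setminus\{\underline 0\}$.

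For part 1, I would split the generator of $\underline K$ as $\mathcal L = \mathcal L_{\mathrm{bc}} + \mathcal L_{\mathrm{mig}}$ (branching–coalescence, resp.\ migration) and check that the measure $\pi$ with $\pi(\underline k)\propto \prod_{i=1}^d (2\alpha\rho_i)^{k_i}/k_i!$ on $\mathbb N_0^d\setminus\{\underline 0\}$ is stationary for each summand. For $\mathcal L_{\mathrm{bc}}$ this is \emph{detailed balance}: with $\pi(\underline k+\underline e_i)/\pi(\underline k) = 2\alpha\rho_i/(k_i+1)$ the identity $\pi(\underline k)\,\alpha k_i = \pi(\underline k+\underline e_i)\,\tfrac1{\rho_i}\binom{k_i+1}{2}$ is immediate. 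For $\mathcal L_{\mathrm{mig}}$ detailed balance fails in general, but the full balance equation at $\underline k$, after inserting $\pi(\underline k+\underline e_p-\underline e_q)/\pi(\underline k) = \rho_p k_q/(\rho_q(k_p+1))$ and using \eqref{eq:aij}, reduces to $\sum_i k_i\sum_{j\ne i}b(i,j) = \sum_i k_i\sum_{j\ne i}a(i,j)$; this holds since $\sum_{j\ne i}a(i,j) = \tfrac1{\rho_i}\sum_{j\ne i}\rho_j b(j,i) = \sum_{j\ne i}b(i,j)$, which is exactly the statement that $\underline\rho$ is the equilibrium of $\underline{\underline b}$. (Equivalently: under migration alone the total is conserved and the particles run i.i.d.\ $\underline{\underline b}$-chains, for which the conditional law of $\pi$ given the total — a multinomial$(n;\underline\rho)$ — is stationary.) Uniqueness then follows: $\underline K$ is irreducible and admits the stationary probability $\pi$, hence is positive recurrent with $\pi$ as its unique stationary distribution.

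For part 2, I would use that a stationary Markov jump process with rates $q_{xy}$ and invariant law $\pi$ has time reversal with rates $\hat q_{xy} = \pi(y)q_{yx}/\pi(x)$. Since $\mathcal L_{\mathrm{bc}}$ is $\pi$-reversible, reversal leaves the branching and coalescence rates unchanged. For migration, $\hat q_{\underline k,\,\underline k-\underline e_i+\underline e_j} = \frac{\pi(\underline k-\underline e_i+\underline e_j)}{\pi(\underline k)}\,\mu\,b(j,i)(k_j+1) = \frac{k_i\rho_j}{\rho_i(k_j+1)}\,\mu\,b(j,i)(k_j+1) = \mu\,a(i,j)\,k_i$ by \eqref{eq:aij}, so the reversed migration is driven by $\underline{\underline a}$. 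Re-dressing the reversed count process with i.i.d.\ uniform labels (as in the first paragraph) yields a process of the form (1),(2),(3) with $b$ replaced by $a$, i.e.\ $\hat{\mathscr D} = \mathscr D^a$.

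The point that needs the most care — and the one I would treat as the main obstacle — is the label bookkeeping underlying the reduction of the \emph{labelled} system to $\underline K$ and, in particular, the claim that the time-reversed labelled process is again of the form (1),(2),(3) rather than having some more intricate label dynamics. Here the exchangeability property does the work: a forward branching event, read backwards, is two lines merging into a line that carries what in forward time was a fresh uniform label — i.e.\ a coalescence in $\hat{\mathscr D}$ that assigns a fresh uniform label — and symmetrically for forward coalescence and migration. Making this rigorous (e.g.\ by verifying that the generator of the labelled process, acting on functions symmetric in the labels, has the asserted reversal, or by a direct finite-dimensional-distribution computation) is the technical heart; the rate identities above are then routine.
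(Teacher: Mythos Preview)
Your proposal is correct and follows essentially the same route as the paper: reduce to the count process $\underline K$ via the uniform-label exchangeability, then verify the rate identities. The only organizational difference is that the paper checks the single relation $\pi_{\underline k}\, q^b_{\underline k,\underline\ell} = \pi_{\underline\ell}\, q^a_{\underline\ell,\underline k}$ in one stroke (detailed balance for branching/coalescence, and for migration precisely your part~2 computation), and then invokes the standard time-reversal criterion to obtain stationarity of $\pi$ and $\hat{\mathscr D}=\mathscr D^a$ simultaneously; you instead first establish stationarity (needing the full-balance argument for migration, which in the paper's packaging is hidden in the fact that $q^a$ and $q^b$ have equal row sums by the equilibrium property of $\underline\rho$) and then compute the reversal. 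Your added remarks on non-explosion, irreducibility, and the label bookkeeping under reversal are things the paper leaves implicit.
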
 

\begin{proof} We will prove the duality relation
  \begin{equation}\label{DDhat}
    \pi (d\underline z) \mathscr D^b(\underline z,d\underline z') =  \pi (d\underline z') 
    {\mathscr D}^{a}(\underline z',d\underline z),
  \end{equation}
  which by well known results about time reversal of Markov chains in
  equilibrium (see e.g.\ \cite{Norris1998}) proves both assertions of
  the Proposition at once.  Since, given the particles' locations,
  their labels are independent and uniformly distributed on $[0,1]$
  and since this is propagated in each of the (coalescence, branching
  and migration) events, it will be sufficient to consider the process
  $\underline K$.  Indeed, defining $q^a_{\underline
    k,\underline\ell}$ as in \eqref{eq:defqkl} and putting
  $$\pi_{(k_1,\dots, k_d)} = \frac{ e^{-2 \alpha}}{1-e^{-2\alpha}}
  \frac{(2\alpha)^{k_1+\cdots+ k_d}}{k_1! \cdots k_d!}  \rho_1^{k_1} \cdots
  \rho_d^{k_d}, \quad \underline k \in \mathbb N_0^d \setminus \{\underline 0\}, $$
  one readily checks for all $\underline k \in \mathbb N_0^d \setminus \{\underline 0\}$
  $$\pi_{\underline k}\cdot q_{\underline k, \underline k - \underline e_i}=\pi_{\underline k - \underline e_i} \cdot q_{\underline
    k - \underline e_i, \underline k}, \quad \pi_{\underline k} \cdot q^b_{\underline k, \underline k -
    \underline e_i + \underline e_j}= \pi_{\underline k - \underline e_i + \underline e_j}
  \cdot q^a_{\underline k - \underline e_i + \underline e_j,
    \underline k}.
  $$
  This can be summarized as
  $$\pi_{\underline k} q^b_{\underline k, \underline\ell} =
  \pi_{\underline\ell} q^a_{\underline\ell, \underline k}, \quad
  \underline k, \underline \ell \in \mathbb N_0^d \setminus
  \{\underline 0\},$$ which by definition of $\mathscr D^{b}$ and
  ${\mathscr D}^{a}$ lifts to \eqref{DDhat}, and thus proves the
  Proposition.
\end{proof}

\subsection{Genealogical relationships in the ASG}\label{genrel}
Thanks to the labelling of the particles it makes sense to speak about genealogical
relationships within ${\mathcal A}$. Doing so will facilitate the
interpretation of the duality relationships in the proofs of
Proposition~\ref{P:main} and Theorem~\ref{t.1}.

\begin{definition}[Connections between particles in ${\mathcal A}$\label{def:33}] Let
  ${\mathcal A}$ follow the dynamics $\mathscr D^b$ described in Section
  \ref{ASGdef}.  We say that a particle $(i',u')$ {\em replaces} a
  particle $(i,u)$ if either of the following relations hold:
  \begin{itemize}
  \item there is a migration event in which $(i,u)$ is replaced by $(i',u')$,
  \item there is a coalescence event for which $(i,u)$ belongs to the
    pair which is replaced by $(i',u')$,
  \item there is a branching event for which $(i',u')$ belongs to the
    pair which replaces $(i,u)$.
  \end{itemize}
  (Note that in the 2nd and 3rd case we have necessarily $i=i'$.) For
  $r,s \ge 0$ we say that two particles $(i,u) \in
  {\mathcal A_{r\wedge s}}$,
  $(i',u')\in {\mathcal A_{r\vee s}}$ are {\em connected} if either $(i,u)=(i',u')$ or there
  exists an $n \in \mathbb N$ and $(i_0,u_0), \ldots, (i_n,u_n)$ such
  that $(i_0,u_0)= (i,u)$, $(i_n,u_n)= (i',u')$, and
  $(i_{\ell},u_{\ell})$ replaces $(i_{\ell-1},u_{\ell-1})$ for
  $\ell=1,\ldots, n$. For any subset $\mathcal S_r$ of
  ${\mathcal A_r}$, let
  $$ \mathscr C_s(\mathcal S_r) := \bigcup_{(i,u) \in \mathcal S_r} \{(i',u') 
  \in {\mathcal A_s}: (i,u) \text{ and
  }(i',u') \text{ are connected}\}$$ be the collection of all those
  particles in ${\mathcal A_s}$ that are
  connected with at least one particle in $\mathcal S_r$.  We briefly
  call $\mathscr C_s(\mathcal S_r)$ {\em the subset of
    ${\mathcal A_s}$ that is connected
    with $\mathcal S_r$}.
\end{definition}

\subsection{Basic duality relationship} We recall a basic duality
result for the ASG for a structured population in Lemma~\ref{duality},
as can e.g.\ be found in \cite[equation (1.5)]{AthreyaSwart2005}. {For
  this purpose we use} a marking procedure of the process
${\mathcal A}$.  

\begin{definition}[A marking of particles\label{def:mark}]
    Let $\mathcal A$ follow the dynamics $\mathscr D^b$ described in
    Section \ref{ASGdef}, and fix a time $\tau >0$.  Take $\underline
    x = (x_1,\ldots, x_d) \in [0,1]^d$, and mark independently all
    particles in colony~$i$ at time $\tau$ with probability
    $x_i$. Denote by
    $$\mathcal A_\tau^{(\underline x)}:=\{(i,u)\in \mathcal A_\tau: (i,u) \mbox{ is marked }\}$$
    the collection of all marked particles in $\mathcal A_\tau$.
    \end{definition}
\begin{remark}[Connectedness and marks\label{rem:coma}]
  In the sequel we will use the following observation: for any subset
  $\mathcal S_0$ of $\mathcal A_0$, 
  $$\mathcal S_0 \cap
  {\mathscr C_0(  \mathcal A_\tau^{(\underline x)}) }=\varnothing \, \mbox{ if and
    only if }\, \mathscr C_\tau(\mathcal S_0) \cap {\mathcal
    A}_\tau^{(\underline x)}=\varnothing.$$ 
    For $\mathcal
  S_0={\mathcal A_0}$, we find that
  $\mathscr C_0(  \mathcal A_\tau^{(\underline x)}) =\varnothing$ if and only if
  $\mathcal A_\tau^{(\underline x)}=\varnothing$.
\\
  In words: no particle in $\mathcal S_0$ is marked (i.e. of
  ``beneficial type''), if and only if no potential ancestral particle
  of $\mathcal S_0$ is marked.
\end{remark}
\begin{lemma}[Basic duality relationship]\label{duality}
  Let $\mathcal X = (\underline X(t))_{t\geq 0}$ be the solution
  of~\eqref{eq:SDE0} with $\underline X(0) = \underline x\in [0,1]^d$,
  and let $\mathcal A$ follow the dynamics $\mathscr D^b$.  Then, for
  all $\underline k = (k_1,\ldots, k_d) \in \mathbb N_0^d$, we have,
  using the notation \eqref{prodnot} and \eqref{eq:Kr} 
  \begin{align} \label{eq:dual} \mathbf E_{\underline x} [(\underline
    1-\underline X(\tau))^{\underline k}] = \mathbb E [(\underline
    1-\underline x)^{\underline K^{\underline k}_\tau}] 
    =
    {\mathbb P({\mathcal A}_\tau^{(\underline x)} =
    \varnothing | \# {\mathcal A}_0 = \underline k)}.
  \end{align}
\end{lemma}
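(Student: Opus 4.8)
**

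The plan is to establish the chain of equalities in \eqref{eq:dual} piece by piece, the middle equality being the genuine duality computation and the outer two being essentially bookkeeping.

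\medskip

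\noindent\textbf{Step 1: The central equality $\mathbf E_{\underline x} [(\underline 1-\underline X(\tau))^{\underline k}] = \mathbb E [(\underline 1-\underline x)^{\underline K^{\underline k}_\tau}]$.} This is a standard moment-duality argument via generators. Denote by $L$ the generator of $\mathcal X$ acting on functions of $\underline x$ and by $\widehat L$ the generator of $\underline K$ acting on functions of $\underline k$; write $H(\underline x,\underline k):=(\underline 1-\underline x)^{\underline k}=\prod_i(1-x_i)^{k_i}$. I would verify the pointwise identity $L H(\cdot,\underline k)(\underline x)=\widehat L H(\underline x,\cdot)(\underline k)$ by matching terms. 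For the resampling part: $\tfrac12\tfrac{1}{\rho_i}x_i(1-x_i)\partial_{x_i}^2 H = \tfrac{1}{\rho_i}\binom{k_i}{2}\big((\underline 1-\underline x)^{\underline k-\underline e_i}-(\underline 1-\underline x)^{\underline k}\big)$, since $\partial_{x_i}^2(1-x_i)^{k_i}=k_i(k_i-1)(1-x_i)^{k_i-2}$ and $x_i(1-x_i)(1-x_i)^{k_i-2}=(1-x_i)^{k_i-1}-(1-x_i)^{k_i}$; this matches the coalescence rates in \eqref{eq:defqkl}. For selection: $\alpha x_i(1-x_i)\partial_{x_i}H=-\alpha k_i x_i(1-x_i)^{k_i}=\alpha k_i\big((\underline 1-\underline x)^{\underline k+\underline e_i}-(\underline 1-\underline x)^{\underline k}\big)$, matching the branching rate $\alpha k_i$. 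For migration: $\mu b(i,j)(x_j-x_i)\partial_{x_i}H$ expands, after using $x_j-x_i=(1-x_i)-(1-x_j)$ and factoring out $\prod_{\ell\neq i,j}(1-x_\ell)^{k_\ell}$, to $\mu b(i,j)k_i\big((\underline 1-\underline x)^{\underline k-\underline e_i+\underline e_j}-(\underline 1-\underline x)^{\underline k}\big)$, matching the migration rate $\mu b(i,j)k_i$ for the jump $\underline k\mapsto\underline k-\underline e_i+\underline e_j$. Since $H$ is bounded and both processes are well-behaved Markov jump/diffusion processes (uniqueness of the weak solution of \eqref{eq:SDE0} is quoted from \cite{D93}, and $\underline K$ is a pure jump process that is nonexplosive on any finite horizon because coalescence plus migration keep the total particle number from blowing up faster than a Yule process), the standard duality theorem (e.g.\ \cite{EthierKurtz1986}, or the reference \cite{AthreyaSwart2005}) gives $\mathbf E_{\underline x}[H(\underline X(\tau),\underline k)]=\mathbb E[H(\underline x,\underline K^{\underline k}_\tau)]$ for all $\tau\ge0$.

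\medskip

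\noindent\textbf{Step 2: The equality $\mathbb E [(\underline 1-\underline x)^{\underline K^{\underline k}_\tau}] = \mathbb P_{\underline k}(\mathcal A_\tau^{(\underline x)}=\varnothing)$.} This is the ``paintbox'' step and uses only the marking in Definition~\ref{def:mark}. Conditionally on the path of $\mathcal A$ up to time $\tau$, in particular on $\mathcal A_\tau$ (which contains $K_\tau(i)$ particles in colony $i$), the marks are assigned independently, a colony-$i$ particle being marked with probability $x_i$. Hence the conditional probability that \emph{no} particle is marked is $\prod_{i=1}^d(1-x_i)^{K_\tau^{\underline k}(i)}=(\underline 1-\underline x)^{\underline K^{\underline k}_\tau}$, using notation \eqref{prodnot}. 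Taking expectation over $\mathcal A$ gives the claim. (Here I use that $\underline K^{\underline k}_\tau$ is exactly the vector of particle counts of $\mathcal A^{\underline k}$ at time $\tau$, which is the content of \eqref{eq:Kr} and the line below \eqref{Zk}.)

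\medskip

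\noindent\textbf{Step 3: The equality $\mathbb P_{\underline k}(\mathcal A_\tau^{(\underline x)}=\varnothing)=\mathbb P_{\underline k}(\mathcal A_0^{(\underline x),\tau}=\varnothing)$.} This is immediate from Remark~\ref{rem:coma} applied with $\mathcal S_0=\mathcal A_0$: there we recorded that $\mathcal A_0^{(\underline x),\tau}=\varnothing$ if and only if $\mathcal A_\tau^{(\underline x)}=\varnothing$, because $\mathscr C_\tau(\mathcal A_0)=\mathcal A_\tau$ (every particle at time $\tau$ descends, in the genealogical sense of Definition~\ref{def:33}, from the initial configuration). Putting Steps 1--3 together yields \eqref{eq:dual}.

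\medskip

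\noindent\textbf{Main obstacle.} The only nontrivial point is Step~1, and within it the migration term: one must be careful that the generator of the \emph{forward} diffusion \eqref{eq:SDE0} uses the backward rates $b(i,j)$ and that after the algebraic rearrangement these same $b(i,j)$ reappear (not the $a(i,j)$) as the jump rates of $\underline K$ in \eqref{eq:defqkl} — i.e.\ the ASG here runs with the \emph{same} kernel $\underline{\underline b}$, and the switch to $\underline{\underline a}$ only occurs later under time reversal (Proposition~\ref{lemeq}). A secondary technical point is justifying the exchange of limits / applicability of the duality theorem, which requires noting nonexplosivity of $\underline K$ on $[0,\tau]$; this is routine since the branching events alone give at most a Yule-type growth, so all moments of $\sum_i K_r(i)$ are finite on compact time intervals, validating the generator computation and the duality relation.
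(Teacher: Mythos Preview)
Your proposal is correct and follows essentially the same approach as the paper: a generator computation matching the diffusion terms of \eqref{eq:SDE0} to the jump rates \eqref{eq:defqkl} (resampling $\leftrightarrow$ coalescence, selection $\leftrightarrow$ branching, migration $\leftrightarrow$ migration), followed by the marking interpretation for the second equality and Remark~\ref{rem:coma} for the third. The paper presents the same term-by-term identity and invokes \cite{EthierKurtz86}, Section~4.4, without elaborating on nonexplosivity; your added remarks on that point and on the careful bookkeeping of $b(i,j)$ versus $a(i,j)$ are accurate but not needed beyond what the paper does.
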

\begin{proof}
  The generator of the Markov process $\mathcal X$ is given by
  \begin{align*}
    G_{\mathcal X} f(\underline x) & = \tfrac 12 \sum_{j=1}^d
    \frac{1}{\rho_i}x_i(1-x_i) \frac{\partial f^2(\underline
      x)}{\partial^2 x_i} + \alpha \sum_{i=1}^d x_i (1-x_i)
    \frac{\partial f(\underline x)}{\partial x_i} \\ & \qquad \qquad
    \qquad \qquad \qquad \qquad \qquad + \mu \sum_{i,j=1}^d b(i,j)
    (x_j - x_i) \frac{\partial f(\underline x)}{\partial x_i}
  \end{align*}
  for functions $f\in\mathcal C^2([0,1]^d)$. Hence, { for
    $f_{\underline k}(\underline x) := (1-\underline x)^{\underline
      k}$ and $g_{\underline x}(\underline k) := (1-\underline
    x)^{\underline k}$,}
  \begin{align*}
    G_{\mathcal X} {f_{\underline k}(\underline x)}
     & = \sum_{i=1}^d
    \frac{1}{\rho_i} x_i \binom{k_i}{2} (1-\underline x)^{\underline k
      - \underline e_i} + \alpha \sum_{i=1}^d k_i (-x_i)
    (1-\underline x)^{\underline k} \\ & \qquad \qquad \qquad + \mu
    \sum_{i,j=1}^d b(i,j) k_i ((1-x_j) - (1-x_i)) (1-\underline
    x)^{\underline k_i - \underline e_i} \\ & = \sum_{i=1}^d
    \frac{1}{\rho_i}\binom{k_i}{2} \big((1-\underline x)^{\underline k
      - \underline e_i} - (1-\underline x)^{\underline k}\big) \\ &
    \qquad \qquad \qquad + \alpha \sum_{i=1}^d k_i \big(
    (1-\underline x)^{\underline k + \underline e_i} - (1-\underline
    x)^{\underline k}\big) \\ & \qquad \qquad \qquad + \mu
    \sum_{i,j=1}^d b(i,j) k_i \big((1-\underline x)^{\underline k -
      \underline e_i + \underline e_j} - (1-\underline x)^{\underline
      k}\big) \\ & = G_{\underline K} {g_{\underline
        x}(\underline k)},
  \end{align*}
  {where $G_{\underline K}$ is the generator of $\underline
    K$.}
  Now, the first equality in the duality relationship \eqref{eq:dual}
  is straightforward; see \cite[Section 4.4]{EthierKurtz86}.  The
  second equality in \eqref{eq:dual} is immediate from the definition
  of the marking procedure in Definition~\ref{def:mark}.
\end{proof}

\subsection {A paintbox representation of 
$\underline X(\tau)$
} 
\label{secpaintbox}
Our next aim is a {\em de Finetti--Kingman paintbox representation} of
the distribution of $\underline X(\tau) $ under $\mathbf P_{\underline
  x}$ in terms of the dual process $\underline K^{\underline \infty}$.
In order to achieve this, we need to be able to start the ASG with
infinitely many lines and define frequencies of marked particles.

\begin{remark}[Asymptotic frequencies]\label{deFinetti}\mbox{}
  \begin{enumerate}
  \item The process ${\mathcal A}$ can be
    started from
    \begin{equation}\label{Zinfty}
      {\mathcal A^{\underline \infty}_0}
     = \bigcup_{i=1}^d\{(i,U_{ig})\} :  1\le g< \infty\},
    \end{equation}
    where $(U_{ig})_{i=1,\dots,d, g=1,2,\dots}$ is an independent family
    of uniformly distributed random variables on $[0,1]$.  Indeed, the
    quadratic death rates {of} the process
    $\underline K$ (recall this process from~\eqref{eq:Kr}) ensure
    that the number of particles comes down from infinity.  In order
    to see this, consider the process $(K_r^1 + \cdots +
    K_r^d)_{r\geq 0}$ and note that given $K_r^1 +
    \cdots + K_r^d=k$ it increases at rate $\alpha k$ and its
    rate of decrease is minimal if colony $i$ carries $\rho_i k$
    lines, $i=1,\dots,d$, hence is bounded from below by
    \begin{align}
      \label{eq:below}
      \sum_{i=1}^{d} \frac{1}{\rho_i} {{k_i}
      \choose {2}} \geq \frac{1}{2} \Bigg( \sum_{i=1}^{d} {k_i}^2 -k
      \Bigg) \geq \frac{1}{2} \Bigg( \frac{1}{d} k^2 -k \Bigg) \geq
      \frac{k(k-d)}{2d},
    \end{align}
    where we have used the Cauchy--Schwarz inequality in the second
    "$\geq$". Using the same bounds as in Proposition~6.9 of
    \cite{DGP12}, we see that
    $\#\mathcal A^{\underline\infty}_\varepsilon =
    O(\varepsilon^{-1})$ as $\varepsilon\to 0$.
  \item For $i=1,\ldots, d$, let $(J_{i1},J_{i2},\ldots) := ((i,
    U_{i1}), (i, U_{i2}),\ldots)$ be the (numbered) collection of
    particles in ${\mathcal A_0^{\underline
        \infty}}$ that are
    located in colony $i$. Then by definition of the dynamics of
    ${\mathcal A^{\underline \infty}}$, the sequence
    \begin{equation}\label{seqex}
      (\mathbbm 1_{\{J_{i1}\in {\mathscr C_0(  \mathcal A_\tau^{(\underline x)})}\}}, 
      \mathbbm 1_{\{J_{i2}\in {\mathscr C_0(  \mathcal A_\tau^{(\underline x)})}\}},
      \ldots)
    \end{equation} 
    is exchangeable. Thus, by de~Finetti's theorem, the asymptotic
    frequency of ones in this sequence exists a.s., which we denote by
    $\underline F^{\underline x,\tau} = (F_i^{\underline
      x,\tau})_{i=1,\dots,d}$ with
    \begin{align}\label{eq:Fix}
      F_i^{\underline x, \tau} := \lim_{n\to\infty} \frac 1n
      \sum_{j=1}^n \mathbbm 1_{\{J_{ij}\in \mathscr C_0(\mathcal
      A_\tau^{(\underline x)})\}}
    \end{align}
  \end{enumerate}
\end{remark}

\begin{lemma}[Asymptotic frequencies and the solution
  of~\eqref{eq:SDE0}]\label{paintboxrep} For $\underline x \in
  [0,1]^d\setminus\{\underline 0\}$, let $\underline F^{\underline
    x,\tau}$ be as in \eqref{eq:Fix}. Then, for the solution
  $\underline X$ of ~\eqref{eq:SDE0} and $\tau\geq 0$,
  \begin{equation}\label{deFin}
    \mathbb P(\underline F^{\underline{x},\tau}\in (.))= 
    \mathbf P_{\underline x}(\underline X(\tau) \in (.)).
  \end{equation}
\end{lemma}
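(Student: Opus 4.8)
The plan is to identify both sides of \eqref{deFin} as the limit, as $\varepsilon \downarrow 0$, of the corresponding statement for the finite ASG started with $\lceil \cdot/\varepsilon\rceil$-many particles per colony, and then to pass to the limit using the moment duality of Lemma~\ref{duality}. More precisely, write $\underline n(\varepsilon) := (n_1(\varepsilon),\dots,n_d(\varepsilon))$ with $n_i(\varepsilon) \to \infty$ as $\varepsilon \downarrow 0$, and consider the marked finite ASG $\mathcal A^{\underline n(\varepsilon)}$. For this finite system, the empirical frequency of marked particles among the $n_i(\varepsilon)$ particles in colony $i$ at time $0$, call it $F_i^{\underline x,\tau,\varepsilon}$, has moments that can be computed by drawing a $\underline k$-subsample without replacement and applying Lemma~\ref{duality}: for $\underline k$ fixed and $\varepsilon$ small, the probability that a uniformly chosen $\underline k$-subsample of $\mathcal A_0^{\underline n(\varepsilon)}$ contains no marked particle equals, up to an $O(\varepsilon)$ hypergeometric-versus-binomial correction, $\mathbb P_{\underline k}(\mathcal A_0^{(\underline x),\tau} = \varnothing) = \mathbf E_{\underline x}[(\underline 1 - \underline X(\tau))^{\underline k}]$. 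Thus the falling-factorial moments of $\underline F^{\underline x,\tau,\varepsilon}$ converge to the ordinary moments $\mathbf E_{\underline x}[(\underline 1 - \underline X(\tau))^{\underline k}]$, which (since $[0,1]^d$ is compact, so the moment problem is determinate) pins down the limiting law as $\mathbf P_{\underline x}(\underline X(\tau) \in (\cdot))$.

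The second half of the argument is to show that this limit coincides with the law of the genuinely infinite object $\underline F^{\underline x,\tau}$ defined in \eqref{eq:Fix} under $\mathbb P_{\underline\infty}$. Here I would use the coming-down-from-infinity estimate already established in Remark~\ref{deFinetti}(1): there is a (random) time $\delta > 0$, which can be taken arbitrarily small with high probability, after which $\underline K^{\underline\infty}_\delta$ is finite a.s.; moreover $\underline K^{\underline\infty}_r$ is the monotone decreasing limit of $\underline K^{\underline n(\varepsilon)}_r$ as $\varepsilon \downarrow 0$, by the natural coupling in which the $\underline n(\varepsilon)$-system sits inside the $\underline\infty$-system. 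Running the marked dynamics from time $\tau$ backwards (i.e.\ the genealogical connections of Definition~\ref{def:33}) is continuous under this coupling for $\tau$ fixed, because only finitely many particles are present at all positive times; consequently $F_i^{\underline x,\tau,\varepsilon} \to F_i^{\underline x,\tau}$ a.s.\ along the coupling, and by bounded convergence the falling-factorial moments converge to the moments of $\underline F^{\underline x,\tau}$. Matching the two moment computations gives \eqref{deFin}.

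The technical heart — and the step I expect to be the main obstacle — is making precise the claim that the map from the finite ASG to the vector of de~Finetti frequencies is continuous under the increasing-in-$\varepsilon$ coupling, uniformly enough to interchange limits. The subtlety is that $\underline F^{\underline x,\tau}$ is defined as an almost-sure limit of empirical averages over infinitely many particles in $\mathcal A_0^{\underline\infty}$, and one must check that the labelling/connectedness bookkeeping of Sections~\ref{ASGdef}–\ref{genrel} is consistent when restricting the $\underline\infty$-system to its $\underline n(\varepsilon)$-subsystem, so that the restricted de~Finetti frequency is exactly $F_i^{\underline x,\tau,\varepsilon}$; the exchangeability in \eqref{seqex} is what guarantees these restricted frequencies form a consistent family whose limit is the frequency of the full system. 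Once that is in place, everything reduces to the determinacy of the moment problem on $[0,1]^d$ together with Lemma~\ref{duality}, which is routine. An alternative, slightly cleaner route avoiding the subsampling correction would be to observe directly that for the $\underline\infty$-system, $\mathbb E_{\underline\infty}[(\underline F^{\underline x,\tau})^{\underline k}]$ equals the probability that a size-$\underline k$ uniform subsample of $\mathcal A_0^{\underline\infty}$ (which, conditionally on the frequencies, picks each particle into the marked set independently with probability $F_i^{\underline x,\tau}$) is entirely unmarked, and this subsample is distributionally a $\underline k$-ASG run upwards, so the probability is $\mathbb P_{\underline k}(\mathcal A_0^{(\underline x),\tau} = \varnothing)$ directly; I would present this version.
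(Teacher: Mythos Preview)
Your ``alternative, slightly cleaner route'' at the end is in fact exactly the paper's proof, and it is the version you should present. The paper embeds $\mathcal A^{\underline k}_0$ into $\mathcal A^{\underline \infty}_0$ via the labelling, observes by exchangeability and de~Finetti that
\[
\mathbb E_{\underline \infty}\big[(\underline 1 - \underline F^{\underline x,\tau})^{\underline k}\big]
= \mathbb P_{\underline \infty}\big(\mathcal A^{\underline k}_0 \cap \mathcal A_0^{(\underline x),\tau} = \varnothing\big),
\]
then uses that the process $(\mathscr C_r(\mathcal A^{\underline k}_0))_{r\ge 0}$ under $\mathbb P_{\underline \infty}$ has the same law as $(\mathcal A^{\underline k}_r)_{r\ge 0}$ under $\mathbb P_{\underline k}$, so the right-hand side equals $\mathbb P_{\underline k}(\mathcal A_0^{(\underline x),\tau}=\varnothing)$, which is $\mathbf E_{\underline x}[(\underline 1 - \underline X(\tau))^{\underline k}]$ by Lemma~\ref{duality}. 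Moment determinacy on $[0,1]^d$ finishes. Note the small slip in your write-up: you want $\mathbb E_{\underline\infty}[(\underline 1 - \underline F^{\underline x,\tau})^{\underline k}]$, not $\mathbb E_{\underline\infty}[(\underline F^{\underline x,\tau})^{\underline k}]$, to match the ``entirely unmarked'' event.

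Your main route through finite $\underline n(\varepsilon)$-approximations, hypergeometric corrections, and coming-down-from-infinity couplings is not wrong, but it is substantially more work for no gain: the very continuity/consistency issue you flag as ``the main obstacle'' is entirely self-inflicted and evaporates once you work directly in the $\underline\infty$-system. The one substantive ingredient you need --- that a $\underline k$-subsample of $\mathcal A^{\underline\infty}_0$, followed via $\mathscr C_r$, is distributed as a standalone $\underline k$-ASG --- is exactly what makes the direct argument go through without any limiting procedure, and you already state it in your alternative. Drop the first two paragraphs and present the last one.
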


\begin{proof} From \eqref{Zinfty}, for all $\underline k \in \mathbb
  N_0^d \setminus \{\underline 0\}$, the process ${\mathcal
    A^{\underline k}}$ can be seen as
  embedded in ${\mathcal A^{\underline
      \infty}}$, if we write
  \begin{equation}\label{defupsilon}
    {\mathcal A_0^{\underline k}} 
    := \bigcup_{i=1}^d \{(i, U_{ig}): 
    1\le g \le k_i\} \subset {\mathcal A^{\underline \infty}_0}.
  \end{equation}
  By exchangeability of the sequence \eqref{seqex} and de Finetti's
  theorem (cf. Remark \ref{deFinetti}) we obtain
  \begin{equation}\label{freqmoments}
    \mathbb E [(\underline 1-\underline F^{\underline{x},\tau})^{\underline k}] = 
    \mathbb P({\mathcal A_0^{\underline k}}
    \cap {\mathscr C_0(  \mathcal A_\tau^{(\underline x)})}
    =\varnothing).
  \end{equation}
  Since the right-hand sides of \eqref{freqmoments} and \eqref{eq:dual} are equal, 
  we conclude from
  Lemma \ref{duality}  that
  $$\mathbb E [(\underline 1-\underline F^{\underline{x},\tau})^{\underline k}] 
  = \mathbf E_{\underline x} [(\underline 1-\underline
  X(\tau))^{\underline k}]$$ which shows \eqref{deFin}, since
  $\underline k \in \mathbb N_0^d \setminus \{\underline 0\}$ was
  arbitrary.
\end{proof}

Under $\mathbb P$ we have
$\underline F^{\underline{x},\tau} = \underline 1$ a.s.\ if and only
if for all $i=1,\ldots, d$ the sequences
$(\mathbbm 1_{\{J_{i1}\in {\mathscr C_0( \mathcal A_\tau^{(\underline
      x)})}\}}, \mathbbm 1_{\{J_{i2}\in {\mathscr C_0( \mathcal
    A_\tau^{(\underline x)})}\}},\ldots)$
consist of ones a.s. Hence the events
$\{\underline F^{\underline{x},\tau} = \underline 1\}$ and
$\{{\mathscr C_0( \mathcal A_\tau^{(\underline x)})}= \mathcal
A_0^{\underline \infty}\}$
are a.s. equal under $\mathbb P$. A fortiori we have
\begin{align}
\label{3.6}
  {\mathbf P_{\underline x}(T_{\rm fix} \le \tau)} = \mathbf P_{\underline x}(\underline X(\tau) =1)= \mathbb P
  ({\mathscr C_0( \mathcal A_\tau^{(\underline x)})}= {\mathcal
  A_0^{\underline\infty}}).
\end{align}
This equality allows to compute the probability of eventual fixation.

\begin{corollary}[Eventual fixation]
  \label{c.3.1}
  The probability for eventual fixation of the beneficial type,
\begin{align}\label{defh}
h(\underline x) := \mathbf P_{\underline x}(T_{\rm fix}< \infty)
\end{align}
  can be represented as (using the notation introduced in Lemma
  \ref{duality})
  \begin{align}\label{fixprob}
    h(\underline x) = 1- \mathbb E \left[(\underline 1 - \underline
      x)^{ \underline{\Psi}}\right],
  \end{align}
  where 
    {
  \begin{align}
    \label{eq:Pi}
    \text{$\underline \Psi\in \mathbb N_0^d \setminus \{\underline 0\}$
      is Poisson(2$\alpha \underline \rho$)-distributed,
      conditioned to be non-zero.}
  \end{align}
  }
  In other words, {$\underline \Psi$} {counts} the number of particles {in colonies $1,\ldots, d$}
 {of} the Poisson point process from Proposition~\ref{lemeq}.  In
  particular, $h(\underline x)$ is given by formula \eqref{fixprob0}.  
\end{corollary}

\begin{proof}
  Since $\mathbf P_{\underline x}(T_{\text{fix}}< \infty) = \lim_{\tau
    \to \infty} \mathbf P_{\underline x}(T_{\text{fix}}\le \tau)$, we
  can apply the representation \eqref{3.6}. We have that $\underline
  K^{\underline \infty}_\tau \xRightarrow{\tau \to
    \infty}\underline{\Psi}$, and the probability that $(\underline
  K^{\underline \infty}_r)_{r\geq 0}$ between times $r = 0$ and
  $r = \tau$ has a ``bottleneck'' at which the total number of
  lines equals 1 converges to one{; this was called the
    {\em ultimate ancestor} in \cite{KroneNeuhauser1997}}. Thus, as
  $\tau \to \infty$, the r.h.s.\ of \eqref{3.6} converges to the
  probability that at least one
  {particle} in the configuration
  $\underline{\Psi}$ is marked (provided all the
   {particles} at colony $i$ are
  marked independently with probability $x_i$). This latter
  probability equals the r.h.s.\ of \eqref{fixprob}. To evaluate this
  explicitly, we write for independent $L_i\sim \text{Poi}(2\alpha
  \rho_i)$, $i=1,\dots,d$ and $\underline L = (L_1,\dots,L_d)$, $L=L_1
  + \cdots + L_d$ (see Proposition~\ref{lemeq})
  \begin{align*}
    (1-e^{-2\alpha})h(\underline x) & = (1-e^{-2\alpha}) (1-\mathbb
    E[(1-\underline x)^{\underline{\Psi}}]) \\ & = (1-e^{-2\alpha}) -
    \mathbb E[(1-\underline x)^{\underline L}, \underline L \neq
    \underline 0] \\ & = (1-e^{-2\alpha}) - \mathbb E[(1-\underline
    x)^{\underline L}] + \mathbb P(L=0) \\ & = 1 - \prod_{i=1}^d
    \mathbb E[(1-x_i)^{L_i}] \\ & = 1 - \prod_{i=1}^d
    e^{-2\alpha\rho_i}e^{2\alpha\rho_i(1-x_i)} = 1 - e^{-2\alpha
      (x_1\rho_1 + \cdots + x_d\rho_d)},
  \end{align*}
  i.e.\ we have shown~\eqref{fixprob0}.
\end{proof}

\subsection{A duality conditioned on fixation}
\label{sec:YZ}
The next lemma is the analogue of Lemma \ref{duality} for the
conditioned diffusion $\mathcal X^\ast$ in place of $\mathcal X$.
Here, for $\underline k \in \mathbb N_0^d \setminus \{\underline 0\}$,
we will use the process $\mathcal A$, {\em which follows the dynamics
  $\mathscr D^b$ and has the initial state} $\mathcal Y_0 \cup
\mathcal Z^{\underline k}_0$, where $\mathcal Z^{\underline k}_0$ is
as in {the right hand side of} \eqref{Zk} and $\mathcal
Y_0$ is an equilibrium state for the dynamics $\mathscr D^b$ (as
described in Proposition~\ref{lemeq}) which is independent of
$\mathcal Z^{\underline k}_0$.  Note that this independence guarantees
that, with probability one, all labels are distinct, and hence
$\mathcal Y_0$ is a.s. disjoint from $\mathcal Z^{\underline k}_0$.

{ In terms of $\mathcal A$, we define two processes
  $\mathcal Y$ and $\mathcal Z = \mathcal Z^{\underline k}$, which
  follow the dynamics $\mathscr D^b$ with initial states $\mathcal
  Y_0$ and $\mathcal Z_0$, by setting 
  $$\mathcal Z_s = \mathscr
  C_s(\mathcal Z_0^{\underline k}) \text{ and }\mathcal Y_s = \mathscr
  C_s(\mathcal Y_0), s\geq 0.$$ We emphasize that $\mathcal
  Z^{\underline k} \stackrel d = \mathcal A^{\underline k}$ and
  $\mathcal Y \stackrel d = \mathcal A^{\underline \Psi}$ due to
  exchangeability of particles, hence $\mathcal Z^{\underline k}$ and
  $\mathcal Y$ constitute a coupling of $\mathcal A^{\underline k}$
  and $\mathcal A^{\underline \Psi}$ (with disjoint initial states).}

\begin{lemma}[Duality conditioned on fixation] \label{dualcondfix}
  Under $\mathbf P_{\underline x}$ let $\mathcal X^\ast = (\underline
  X^\ast(t))_{t\geq 0}$ \label{l.3.6} be the solution of
  \eqref{eq:SDE1}, started in $\underline X^\ast(0) = \underline x$.
  Under $\mathbb P$ and for $\underline k \in \mathbb N_0^d \setminus
  \{\underline 0\}$, let $\mathcal A${, $\mathcal Y$ and
    $\mathcal Z=\mathcal Z^{\underline k}$} be as described
  above. Then 
  \begin{align} \label{dualfix} \notag \mathbf E_{\underline
      x}[(\underline 1 - \underline X^\ast(\tau))^{\underline k}] &
    =\mathbb P(\mathcal Z_0^{\underline k} \cap \mathscr C_0({\mathcal
      A_\tau^{(\underline x)}}) =\varnothing \mid \mathcal Y_0 \cap
    {\mathscr C_0( \mathcal A_\tau^{(\underline x)})}\neq \varnothing)
    \\ & = \mathbbm P(\mathcal Z_\tau^{\underline k} \cap \mathcal
    A^{(\underline x)}_\tau { = \varnothing}\mid \mathcal Y_\tau \cap
    \mathcal A_\tau^{(\underline x)} \neq \varnothing).
  \end{align}
\end{lemma}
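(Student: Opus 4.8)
The plan is to derive \eqref{dualfix} from the basic duality \eqref{eq:dual} (Lemma~\ref{duality}) by conditioning on the event of eventual fixation and then passing to the $h$-transform description of $\mathcal X^\ast$. First I would record the two ingredients. On the diffusion side, $\mathcal X^\ast$ is by construction (see the discussion around \eqref{eq:SDE1}) the Doob $h$-transform of $\mathcal X$ with $h(\underline x) = \mathbf P_{\underline x}(T_{\rm fix}<\infty) = 1 - \mathbb E[(\underline 1 - \underline x)^{\underline\Psi}]$ from Corollary~\ref{c.3.1}, so that for any bounded measurable $\phi$,
\begin{align*}
  \mathbf E_{\underline x}[\phi(\underline X^\ast(\tau))]
  = \frac{1}{h(\underline x)}\,\mathbf E_{\underline x}\big[\phi(\underline X(\tau))\,h(\underline X(\tau))\big].
\end{align*}
Apply this with $\phi(\underline y) = (\underline 1 - \underline y)^{\underline k}$ and insert the representation $h(\underline y) = 1 - \mathbb E[(\underline 1 - \underline y)^{\underline\Psi}]$, where $\underline\Psi$ is Poi$(2\alpha\underline\rho)$ conditioned nonzero and independent of everything else; taking the expectation over $\underline\Psi$ inside, we get
\begin{align*}
  h(\underline x)\,\mathbf E_{\underline x}[(\underline 1 - \underline X^\ast(\tau))^{\underline k}]
  = \mathbf E_{\underline x}\big[(\underline 1 - \underline X(\tau))^{\underline k}\big]
  - \mathbb E_{\underline\Psi}\,\mathbf E_{\underline x}\big[(\underline 1 - \underline X(\tau))^{\underline k + \underline\Psi}\big].
\end{align*}

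Next I would interpret every term on the right via the ASG. Realize $\mathcal Z^{\underline k}$ and $\mathcal Y$ as the connected subfamilies $\mathscr C_s(\mathcal Z_0^{\underline k})$ and $\mathscr C_s(\mathcal Y_0)$ inside a single $\mathcal A$ with $\mathcal A_0 = \mathcal Y_0 \cup \mathcal Z_0^{\underline k}$, $\mathcal Y_0$ in equilibrium $\pi$ (so $\#$particles $= \underline\Psi$ in law by Proposition~\ref{lemeq}) and independent of $\mathcal Z_0^{\underline k}$. Mark the particles of $\mathcal A_\tau$ independently with probabilities $\underline x$. By Lemma~\ref{duality} applied to the sub-ASG started in $\underline k$ particles, $\mathbf E_{\underline x}[(\underline 1 - \underline X(\tau))^{\underline k}] = \mathbb P(\mathcal Z_\tau\cap \mathcal A_\tau^{(\underline x)} = \varnothing)$; by the same lemma applied to the sub-ASG started in $\underline k + \underline\Psi$ particles (i.e.\ the union), and using independence of $\underline\Psi$ from the marking, $\mathbb E_{\underline\Psi}\,\mathbf E_{\underline x}[(\underline 1 - \underline X(\tau))^{\underline k + \underline\Psi}] = \mathbb P\big((\mathcal Z_\tau \cup \mathcal Y_\tau)\cap \mathcal A_\tau^{(\underline x)} = \varnothing\big)$. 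Likewise $h(\underline x) = 1 - \mathbb E[(\underline 1 - \underline x)^{\underline\Psi}] = \mathbb P(\mathcal Y_\tau \cap \mathcal A_\tau^{(\underline x)} \neq \varnothing)$, using $\tau = 0$ in \eqref{eq:dual} together with exchangeability, or directly the $\tau$-version since $\mathscr C_\tau(\mathcal Y_0)$ is again in law an ASG started from $\underline\Psi$. Subtracting, the right-hand side is
\begin{align*}
  \mathbb P(\mathcal Z_\tau\cap \mathcal A_\tau^{(\underline x)} = \varnothing)
  - \mathbb P\big(\mathcal Z_\tau\cap \mathcal A_\tau^{(\underline x)} = \varnothing,\ \mathcal Y_\tau\cap \mathcal A_\tau^{(\underline x)} = \varnothing\big)
  = \mathbb P\big(\mathcal Z_\tau\cap \mathcal A_\tau^{(\underline x)} = \varnothing,\ \mathcal Y_\tau\cap \mathcal A_\tau^{(\underline x)} \neq \varnothing\big),
\end{align*}
which divided by $h(\underline x) = \mathbb P(\mathcal Y_\tau\cap\mathcal A_\tau^{(\underline x)}\neq\varnothing)$ is exactly the conditional probability $\mathbb P_{\underline k}(\mathcal Z_\tau\cap\mathcal A_\tau^{(\underline x)} = \varnothing \mid \mathcal Y_\tau\cap\mathcal A_\tau^{(\underline x)}\neq\varnothing)$, the second line of \eqref{dualfix}. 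The first line then follows from Remark~\ref{rem:coma} applied with $\mathcal S_0 = \mathcal Z_0^{\underline k}$ and with $\mathcal S_0 = \mathcal Y_0$, which translate $\mathcal Z_\tau\cap\mathcal A_\tau^{(\underline x)} = \varnothing$ into $\mathcal Z_0^{\underline k}\cap \mathcal A_0^{(\underline x),\tau} = \varnothing$ and $\mathcal Y_\tau\cap\mathcal A_\tau^{(\underline x)}\neq\varnothing$ into $\mathcal Y_0\cap\mathcal A_0^{(\underline x),\tau}\neq\varnothing$; note these are statements about the \emph{same} realization of $\mathcal A$, so the conditional law is preserved.

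I expect the main obstacle to be the bookkeeping around the $h$-transform rather than any single hard estimate: one must be careful that $h(\underline X(\tau))$ is genuinely the fixation probability \emph{from} $\underline X(\tau)$ (which uses the strong Markov property of $\mathcal X$ and the fact that $\{T_{\rm fix}<\infty\}$ is terminal), that exchangeability of the particle labels legitimately lets $\mathcal Z^{\underline k}$ and $\mathcal Y$ be realized as connected components of one $\mathcal A$ with the asserted marginal laws (this is the coupling highlighted just before the lemma), and that conditioning on $\underline X^\ast(0) = \underline x$ with $\underline x \neq \underline 0$ keeps $h(\underline x) > 0$ so the division is legitimate. A secondary subtlety is the interchange of the expectation over the independent Poisson configuration $\underline\Psi$ with the ASG probabilities; this is justified because $\underline\Psi$ enters only through the number of extra initial particles $\mathcal Y_0$, which is independent of both the marking and the branching-coalescing-migration dynamics, so Fubini applies termwise. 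Once these points are in place, the identity is a matter of matching the two combinatorial descriptions of the same event, exactly as in the proof of Lemma~\ref{duality}.
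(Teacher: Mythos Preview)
Your argument is correct and follows essentially the same route as the paper: both proofs start from the $h$-transform identity $\mathbf E_{\underline x}[(\underline 1-\underline X^\ast(\tau))^{\underline k}]=h(\underline x)^{-1}\mathbf E_{\underline x}[(\underline 1-\underline X(\tau))^{\underline k}h(\underline X(\tau))]$, insert the representation $h(\underline y)=1-\mathbb E[(\underline 1-\underline y)^{\underline\Psi}]$ from Corollary~\ref{c.3.1}, apply the basic duality of Lemma~\ref{duality} to each term, and then read off the conditional probability in the coupled ASG. The only cosmetic difference is that the paper first derives the time-$0$ formulation (the first line of \eqref{dualfix}) and deduces the time-$\tau$ version from Remark~\ref{rem:coma}, whereas you do it in the reverse order; the content is identical.
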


\begin{proof}
  In view of Remark~\ref{rem:coma} the fixation probability
  \eqref{defh} can be expressed as
\begin{align} \label{reprh} h(\underline x)= \mathbb P( 
      \mathcal Y_0 \cap 
{\mathscr C_0(  \mathcal A_\tau^{(\underline x)})}\neq \varnothing) =  \mathbb P(\mathcal
    Y_\tau \cap \mathcal A_\tau^{(\underline x)} \neq \varnothing), \quad \tau \ge 0.
\end{align}
The second equality in \eqref{dualfix} follows right away from Remark
\ref{rem:coma}. To show the first equality, we set out by writing the
Markovian semigroup of $\underline X^\ast$ as the $h$-transform of the
semigroup of $\underline X$,
\begin{align}
  \mathbf E_{\underline x}[(\underline 1 - \underline
  X^\ast({\tau}))^{\underline k}]  =
  \frac{\mathbf E_{\underline x}[(\underline 1 - \underline
  X({\tau}))^{\underline k},
  T_{\text{fix}}<\infty]}{\mathbf P_{\underline x}(T_{\text{fix}}<\infty)} = \frac{\mathbf E_{\underline x}[(\underline 1 -
  \underline X({\tau}))^{\underline k} \,
  h(\underline X({\tau})]}{h(\underline
  x)}. \label{fracdual}
\end{align}
The numerator of the right-hand side of \eqref{fracdual} equals
\begin{align}\notag
  \mathbf E_{\underline x}[(\underline 1 - \underline
  X(\tau))^{\underline k} \, &(1-\mathbb E[(\underline 1-\underline
                               X(\tau))^{\underline{\Psi}}])] 
  \\ &= \mathbf E_{\underline x}
       [(\underline 1 - \underline X(\tau))^{\underline k}] - \mathbb E
       \otimes\mathbf E_{\underline x}[(\underline 1-\underline
       X(\tau))^{\underline{\Psi}+\underline k}]. \label{continue}
\end{align}
Writing $(\underline
K^{\underline k}_r)_{r\ge 0}$, $(\underline N_r)_{r \geq 0}$ and
$(\underline G_r)_{r\ge 0}$ for the processes of particle numbers in
$\mathcal Z^{\underline k}$, $\mathcal Y$ and $\mathcal A$,
respectively, we obtain from the duality
relation~\eqref{eq:dual} that 
\begin{align*}
  \mathbb E \otimes\mathbf E_{\underline x}[(\underline 1-\underline
    X(\tau))^{\underline{\Psi}+\underline k}]&= \mathbbm E [ \mathbbm E
    [ \mathbf E_{\underline x} (\underline 1 - \underline
    X(\tau))^{\underline N_0 + \underline k} | \underline N_0]]] \\ &=
    \mathbbm E [ \mathbbm E [
    (\underline 1 - \underline x)^{\underline G_\tau} | \underline
    G_0]] = \mathbb E[(\underline 1-\underline
    x)^{\underline G_\tau}].
\end{align*}
Hence, again by the duality relation~\eqref{eq:dual} and by
Remark~\ref{rem:coma}, the right hand side of \eqref{continue} is
equal to 
\begin{align*}\mathbbm E [(1- \underline
  x)^{K^{\underline k}_\tau}]-\mathbb E[(\underline 1-\underline
  x)^{ \underline G_\tau}] 
  & = \mathbb P(\mathcal Z^{\underline  k}_\tau\cap
    \mathcal A_\tau^{(\underline x)}=\varnothing)  -
    \mathbb P({\mathcal A_\tau^{(\underline x)}}=\varnothing)\\
  & = \mathbb P(\mathcal Z^{\underline k}_0
    \cap {\mathscr C_0( \mathcal
    A_\tau^{(\underline x)})}
    =\varnothing)- \mathbb P((\mathcal
    Z^{\underline k}_0\cup \mathcal Y_0)
    \cap
    {\mathscr C_0(  \mathcal A_\tau^{(\underline x)})}=\varnothing)\\
  &= \mathbb P(\{\mathcal Z_0^{\underline
    k}\cap {\mathscr C_0( \mathcal
    A_\tau^{(\underline x)})}
    =\varnothing\} \cap \{\mathcal Y_0\cap
    {\mathscr C_0( \mathcal
    A_\tau^{(\underline x)})}\neq
    \varnothing \}).
\end{align*}
Combining this with \eqref{fracdual}, \eqref{continue} and
\eqref{reprh}, we arrive at the first equality in \eqref{dualfix}.
\end{proof}

\subsection{A paintbox representation
for $\underline  X^\ast(\tau)$}
\label{secpaint} 
We now lift the assertion from
Lemma~\ref{paintboxrep} about the paintbox construction of $\underline
X(\tau)$ to $\underline X^\ast(\tau)$.  For this, let the process
$\mathcal A$ follow the dynamics
$\mathscr D^b$ and have the initial state $\mathcal Y_0 \cup \mathcal Z^{\underline
  \infty}_0$, where
$\mathcal Z^{\underline \infty}_0$ is as in \eqref{Zinfty} and
$\mathcal Y_0$ is an equilibrium state for the
dynamics $\mathscr D^{{b}}$ (as described in
Proposition~\ref{lemeq}) which is independent of $\mathcal
Z^{\underline \infty}_0$. Recall from~\eqref{eq:Fix}. the definition of the asymptotic frequencies
$\underline F^{\underline x, \tau} = (F_i^{\underline x,
  \tau})_{i=1,\dots,d}$ of ${\mathscr C_0(  \mathcal A_\tau^{(\underline x)})}$ within $\mathcal A_0$.

\begin{lemma}[A paintbox for $\underline
  X^\ast(\tau)$] \label{fixfreq} 
  Under $\mathbf P_{\underline x}$ let
  $\mathcal X^\ast = (\underline X^\ast(t))_{t\geq 0}$ be the solution
  of \eqref{eq:SDE1}, started in $\underline X^\ast(0) = \underline
  x$. Under $\mathbb P$, let the process $\mathcal
  A$ and the frequencies $\underline F^{\underline
    x, \tau}$ be as above. Then,
  \begin{equation}\label{deFincond}
    \mathbf P_{\underline x}(\underline X^\ast(\tau) \in (.)) =
    \mathbb P(\underline F^{\underline x,\tau}\in (.) 
    \mid \mathcal Y_\tau
    \cap \mathcal A_\tau^{(\underline x)}\neq \varnothing).
  \end{equation}
\end{lemma}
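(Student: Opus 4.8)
The plan is to derive \eqref{deFincond} from the conditional duality relationship \eqref{dualfix} of Lemma~\ref{dualcondfix} in essentially the same way that Lemma~\ref{paintboxrep} was derived from the basic duality \eqref{eq:dual}: namely, by identifying mixed moments of the frequency vector $\underline F^{\underline x,\tau}$ conditioned on the event $\{\mathcal Y_\tau \cap \mathcal A_\tau^{(\underline x)}\neq\varnothing\}$ with the conditioned moments of $\underline X^\ast(\tau)$, and then invoking the fact that a $[0,1]^d$-valued distribution is determined by its mixed moments. The starting point is the embedding: for $\underline k\in\mathbb N_0^d\setminus\{\underline 0\}$ we realize $\mathcal Z^{\underline k}_0$ as a subset of $\mathcal Z^{\underline\infty}_0$ (as in \eqref{defupsilon}), so that $\mathcal A$ with initial state $\mathcal Y_0\cup\mathcal Z^{\underline\infty}_0$ contains, for every $\underline k$, a copy of the process from Lemma~\ref{dualcondfix}. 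By exchangeability of the particles in colony $i$ and de Finetti's theorem (as in Remark~\ref{deFinetti} and \eqref{freqmoments}), conditionally on $\mathcal A$ the probability that a size-$\underline k$ subsample of $\mathcal Z^{\underline\infty}_0$ avoids ${\mathcal A_0^{(\underline x),\tau}}$ equals $(\underline 1-\underline F^{\underline x,\tau})^{\underline k}$; taking expectations,
\begin{equation*}
  \mathbb E_{\underline\infty}\big[(\underline 1-\underline F^{\underline x,\tau})^{\underline k}
  \,\mathbf 1_{\{\mathcal Y_\tau\cap\mathcal A_\tau^{(\underline x)}\neq\varnothing\}}\big]
  = \mathbb P\big(\mathcal Z_0^{\underline k}\cap{\mathcal A_0^{(\underline x),\tau}}=\varnothing,\;
  \mathcal Y_0\cap{\mathcal A_0^{(\underline x),\tau}}\neq\varnothing\big).
\end{equation*}

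Dividing both sides by $\mathbb P(\mathcal Y_0\cap{\mathcal A_0^{(\underline x),\tau}}\neq\varnothing)$ and using Lemma~\ref{dualcondfix} (whose right-hand side is precisely this ratio, rewritten via Remark~\ref{rem:coma}), one gets
\begin{equation*}
  \mathbb E\big[(\underline 1-\underline F^{\underline x,\tau})^{\underline k}
  \mid \mathcal Y_\tau\cap\mathcal A_\tau^{(\underline x)}\neq\varnothing\big]
  = \mathbf E_{\underline x}\big[(\underline 1-\underline X^\ast(\tau))^{\underline k}\big],
  \qquad \underline k\in\mathbb N_0^d\setminus\{\underline 0\}.
\end{equation*}
The case $\underline k=\underline 0$ is trivial (both sides equal $1$). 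Since the mixed moments $\mathbb E[(\underline 1-\underline Y)^{\underline k}]$, $\underline k\in\mathbb N_0^d$, of a $[0,1]^d$-valued random vector $\underline Y$ determine its law (Weierstrass approximation / the fact that polynomials in $1-y_1,\dots,1-y_d$ are dense in $C([0,1]^d)$), the equality of all mixed moments yields the equality of the conditional law of $\underline F^{\underline x,\tau}$ and the law of $\underline X^\ast(\tau)$ under $\mathbf P_{\underline x}$, which is \eqref{deFincond}.

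The one point that needs a little care — and which I expect to be the main obstacle — is the interchange of the de Finetti limit with the conditioning, i.e.\ justifying that $\mathbb E[(\underline 1-\underline F^{\underline x,\tau})^{\underline k}\mid A] = \lim_n \mathbb E[\tfrac1{n^{|\underline k|}}\#\{\text{subsamples avoiding }{\mathcal A_0^{(\underline x),\tau}}\}\mid A]$ with $A = \{\mathcal Y_\tau\cap\mathcal A_\tau^{(\underline x)}\neq\varnothing\}$. This is immediate from bounded convergence once one knows $\mathbb P(A)>0$, which holds because $A \supseteq \{$the single equilibrium particle configuration $\mathcal Y_0$ is marked at time $\tau\}$ has positive probability for $\underline x\neq\underline 0$ (alternatively, $\mathbb P(A)=h(\underline x)>0$ by Corollary~\ref{c.3.1}, exactly as used at the end of the proof of Lemma~\ref{dualcondfix}). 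The only other bookkeeping issue is to make sure the embedding of $\mathcal Z^{\underline k}$ into $\mathcal Z^{\underline\infty}$ is compatible with the event $A$, which only involves $\mathcal Y$ and the marking, and is therefore unaffected by which finite subsample of $\mathcal Z^{\underline\infty}_0$ we single out; this is precisely the content of the remark that $(\mathscr C_r(\mathcal Z_0^{\underline k}))_{r\ge0}$ under $\mathbb P_{\underline\infty}$ has the same law as $(\mathcal Z_r^{\underline k})_{r\ge0}$ under $\mathbb P_{\underline k}$, used jointly with the (unchanged) $\mathcal Y$-component.
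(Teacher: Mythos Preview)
Your proposal is correct and follows essentially the same route as the paper: use exchangeability of the $\mathcal Z^{\underline\infty}_0$-particles under the conditional measure to identify $\mathbb E[(\underline 1-\underline F^{\underline x,\tau})^{\underline k}\mid \mathcal Y_\tau\cap\mathcal A_\tau^{(\underline x)}\neq\varnothing]$ with the conditional probability appearing in Lemma~\ref{dualcondfix}, then conclude by moment determinacy. The paper's proof is just a terser version of yours, omitting the discussion of $\mathbb P(A)>0$ and the embedding bookkeeping you spelled out.
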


\begin{proof} 
  { For $\mathcal Z_0^{\underline\infty} = \{J_{ig}:=(i, U_{ig}):
    i=1,...,d, g=1,2,...\}$, w}e observe that the
  sequence~\eqref{seqex} is exchangeable under the measure $\mathbb
  P(\cdot \mid \mathcal Y_\tau \cap \mathcal A_\tau^{(\underline
    x)}\neq \varnothing)$, which guarantees the a.s.\ existence of
  $\underline F^{\underline x,\tau}$. We now parallel the argument in
  the proof of Lemma \ref{paintboxrep}:\\
  For each $\underline k \in \mathbb N_0^d \setminus \{\underline
  0\}$, with $\mathcal Z^{\underline k}_0$ is as in { the right hand
    side of} \eqref{Zk}, we have because of exchangeability
  \begin{equation*}
    \mathbb E[(\underline 1-\underline F^{\underline x, 
      \tau})^{\underline k}\mid \mathcal Y_\tau
    \cap \mathcal A_\tau^{(\underline x)}\neq \varnothing]   =\mathbb P(\mathcal Z_0^{\underline k} 
    \cap {\mathscr C_0(  \mathcal A_\tau^{(\underline x)})}=\varnothing \mid \mathcal Y_\tau
    \cap \mathcal A_\tau^{(\underline x)}\neq \varnothing).  
  \end{equation*}
  Combining this with Lemma \ref{dualcondfix}, and since $\underline
  k$ was arbitrary, we obtain the assertion.
\end{proof}

We are interested in the limit of \eqref{deFincond} as $\underline x =
\underline x(\varepsilon) \sim \varepsilon \underline e_\iota$ and
$\varepsilon \to 0$ for a fixed $\iota \in \{1,\ldots, d\}$. 
For brevity we write
\begin{equation}\label{Px}
  \mathbb P^{\underline x,\tau}(\cdot) := \mathbb P(\cdot \mid \mathcal Y_\tau
    \cap \mathcal A_\tau^{(\underline x)}\neq \varnothing).
\end{equation}

\begin{remark}[Limit of small frequencies]\label{Poisrem}
  Let $\mathscr P$ be a Poisson point process on $\{1,\ldots, d\}
  \times [0,1]$ with intensity measure $2\alpha \underline \rho
  \otimes \lambda$. (Compare with Proposition~\ref{lemeq}.) For $\iota
  \in \{1,\ldots, d\}$ and $\underline x = \underline x(\varepsilon) =
  \varepsilon \underline e_\iota$, the conditional distribution of
  $(\mathcal Y_\tau, \mathcal Y_\tau\cap\mathcal A_\tau^{(\underline
    x(\varepsilon))})$ given $\{\mathcal Y_\tau\cap\mathcal
  A_\tau^{(\underline x(\varepsilon))}\neq \varnothing\}$ converges,
  as $\varepsilon \to 0$, to the distribution of $(\mathscr
  P^{(\iota)}, \{(\iota,U)\})$, with $\mathscr P^{(\iota)}:= \mathscr
  P\cup \{(\iota,U)\}$, and $U$ independent of $\mathscr P$
  { and uniformly distributed on $[0,1]$}. In particular,
  under the limit of $\mathbb P^{\varepsilon \underline e_\iota,\tau}$
  as $\varepsilon \to 0$, with probability $1$ there is exactly one
  marked particle in $\mathcal Y_\tau$, with the location of this
  particle being $\iota$. Indeed, (using the same notation as in the
  proof of Corollary~\ref{c.3.1}),
  \begin{equation}
    \label{eq:9810}
    \begin{aligned}
      \lim_{\varepsilon\to 0} \mathbb P^{\underline
        x(\varepsilon),\tau}(\#(\mathcal
      Y_\tau {\cap (\{\iota\} \times[0,1])})=k) & = \lim_{\varepsilon\to
        0}\frac{e^{-2\alpha \rho_\iota} (2\alpha\rho_\iota)^k
        (1-(1-\varepsilon)^k)/ k!}{1 - \sum_{\ell=0}^\infty
        e^{-2\alpha \rho_\iota}(2\alpha \rho_\iota)^\ell
        (1-\varepsilon)^\ell / \ell!} \\ & = \lim_{\varepsilon\to
        0}\frac{e^{-2\alpha \rho_\iota} (2\alpha\rho_\iota)^k
        k\varepsilon/ k!}{1 - e^{- 2\alpha\rho_\iota \varepsilon}} \\ & =
      e^{-2\alpha \rho_\iota}
      \frac{(2\alpha\rho_\iota)^{k-1}}{(k-1)!},
    \end{aligned}
  \end{equation}
  the weight of a Poisson($2\alpha\rho_\iota$)-distribution at
  $k-1${. A similar calculation shows that this also equals
    the limit of $\mathbb P^{\underline
      x(\varepsilon),\tau}(\#(\mathcal Y_\tau {\cap (\{\iota\} \times[0,1])} )=k,
    \#\mathcal Y_\tau \cap \mathcal A_\tau^{(\underline x)} = 1)$ as
    $\varepsilon\to 0$}, explaining the additional particle
  $(\iota,U)$ in $\mathcal Y_\tau$ under $\mathbb P^{\iota,\tau}$.
\end{remark}

\begin{definition}[The process $\mathcal A$ with small marking probability\label{def:small}]
\textcolor{white}{ttt}
\begin{itemize}
\item The weak limit of
  $\mathbb P^{\varepsilon \underline e_\iota, \tau}(\mathcal A \in
  (.))$
  as $\varepsilon\to 0$ will be denoted by
  $$\mathbb P^{\iota, \tau}(\mathcal A \in (.)).$$ From the previous
  remark, under $\mathbb P^{\iota,\tau}$, there is a.s.  exactly one
  marked particle in $\mathcal Y_\tau$, with the location of this
  particle being $\iota$.  This particle will be denoted by~$\bullet$.
\item  For each colony $i$, consider the
  configuration $\mathscr C_0(\{\bullet\}) \cap \mathcal
  Z_0^{\underline \infty} {\cap ( \{i\} \times[0,1])}$, i.e.\ the
  configuration of all particles in $\mathcal Z_0^{\underline \infty}$
   that are located in colony $i$ and are connected with
  $\{\bullet\}$. By exchangeablity, the relative frequency of this
  configuration within $\mathcal Z_0^{\underline \infty}{\cap(
  \{i\} \times[0,1])}$ exists, $i=1,\ldots,d$, cf.\ Remark 3.7.2. As
  before, we denote the vector of these relative
  frequencies by $\underline F^{\iota,\tau} :=
  (F^{\iota,\tau}_1,\dots,F^{\iota,\tau}_d)$.
  \end{itemize}
\end{definition}

\begin{corollary}[Entrance laws for \eqref{eq:SDE1}]
  {\label{corlimit} There exists a weak limit of the
    distribution of $\mathcal X^\ast$ under $\mathbf P_{\varepsilon
      \underline e_\iota}$ as $\varepsilon\to 0$, which we denote by
    $\mathbf P_{\underline 0}^\iota(\mathcal X^\ast \in (.))$.}  In
  particular, $((\underline X^\ast_t)_{t>0}, \mathbf P_{\underline
    0}^\iota)$ defines an entrance law from $\underline 0$ for the
  dynamics~\eqref{eq:SDE1}.
\end{corollary}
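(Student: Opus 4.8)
The plan is to read off the one-dimensional marginal laws of $\mathcal X^\ast$ from the paintbox representation of Lemma~\ref{fixfreq}, to pass to the limit $\varepsilon\to0$ inside that representation with the help of Remark~\ref{Poisrem}, and finally to lift the convergence from fixed-time marginals to the full path law by using the Markov property of the solutions of \eqref{eq:SDE1} furnished by Theorem~\ref{t.1}\,a). Fix $\tau>0$. Lemma~\ref{fixfreq} together with the abbreviation \eqref{Px} gives
\[
  \mathbf P_{\varepsilon\underline e_\iota}\big(\underline X^\ast(\tau)\in(\cdot)\big)=\mathbb P^{\varepsilon\underline e_\iota,\tau}\big(\underline F^{\varepsilon\underline e_\iota,\tau}\in(\cdot)\big),
\]
and by Remark~\ref{Poisrem} and the definition of $\mathbb P^{\iota,\tau}$ and of $\underline F^{\iota,\tau}$ preceding the statement, as $\varepsilon\to0$ the marked ASG converges weakly to the one governed by $\mathbb P^{\iota,\tau}$, under which the (a.s.\ finitely many) particles of $\mathcal Z^{\underline\infty}_\tau$ carry no mark while exactly one particle $\bullet$ of $\mathcal Y_\tau$, located in colony $\iota$, is marked. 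To turn this into convergence of the laws of the frequency vectors I would compare mixed moments: the identity established in the proof of Lemma~\ref{fixfreq},
\[
  \mathbb E^{\underline x,\tau}\big[(\underline1-\underline F^{\underline x,\tau})^{\underline k}\big]=\mathbb P^{\underline x,\tau}\big(\mathcal Z^{\underline k}_0\cap\mathcal A^{(\underline x),\tau}_0=\varnothing\big),\qquad\underline k\in\mathbb N_0^d,
\]
exhibits each mixed moment as the probability of an event that depends only on the marking of the finitely many particles of $\mathcal A_\tau$ and on their backward genealogy through the a.s.\ finitely many coalescence, branching and migration events in $[0,\tau]$; this is an a.s.-continuous functional of the marked ASG, so these probabilities converge, as $\varepsilon\to0$, to $\mathbb P^{\iota,\tau}(\mathcal Z^{\underline k}_0\cap\mathscr C_0(\{\bullet\})=\varnothing)=\mathbb E^{\iota,\tau}[(\underline1-\underline F^{\iota,\tau})^{\underline k}]$. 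As $[0,1]^d$ is compact, convergence of all moments yields $\mathbf P_{\varepsilon\underline e_\iota}(\underline X^\ast(\tau)\in(\cdot))\xRightarrow{\varepsilon\to0}\nu_\tau:=\mathbb P^{\iota,\tau}(\underline F^{\iota,\tau}\in(\cdot))$.

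Next I would promote this to the level of processes. By Theorem~\ref{t.1}\,a), for every initial distribution on $[0,1]^d\setminus\{\underline0\}$ the system \eqref{eq:SDE1} has a unique weak solution, and its transition semigroup $(P^\ast_t)_{t>0}$ depends weakly continuously on the initial law --- this can be checked directly, since the coefficients of \eqref{eq:SDE1} are locally Lipschitz and bounded on sets bounded away from $\underline0$, or deduced from the $h$-transform relation to the Feller process $\mathcal X$. Applying the Markov property at a fixed $\delta>0$, the law of $\mathcal X^\ast$ restricted to $[\delta,\infty)$ under $\mathbf P_{\varepsilon\underline e_\iota}$ is the image of the law of $\underline X^\ast(\delta)$ under the weakly continuous map $\nu\mapsto\mathbf P_\nu(\mathcal X^\ast\in(\cdot))$; combining this with the marginal convergence just obtained and a tightness estimate in $C([\delta,T],[0,1]^d)$ uniform in $\varepsilon$, one gets $\mathbf P_{\varepsilon\underline e_\iota}(\mathcal X^\ast|_{[\delta,\infty)}\in(\cdot))\Rightarrow\mathbf P_{\nu_\delta}(\mathcal X^\ast\in(\cdot))$. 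The limits for different $\delta>0$ are consistent, so Kolmogorov's extension theorem furnishes a law $\mathbf P^\iota_{\underline0}$ on $C((0,\infty),[0,1]^d)$ under which, for every $\delta>0$, $(\underline X^\ast(t))_{t\ge\delta}$ is the unique solution of \eqref{eq:SDE1} started from $\nu_\delta$; this is the asserted weak limit.

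It then remains to check the two defining properties of an entrance law from $\underline0$. The property $\underline X^\ast(t)\neq\underline0$ for all $t>0$ holds because $\nu_\delta$ does not charge $\underline0$ (indeed $\mathscr C_0(\{\bullet\})$ is a.s.\ non-empty, so $\underline F^{\iota,\tau}\neq\underline0$ a.s.) and because $\underline0$ is inaccessible for \eqref{eq:SDE1}, the $\coth$-drift diverging as $\sum_j\rho_jX^\ast_j\to0$. The property $\underline X^\ast(t)\to\underline0$ in probability as $t\to0$ is equivalent to $\nu_t\Rightarrow\delta_{\underline0}$, i.e.\ to $\mathbb E^{\iota,\tau}[(\underline1-\underline F^{\iota,\tau})^{\underline k}]=\mathbb P^{\iota,\tau}(\mathcal Z^{\underline k}_0\cap\mathscr C_0(\{\bullet\})=\varnothing)\to1$ as $\tau\to0$ for every $\underline k$; this follows because $\mathcal Z^{\underline\infty}$ comes down from infinity (Remark~\ref{deFinetti}), so within the short time $\tau$ the backward genealogy of the single particle $\bullet$ reaches only a negligible portion of $\mathcal Z^{\underline\infty}_0$ --- in particular a fixed finite set $\mathcal Z^{\underline k}_0$ of time-$0$ particles is, with probability tending to $1$, not among the ancestors of $\bullet$. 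Together with the previous step this shows that $((\underline X^\ast(t))_{t>0},\mathbf P^\iota_{\underline0})$ is an entrance law from $\underline0$ for \eqref{eq:SDE1}.

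I expect the main obstacle to be the uniform-in-$\varepsilon$ tightness on path space used in the second step. Because the $\coth$-drift in \eqref{eq:SDE1} is singular at $\underline0$, one has to exclude, uniformly in $\varepsilon$, wild excursions of $\mathcal X^\ast$ toward $\underline0$ on time intervals bounded away from $0$; a natural route is to control, under the $h$-transform and with the explicit fixation-probability weight $h$ from Corollary~\ref{c.3.1}, the quantity $\sum_i\rho_iX_i$, which is a bounded martingale for \eqref{eq:SDE0}. The remaining ingredients --- weak continuity of $\nu\mapsto\mathbf P_\nu(\mathcal X^\ast\in(\cdot))$ and the a.s.-continuity of the frequency functional $\underline F^{\underline x,\tau}$ under the ASG-convergence of Remark~\ref{Poisrem} --- are routine once the moment identity above is available.
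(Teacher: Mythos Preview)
Your approach is essentially the same as the paper's: the paper's proof is the single display
\[
\mathbf P_{\varepsilon \underline e_\iota}(\underline X^\ast(\tau)\in(\cdot)) = \mathbb P^{\varepsilon \underline e_\iota,\tau}(\underline F^{\varepsilon \underline e_\iota,\tau}\in(\cdot)) \xrightarrow{\varepsilon\to 0} \mathbb P^{\iota,\tau}(\underline F^{\iota,\tau}\in(\cdot)),
\]
justified by Lemma~\ref{fixfreq} and Remark~\ref{Poisrem}, followed by the sentence ``together with the Markov property, this shows that there exists a weak limit of the distribution of $\mathcal X^\ast$'' --- exactly your first two steps, but stated without the detail you supply (the moment argument for marginal convergence, the Feller-type continuity of $\nu\mapsto \mathbf P_\nu(\mathcal X^\ast\in(\cdot))$, and the explicit verification of the two entrance-law properties).

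One remark on the obstacle you flag: the path-space tightness on $[\delta,\infty)$ is less delicate than you suggest. Since the limiting time-$\delta$ law $\nu_\delta$ does not charge $\underline 0$ (as you observe), and since for initial laws supported on $[0,1]^d\setminus\{\underline 0\}$ the SDE \eqref{eq:SDE1} is well-posed with $\underline 0$ inaccessible, it suffices to show that the laws $\mathbf P_{\varepsilon\underline e_\iota}(\underline X^\ast(\delta)\in(\cdot))$ put uniformly small mass near $\underline 0$. But this is immediate from the paintbox side: $\underline F^{\iota,\delta}\neq\underline 0$ a.s., and the convergence of moments you established is already convergence in law on the compact $[0,1]^d$, hence uniform integrability of the approach to $\underline 0$ is automatic. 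In other words, the singularity of the $\coth$-drift never actually enters the tightness argument once you restrict to $t\ge\delta$; the paper's silence on this point is therefore harmless rather than a gap, and your more careful route reaches the same conclusion.
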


\begin{proof}
  As a consequence of \eqref{deFincond} and the reasoning in Remark
  \ref{Poisrem} we have
  \begin{align}\label{entra}
    \mathbf P_{\varepsilon \underline e_\iota}(\underline X^\ast(\tau)
    \in (.)) = \mathbb P^{\varepsilon \underline
      e_\iota,\tau}(\underline F^{\varepsilon \underline e_\iota,
      \tau}\in (.))  \xrightarrow{\varepsilon\to 0} \mathbb
    P^{\iota,\tau}(\underline F^{\iota, \tau}\in (.)).
  \end{align}
  Together with the Markov property, this shows that there exists a
  weak limit of the distribution of $\mathcal X^\ast$ under $\mathbf
  P_{\varepsilon \underline e_\iota}$ as $\varepsilon\to 0$. Hence the
  result follows.
\end{proof}

\begin{remark}[Asymptotic expected frequencies\label{rem:asexfr}]
  For the asymptotic frequencies, we have that $\rho_\iota \mathbb
  E^{\iota,t}[F_j^{\iota,t}]/t \xrightarrow{t\to 0}\delta_{\iota
    j}$. Indeed, $\mathbb E^{\iota,t}[F_j^{\iota,t}]$ is the
  probability that a particle from $\mathcal
  Z^{{\underline \infty}}_0$ located on colony $j$
  belongs to $\mathscr C_0(\{\bullet\})$. In order for the particle to
  be connected to~$\bullet$, a coalescence event within time $t$ must
  occur. For small $t$, and up to linear order in $t$, this can only
  happen if the particle is located on the same colony, i.e.\
  $\iota=j$. In this case, since the coalescence rate on colony
  $\iota$ is $1/\rho_\iota$, the result follows.
\end{remark}

\begin{remark}[A correction of \cite{PfaffelhuberPokalyuk2013}]
  In \cite{PfaffelhuberPokalyuk2013} the case of a single colony
  ($d=1$) is studied. Lemma 2.4 of \cite{PfaffelhuberPokalyuk2013}
  can be seen as an analogue of our Lemma~\ref{fixfreq} (together with
  Remark~\ref{Poisrem}). However, Lemma 2.4 of
  \cite{PfaffelhuberPokalyuk2013} neglects the effect which the
  conditioning on the event $\{\mathcal Y_\tau
    \cap \mathcal A_\tau^{(\underline x)}\neq \varnothing\}$ has on the
  distribution of \, $\underline \Psi$, and works right away with the
  time-reversal of $\mathcal Y$ in equilibrium. Our
  analysis shows that, in spite of this imprecision, the conclusions
  of the main results of \cite{PfaffelhuberPokalyuk2013} remain true.
\end{remark}
As a consequence of \eqref{deFincond} and \eqref{entra} we obtain
\begin{equation} \label{fixdual} \mathbf P^\iota_{\underline 0}(T_{\rm
    fix} \le \tau)= \mathbf P^\iota_{\underline 0}(\underline
  X^\ast(\tau)= 1) = \mathbb P^{\iota,\tau}(\mathcal Z_0^{\underline
    \infty} \subseteq \mathscr C_{0}(\{\bullet\}).
\end{equation}

\subsection{Proof of Proposition \ref{P:main}}\label{proofpropo}
From \eqref{fixdual} we now derive a result on how to approximate
$T_{\rm fix}$ as $\alpha \to \infty$. The idea is that in this limit
the time which it takes for $\mathcal Z^{\underline \infty}$ to
coalesce with $\mathcal Y$ is essentially negligible on the
$\frac{\log \alpha}{\alpha}$-timescale. This is captured by the
following lemma, whose proof we defer to the end of the section.  
{
\begin{lemma}[Approximating $T_{\rm fix}$\label{cdinf}] For
  $\delta, \tau>0$, let
  $\delta_\alpha := \delta \frac{\log \alpha}{\alpha}$ and
  $\tau_\alpha := \tau \frac{\log \alpha}{\alpha}$, and let $\bullet$ be as in Definition \ref{def:small}. Then,
  \begin{align}\label{lower}
    \mathbb P^{\iota,\tau_\alpha}(
    \mathcal Z_0^{\underline\infty} \subseteq \mathscr C_0(\{\bullet\}))
    \leq \mathbb P^{\iota,\tau_\alpha}(\mathcal Y_0
    \subseteq \mathscr C_0(\{\bullet\})) \mbox{ for all } \alpha > 0,
  \end{align}
  \begin{align}\label{upper}
    \liminf_{\alpha\to\infty} \mathbb P^{\iota,\tau_\alpha}(\mathcal Y_0
    \subseteq \mathscr C_0(\{\bullet\}))  \le   \liminf_{\alpha\to\infty} \mathbb P^{\iota,\tau_\alpha +
    \delta_\alpha}(\mathcal Z_0^{\underline\infty} \subseteq \mathscr
    C_0 (\{\bullet\})).
  \end{align}
\end{lemma}

\noindent
The next corollary follows by combining \eqref{fixdual}
and Lemma \ref{cdinf}.
\begin{corollary}\label{corfix}
  For $\alpha >0$ let $S_\alpha$ be a random variable with
  distribution function
  $\tau \mapsto \mathbb P^{\iota,\tau_\alpha}( \mathcal Y_{0}
  \subseteq \mathscr C_{0}(\{\bullet\}))$,
  where $\tau_\alpha= \tau \frac{\log \alpha}{\alpha}$. (In the
  subsequent proof of Proposition \ref{P:main} we will see that
  $S_\alpha$ has a natural interpretation as the rescaled fixation
  time of $\bullet$ in the time-reversal of $\mathcal Y$.)  If
  $S_\alpha$ converges in distribution as $\alpha \to \infty$ and if
  $\tau$ is a point of continuity of the limiting distribution
  function, we have
  \begin{equation}\label{fixasympt}
    \lim_{\alpha \to \infty} \mathbf P^\iota_{\underline 0}(T_{\rm fix} \le \tau_\alpha) = \lim_{\alpha \to \infty} 
    \mathbb P^{\iota,\tau_\alpha}( \mathcal Y_{0} \subseteq \mathscr C_{0}(\{\bullet\})).
  \end{equation}
\end{corollary}

\begin{proof}
  The limit in the right hand side exists by assumption. If
  $\tau-\delta$ is a continuity point of the limiting distribution
  function $F$, then we have by \eqref{upper} (with $\tau$ replaced
  by $\tau-\delta$) and again abbreviating
  $\delta_\alpha = \delta \frac{\log \alpha}{\alpha}$
  $$\lim_{\alpha \to \infty}\mathbb P^{\iota,\tau_\alpha - \delta_\alpha}( \mathcal Y_{0} \subseteq \mathscr C_{0}(\{\bullet\})) 
  \leq \liminf_{\alpha\to\infty}\mathbb P^{\iota,\tau_\alpha}(
  \mathcal Z_0^{\underline\infty} \subseteq \mathscr
  C_0(\{\bullet\})).$$
  Hence, working along a sequence of continuity points $\tau-\delta$
  of $F$ with $\delta \downarrow 0$, we have 
  \begin{align*}
    \lim_{\alpha \to \infty} \mathbb P^{\iota,\tau_\alpha}( \mathcal Y_{0} \subseteq \mathscr C_{0}(\{\bullet\})) 
    &= \lim_{\delta\to 0} \lim_{\alpha \to \infty}\mathbb P^{\iota,\tau_\alpha - \delta_\alpha}( \mathcal Y_{0} \subseteq \mathscr C_{0}(\{\bullet\})) 
    \\ & \leq  \liminf_{\alpha\to\infty}\mathbb P^{\iota,\tau_\alpha}(
         \mathcal Z_0^{\underline\infty} \subseteq \mathscr C_0(\{\bullet\})) =         \liminf_{\alpha \to \infty} \mathbf P^\iota_{\underline 0}(T_{\rm fix} \le \tau_\alpha) 
    \\ & \leq         \limsup_{\alpha \to \infty} \mathbf P^\iota_{\underline 0}(T_{\rm fix} \le \tau_\alpha) = \limsup_{\alpha\to\infty}\mathbb P^{\iota,\tau_\alpha}(
         \mathcal Z_0^{\underline\infty} \subseteq \mathscr C_0(\{\bullet\})) 
    \\ & \leq 
         \lim_{\alpha \to \infty} \mathbb P^{\iota,\tau_\alpha}( \mathcal Y_{0} \subseteq \mathscr C_{0}(\{\bullet\})).
  \end{align*}
\end{proof}

The preceding corollary shows that, in order to study the asymptotic distribution of $T_{\rm fix}$ on the $\frac{\log \alpha}{\alpha}$-timescale, it suffices to analyse
the asymptotics of the percolation probabilities of the marked particles {\em within the equilibrium ASG} under the (conditional) probability
$\mathbb P^{\iota,\tau_\alpha}$. As already explained in Sec. \ref{S30}, the link to  Proposition \ref{P:main} is now given by a time reversal argument.

\begin{proof}[Proof of Proposition~\ref{P:main}]
  In view of \eqref{fixasympt}, we are done once we show that, for $\tau > 0$,
  \begin{align}\label{eq:P311}
    \mathbb P\big(T\leq \tau\big) = \mathbb P^{\iota,\tau}(\mathscr
    C_{0}(\{\bullet\}) \supseteq  \mathcal Y_{0}),
  \end{align}
  where $T$ is defined in \eqref{defT}. For this, we bring the time reversal
  $\widehat{\mathcal Y}$ of $\mathcal Y
    = ( \mathcal Y_r)_{0\le r\le \tau}$ into play,
  which is defined by
  $$ \widehat {  \mathcal Y}_s:= 
  {\mathcal Y}_{\tau-s}, \quad 0\le s\le \tau.$$ {
    Analogously, we define $\widehat{\mathscr C}_s(\{\bullet\}) :=
    {\mathscr C}_{\tau-s}(\{\bullet\}).$ Then, our assertion
    \eqref{eq:P311} is equivalent to
    \begin{align}\label{eq:P3111}
      \mathbb P\big(T\leq \tau\big) = \mathbb P^{\iota,\tau}(\widehat{\mathscr
        C}_{\tau}(\{\bullet\}) \supseteq  \widehat{\mathcal Y}_{\tau}).      
    \end{align}} We recall that the dynamics of $\widehat { \mathcal
    Y} $ in equilibrium is given by $\mathscr D^a$; see
  Proposition~\ref{lemeq}. While for $ \mathcal Y$ the conditioning
  \eqref{Px} is at the {\em terminal} time $\tau$ (and thus modifies
  the dynamics $\mathscr D$), the same conditioning expressed for
  $\widehat { \mathcal Y}$ happens at the {\em initial} time $0$ and
  thus does effect the initial state but not the dynamics $\mathscr
  D^a$. The distribution of $\hat {\mathcal Y}_0$ which results from
  this conditioning is described in Remark \ref{Poisrem}. Thus we
  observe that under $\mathbb P^{\iota,\tau}$, the time-reversed
  process $\widehat { \mathcal Y}$ follows the dynamics $\mathscr D^a$
  and has initial state $\widehat{ \mathcal Y}_0= \mathscr
  P^{(\iota)}= \mathscr P\cup \{(\iota,U)\}$, with $\mathscr P$
  defined in Remark \ref{Poisrem} and $\bullet:=(\iota,U)$.
  
  We now put for $i=1,\ldots,d$ and $t\ge 0$
  \begin{align}
    \label{eq:prop3.1a}
    \widehat {N}^i_t := \# \left(\widehat { \mathcal Y}_t \cap
      (\{i\}\times[0,1])\right), \quad \widehat H^i_t := \#
    \left(\widehat { \mathscr C}_{t}( \{\bullet \}) \cap
      (\{i\}\times[0,1])\right).
  \end{align}
  Under $\mathbb P^{\iota,\tau}$ the process $(\widehat {\underline
    N}_t, \widehat {\underline H}_t)_{0\le t \le \tau}$ with $\widehat
  {\underline N}_t = ({\widehat N}^1_t,\dots, {\widehat N}^d_t)$ and
  $\widehat {\underline H}_t = (\widehat {H}^1_t,\dots,\widehat
  {H}^d_t)$, then has the same law as the process $({\underline L}_t,
  {\underline M}_t)_{0\le t \le \tau}$ defined in Proposition
  \ref{P:main}.  In particular, {\eqref{eq:P3111} is
    shown.}  
 
\end{proof} {We prepare the proof of Lemma~\ref{cdinf} by
  two estimates and include their (simple) proofs for
  convenience. 

\begin{remark}[Comparing $\underline \Pi$ and
  $\underline\Pi + \underline e_\iota$\label{rem:compPi}]
  Recall that $\underline\Pi = (\Pi_1,...,\Pi_d)$ is
  distributed according to $d$ independent Poisson distributions,
  where $\Pi_i \sim \text{Poi}(2\alpha\rho_i)$. As above,
  $\underline\Psi$ is distributed as $\underline \Pi$,
  conditioned to be positive (compare with \eqref{eq:Pi}) 
  and $\underline \Pi + \underline
  e_\iota$ is as in Proposition~\ref{P:main}. Then, ($d_{TV}$
  denoting the total variation distance)
  \begin{equation}
    \label{eq:TV1}
    \begin{aligned}
      d_{\text{TV}}(\underline\Pi, \underline\Psi) & = o(1),\\
      d_{\text{TV}}(\underline\Pi, \underline\Pi + \underline e_\iota) & = o(1)
    \end{aligned}
  \end{equation}
  as $\alpha\to\infty$.\\
  Indeed: The first result is immediate since $\mathbb
  P[\underline\Pi=0] = e^{-2\alpha}$. For the second result, by
  a second moment calculation, we have that
  $\Pi_\iota/(2\alpha\rho_\iota) \xrightarrow{\alpha\to\infty}1$
  in $L^2$ and therefore, as $\alpha\to\infty$,
  \begin{align*}
    d_{\text{TV}}(\underline\Pi, \underline\Pi + \underline e_\iota) & =
    e^{-2\alpha\rho_\iota} \sum_{k=1}^\infty
    \Big|\frac{(2\alpha\rho_\iota)^k}{k!} -
    \frac{(2\alpha\rho_\iota)^{k-1}}{(k-1)!}\Big| + o(1)\\ & =
    e^{-2\alpha\rho_\iota} \sum_{k=1}^\infty
    \frac{(2\alpha\rho_\iota)^{k}}{k!}\Big| 1 -
    \frac{k}{2\alpha\rho_\iota}\Big| + o(1) \\ & = \mathbb E\Big[\Big|
    1 - \frac{\Pi_\iota}{2\alpha\rho_\iota}\Big|\Big] + o(1) =
    o(1).
  \end{align*}
\end{remark}
}
\noindent
We are now ready for the

\begin{proof}[Proof of Lemma~\ref{cdinf}]
  For proving \eqref{lower} it suffices to show that, for each
  ${\alpha~>0}$,
  $\mathbb P^{\iota,\tau_\alpha}((I,U)\notin \mathscr
  C_0(\{\bullet\}), \mathcal Z_0^{\underline\infty}\subseteq \mathscr
  C_0(\{\bullet\})) = 0$
  for a particle $(I,U)$ taken uniformly from $\mathcal Y_0$.  To show
  this equality, we will prove that for all $i=1,\ldots, d$
  \begin{align}\label{eq:914}
    \mathbb P^{\iota,\tau_\alpha}((I,U)\notin \mathscr
    C_0(\{\bullet\}), \mathcal Z_0^{\underline\infty}\subseteq \mathscr
    C_0(\{\bullet\}), I=i\})=0.
  \end{align}
  We write $\mathbf p:= (I,U)$, and note that
  \begin{align*}
    R_{\mathbf p} := \inf\{r> 0: \mathscr C_r(\{\mathbf p\})
    \not\subseteq \{i\}\times [0,1]\} > 0 \quad \mathbb
    P^{\iota,\tau_\alpha} \mbox{ a.s. }
  \end{align*}
  The idea is now that with probability 1 we will find particles in
  $\mathcal Z$ which coalesce with
  $\bigcup_{r\ge 0} \mathscr C_r(\{\mathbf p\})$, withouth being
  affected by an earlier branching or coalescence with
  $\mathcal Y \setminus \bigcup_{r\ge 0} \mathscr C_r(\{\mathbf p\})$,
  and hence on the event
  $\{\mathbf p \notin \mathscr C_0(\{\bullet\})\}$ never connect to
  the particle~$\bullet$. In order to achieve this, we recall that
  under $ \mathbb P^{\iota,\tau_\alpha}$ the dynamics of $\mathcal Z$
  is given by $\mathscr D^b$, and this also applies conditional under
  $\mathcal Y$ for the particles in $\mathcal Z$ up to the time of
  their possible coalescence with particles in $\mathcal Y$.

  We now consider the subsystem of particles in $\mathcal Z$ which
  initiates from all those $\mathcal Z$ particles that are located in
  colony $i$ at time $0$, and remove from it all those particles that
  undergo a migration or a branching event, or coalesce with some
  particle in $\mathcal Y_r \setminus \mathscr C_r(\mathbf p)$ at some
  time $r \ge 0$. The system of particles of $\mathcal Z$ at time $r$
  which remain after this pruning (and all of which are located in
  colony $i$ by construction) will be denoted by $\mathcal Z^{(i)}_r$.
  
  Given $\mathcal Y$, the process $(\# \mathcal Z^{(i)}_r)$ is up to
  time $R_{\mathbf p}$ stochastically bounded from below by a death
  process $(K_r)_{{r \geq 0}}$ entering from infinity with death rate
  $\frac{1}{\rho_i}\binom{k}2 + (\alpha + \mu\sum_{j\neq i}b(i,j) +
  M)k$,
  where
  $M:= \max \{\#(Y_r{\cap}(\{i\}\times[0,1])) : 0\le r\le R_{\mathbf
    p}\}$.
  Hence, the essentially quadratic death rate guarantees that for any
  $c >0$
  $\lim_{\varepsilon \rightarrow 0 } \int_{\varepsilon}^{c} K_r dr =
  \infty$
  {a.s}. Indeed, $\rho_i rK_r \xrightarrow{r\to 0} 2$ a.s.\ by a
  second moment calculation, and $K_r \approx \frac{2}{\rho_i r}$ is
  not integrable at~$r=0$.  Consequently, also
  $\lim_{\varepsilon\to 0} \int_{\varepsilon}^c\# \mathcal Z^{(i)}_r
  dr = \infty$
  a.s., and thus with probability 1 there will be a coalescence
  between $\mathcal Z^{(i)}_r$ and $\mathscr C_r(\{\mathbf p\})$ for
  some $r < R_{\mathbf p}$. 

  Since on the event
  $\{\mathbf p \notin \mathscr C_0(\{ \bullet \}) \}$ the set
  $\mathscr C_r(\{\mathbf p\})$ is contained in the complement of
  $\mathscr C_r(\{\bullet\})$, we conclude the existence of particles
  in $\mathcal Z^{(i)}_0$ (and hence in
  $\mathcal Z_0^{\underline\infty}$) that belong to the complement of
  $\mathscr C_0(\{\bullet\})$. This shows \eqref {eq:914}.
  \\\\
  To prove \eqref{upper}, we first note that the particle $\bullet$  specified in Definition~\ref{def:small} is (because of the random marking)  a uniform choice
  from the particles in
  $\mathcal Y_{\tau_\alpha} \cap (\{\iota\} \times [0,1])$ under
  $\mathbb P^{\iota,\tau_\alpha}$, and a uniform choice from the
  particles in
  $\mathcal Y_{\tau_\alpha+\delta_\alpha} \cap (\{\iota\} \times
  [0,1])$
  under $\mathbb P^{\iota,\tau_\alpha+\delta_\alpha}$. 

  However, as noted already after formula \eqref{eq:P3111}, the
  conditioning at time $\tau_\alpha$, which is inherent in
  $\mathbb P^{\iota,\tau_\alpha}$, destroys the time-homogeneity of
  the dynamics of $\mathcal Y$ between times $0$ and ${\tau_\alpha}$;
  consequently, under $\mathbb P^{\iota,\tau_\alpha}$ the marking
  probabilities in $\mathcal Z_0^{\underline\infty}$ will be different
  from those in $\mathcal Y_0$. In order to account for this, the
  strategy of our proof will be to define under the {\em unconditioned} probability measure $\mathbb P$  particles
  $\circ$ and
  $\circ'$ whose distributions will turn out to be close in variation distance to that of $\bullet$ under $\mathbb P^{\iota,\tau_\alpha}$ and under $\mathbb P^{\iota,\tau_\alpha+\delta_\alpha}$, respectively, and which lead to the same marking probabilities in $\mathcal Y_0$ and $\mathcal Z_0^{\underline\infty}$.
  
  To be specific, let $\circ$ result from a uniform pick from
    $(\mathcal Y_{\tau_\alpha} \cup \mathcal
    Z^{\underline\infty}_{\tau_\alpha})\cap (\{\iota\}\times [0,1])$
    provided that this set is not empty; otherwise we pick $\circ$
    uniformly from
    $\mathcal Y_{\tau_\alpha} \cup \mathcal
    Z^{\underline\infty}_{\tau_\alpha}$.
    Similarly, we pick $\circ'$ uniformly from
    $(\mathcal Y_{\tau_\alpha+\delta_\alpha} \cup \mathcal
    Z^{\underline\infty}_{\tau_\alpha+\delta_\alpha})\cap
    (\{\iota\}\times [0,1])$
    provided that this set is not empty; otherwise we pick $\circ'$
    uniformly from
    $\mathcal Y_{\tau_\alpha+\delta_\alpha} \cup \mathcal
    Z^{\underline\infty}_{\tau_\alpha+\delta_\alpha}$.
   
    This construction immediately implies that for any fixed
    $i=1,...,d$, the family of events
    $(\{(i, U_{ig}) \in \mathscr C_0(\circ)\})_{(i, U_{ig}) \in
      \mathcal Y_0 \cup \mathcal Z^{\underline\infty}_0}$,
    is exchangeable conditional under
    $\mathcal Y_0 \cup \mathcal Z^{\underline\infty}_0$. We will show
    five properties ((A)-(E)) of the joint distribution of
    $\mathcal A$, $\mathcal Y$, $\mathcal Z^{\underline \infty}$ and
    $\circ$, proceeding in two main steps proving first (A) and then
    (B)-(E).
    \\
    {(A) the total variation distance between the distribution of
      $(\mathcal A_t, \mathcal Y_t, \mathcal Z^{\underline\infty}_t,
      \circ)_{0\leq t \leq \tau_\alpha}$
      under $\mathbb P$ and the distribution of
      $(\mathcal A_t, \mathcal Y_t, \mathcal Z^{\underline\infty}_t,
      \bullet)_{0\leq t \leq \tau_\alpha}$
      under $\mathbb P^{\iota, \tau_\alpha}$ converges to $0$ as
      $\alpha \to \infty$. Likewise, the total variation distance
      between the distribution of
      $(\mathcal A_t, \mathcal Y_t, \mathcal Z^{\underline\infty}_t,
      \circ')_{0\leq t \leq \tau_\alpha+\delta_\alpha}$
      under $\mathbb P$ and the distribution of
      $(\mathcal A_t, \mathcal Y_t, \mathcal Z^{\underline\infty}_t,
      \bullet)_{0\leq t \leq \tau_\alpha+\delta_\alpha}$
      under $\mathbb P^{\iota, \tau_\alpha+\delta_\alpha}$ converges
      to $0$ as $\alpha \to \infty$.}
    
  Having achieved this, we will construct a process
  $({\mathcal Z}'_r)_{0\leq r\leq \delta_\alpha}$ under $\mathbb P$
  with the following properties:
  \\
  (B) $\mathcal Z'_r \subseteq \mathcal Z^{\underline\infty}_r$ for all $r\in [0, \delta_\alpha]$, \\
  (C)
  $ \{\mathcal Z'_{\delta_\alpha} \subseteq \mathscr
  C_{\delta_\alpha}(\circ')\} \subseteq \{\mathcal
  Z^{\underline\infty}_{0} \subseteq \mathscr C_{0}(\circ')\};
  $\\
  (D) for any $i=1,...,d$,
  $\mathcal Z'_{\delta_\alpha}(\{i\}\times[0,1]) = {\mathcal{
      O}(\alpha/\log(\alpha)})$
  with high probability as $\alpha \to \infty$,
  \\
  (E) for any $i=1,...,d$,  the family of events
  $(\{(i,U_{ig}) \in \mathscr C_{\delta_\alpha}(\circ')\})_{(i,U_{ig})
    \in \mathcal Y_{\delta_\alpha} \cup {\mathcal
      Z'_{\delta_\alpha}}}$ is exchangeable conditional under
  $\mathcal Y_{\delta_\alpha} \cup \mathcal
  Z'_{\delta_\alpha}$.
  
The proof of the first assertion of (A) will be achieved in several steps. \\
  (i) We first note that because of Remark \ref{rem:compPi} the total variation distance between the distributions of $\mathcal Y_{\tau_\alpha}$ under $\mathbb P^{\iota,\tau_\alpha}$ and  under $\mathbb P$ converges to~$0$ as $\alpha \to \infty$. \\
  (ii) Now a crucial observation is that
  the time-reversed dynamics of
  $(\mathcal Y_t)_{0\le t \le \tau_\alpha}$ under $\mathbb P$ and
  under $\mathbb P^{\iota,\tau_\alpha}$ both are given by the dual jump kernel ${\mathscr D}^a$. Consequently,  the conditional
  distribution of $(\mathcal Y_t)_{0\le t\le \tau_\alpha}$ given
  $\mathcal Y_{\tau_\alpha}$ under  $\mathbb P^{\iota,\tau_\alpha}$ equals that under $\mathbb P$.  This shows that the variational distance between the distributions of  $(\mathcal Y_t)_{0\leq t \leq \tau_\alpha}$ under $\mathbb P^{\iota,\tau_\alpha}$ and under  $\mathbb P$ equals the variational distance between the distributions of $\mathcal Y_{\tau_\alpha}$ under $\mathbb P^{\iota,\tau_\alpha}$ and under~$\mathbb P$. \\
  (iii) Next note that  the conditional distribution of $(\mathcal A_t, \mathcal Y_t, \mathcal Z^{\underline\infty}_t)_{0\leq t \leq \tau_\alpha}$ given  $(\mathcal Y_t)_{0\leq t \leq \tau_\alpha}$ under  $\mathbb P^{\iota,\tau_\alpha}$ equals that under $\mathbb P$.  Hence the variational distance between the distributions of $(\mathcal A_t, \mathcal Y_t, \mathcal Z^{\underline\infty}_t)_{0\leq t \leq \tau_\alpha}$  under $\mathbb P^{\iota,\tau_\alpha}$ and under  $\mathbb P$ equals the variational distance between the distributions of $(\mathcal Y_t)_{0\leq t \leq \tau_\alpha}$ under $\mathbb P^{\iota,\tau_\alpha}$ and under~$\mathbb P$.\\
  (iv) Combining (i)-(iii) we see that the total variation distance between the distribution of $(\mathcal A_t, \mathcal Y_t, \mathcal Z^{\underline\infty}_t)_{0\leq t \leq \tau_\alpha}$
  under $\mathbb P$ and the distribution of
  $(\mathcal A_t, \mathcal Y_t, \mathcal Z^{\underline\infty}_t)_{0\leq t \leq \tau_\alpha}$
  under $\mathbb P^{\iota, \tau_\alpha}$ converges to $0$ as $\alpha \to \infty$.\\
  (v) According to Definition \ref{def:small}, due to the random marking under $\mathbb P^{\iota,\tau_\alpha}$ the particle $\bullet$ arises by a uniform choice from $\mathcal Y_{\tau_\alpha} \cap (\{\iota\} \times [0,1])$. We now claim that under $\mathbb P$, on an event whose probability converges to 1 as  $\alpha \to \infty$, the particle $\circ$  constitutes a uniform choice from $\mathcal Y_{\tau_\alpha} \cap (\{\iota\} \times [0,1])$. 
We will prove in the next section a key lemma,  Lemma~\ref{l:numberASGnew}, which will tell us that
    under $\mathbb P$ the number of particles in $\mathcal Y_t$ in
    colony $i$, $i=1,\ldots, d$, is with high probability as
    $\alpha \to \infty$ concentrated around $2\rho_i \alpha$,
    uniformly in $t\in[0,\tau_\alpha]$. Hence our claim holds if
    $\#(\mathcal Z^{\underline\infty}_{\tau_\alpha} \setminus \mathcal
    Y_{\tau_\alpha}) = o(\alpha)$  with high probability as $\alpha\to\infty$.  To see this, we note that the probability of the event
  $$\{\# (\mathcal Y_{t} \cap (\{i\}\times [0,1])) \geq
  2\alpha\rho_i(1-\varepsilon) \mbox{ for some } \varepsilon>0 \mbox{
    and for all } i; \, 0\le t \le \tau_\alpha\}$$
tends to 1 as $\alpha \to \infty$ because of
  Lemma~\ref{l:numberASGnew}. On this event, however, the process
  $\#(\mathcal Z^{\underline\infty}_{t} \setminus \mathcal
  Y_{t})_{0\le t\le \tau_\alpha}$ under $\mathbb P$
  is stochastically bounded from above by a birth-death process which in state
  $(k_1, ..., k_d)$ with $k = k_1+ ...+k_d$ has birth rate $\alpha k$
  and death rate at least 
  $\sum_{i=1}^d \frac{1}{\rho_i} \binom {k_i} 2 + \frac{1}{\rho_i} 2
  \alpha \rho_i (1-\varepsilon) k_i \geq \frac{k(k-d)}{2 d} +
  2\alpha(1-\varepsilon)k$, see~\eqref{eq:below}.
  Hence a second moment
  calculation shows that, with high probability as $\alpha\to\infty$, 
  $\#(\mathcal Z^{\underline\infty}_{\tau_\alpha} \setminus \mathcal
  Y_{\tau_\alpha}) = O(\tfrac{\alpha}{\log\alpha}) = o(\alpha)$.
 Together with (iv), this shows the first part of the assertion of (A);
 the arguments for the second part of (A) are the same,
 with $\tau$ being replaced by $\tau+\delta$.

For (B)-(E), we define the particle system
  $(\mathcal Z_t')_{0\leq t\leq \delta_\alpha}$ as a subsystem of
  $({\mathcal Z}^{\underline\infty}_t)_{0\leq t\leq \delta_\alpha}$
  (from which property (B) is automatic). As its initial state we
  take ${\mathcal Z}'_0 := {\mathcal Z}^{\underline\infty}_0$. We then
  impose the rule that the particles in $\mathcal Z'$ perform all coalescence
  and migration events dictated by $\mathcal Z$, but follow only a
  single one of the two particles in $\mathcal Z$ upon a branching
  event. More formally,
  \begin{itemize}
  \item if
    $(i,U_{ig}), (i,U_{ig'})\in \mathcal Z^{\underline\infty}_{r-}$
    coalesce, i.e. are replaced by
    $(i,U_{ig''})\in \mathcal Z'^{\underline\infty}_{r}$, and if
    $(i,U_{ig}),(i,U_{ig'}) \in \mathcal Z'_{r-}$, then the same
    replacement happens in ${\mathcal Z}'_{r}$,
  \item if
    $(i,U_{ig}), (i,U_{ig'})\in \mathcal Z^{\underline\infty}_{r-}
    \cup \mathcal Y_{r-}$
    coalesce, i.e. are replaced by
    $(i,U_{ig''})\in \mathcal Z^{\underline\infty}_{r}$, and if only
    $(i,U_{ig}) \in {\mathcal Z}'_{r-}$ but
    $(i,U_{ig'}) \notin {\mathcal Z}'_{r-}$, then
    $(i,U_{ig}) \in {\mathcal Z}'_{r-}$ is replaced by $(i,U_{ig''})$
    in ${\mathcal Z}'_{r}$,
  \item if $(i,U_{ig})\in \mathcal Z^{\underline\infty}_{r-}$ migrates
    to $j$, i.e. is replaced by $(j, U_{jg'})$ in
    $\mathcal Z^{\underline\infty}_{r}$, and if
    $(i,U_{ig})\in {\mathcal Z}'_{r-}$, the particle also migrates to
    $j$ in ${\mathcal Z}'_{r}$, i.e. $(i,U_{ig})$ is replaced by
    $(j, U_{jg'})$ in ${\mathcal Z}'_{r}$,
  \item if $(i,U_{ig})\in \mathcal Z^{\underline\infty}_{r-}$
    branches, i.e.\ is replaced by
    $(i,U_{ig'}), (i,U_{ig''}) \in \mathcal Z^{\underline\infty}_{r}$,
    and if $(i,U_{ig})\in {\mathcal Z'}_{r-}$, then $(i,U_{ig})$ is
    replaced by $(i,U_{ig'})$ in ${\mathcal Z}'_{r}$.
  \end{itemize}
  Note that
  $\mathcal Z^{\underline\infty}_0 \subseteq \mathscr C_0(\mathcal
  Z_{\delta_\alpha}')$
  by construction, so if
  $\mathcal Z'_{\delta_\alpha} \subseteq \mathscr
  C_{\delta_\alpha}(\circ')$
  then
  $\mathcal Z^{\underline\infty}_0 \subseteq \mathscr C_{0}(\mathscr
  C_{\delta_\alpha}(\circ')) = \mathscr C_{0}(\circ')$,
  i.e.\ we have property (C). Since $\mathcal Z'$ is a coalescing
  random walk, it is a death process which in state $(k_1,...,k_d)$
  with $k=k_1 + \cdots + k_d$ has death rate (using~\eqref{eq:below})
  $\sum_{i=1}^d \frac{1}{\rho_i} \binom {k_i} 2 \geq \frac{k(k-d)}{2
    d}$.
  A second moment calculation then shows (D).  Finally, the
  exchangeability claimed in (E) holds by construction.
    
  Based on properties (A)-(E) we can now prove \eqref{upper}. Indeed,
  because of (A) 
  \begin{align}\label{first}
    \mathbb P^{\iota,\tau_\alpha}({\mathcal Y}_0 \subseteq \mathscr
    C_0(\{\bullet\})) = \mathbb P({\mathcal Y}_0 \subseteq \mathscr
    C_0(\{\circ\}))+ o(1) \mbox{ as } \alpha \to \infty.
  \end{align}
  From the stationarity of $\mathcal Y$ under $\mathbb P$ together
  with property (A), 
  \begin{align}
    \mathbb P({\mathcal Y}_0 \subseteq \mathscr
    C_0(\{\circ\}))= \mathbb P({\mathcal Y}_{\delta_\alpha}
    \subseteq \mathscr C_{\delta_\alpha}(\{\circ'\}))+ o(1) \mbox{ as } \alpha \to \infty.
  \end{align}
  For all fixed $i \in \{1,\ldots, d\}$, consider the event
  $$E_{i,\alpha} := \{({\mathcal Y}_{\delta_\alpha}(\{i\}\times[0,1]) \ge \rho_i \alpha,  {\mathcal Z}'_{\delta_\alpha}(\{i\}\times[0,1]) \le \rho_i \alpha\}.$$
  Then because of the exchangeability property (E) we have
 $$\mathbb P({\mathcal Y}_{\delta_\alpha}\cap(\{i\}\times[0,1]) \subseteq  
 \mathscr C_{\delta_\alpha}(\{\circ'\}) \mid E_{i,\alpha} ) \le
 \mathbb P(\mathcal
   Z'_{\delta_\alpha}\cap(\{i\}\times[0,1])\subseteq \mathscr
 C_{\delta_\alpha}(\{\circ'\}) \mid E_{i,\alpha} ) .$$
 Because of property (D) we have $\mathbb P(E_{i,\alpha}) \to 1$ as
 $\alpha \to \infty$, and consequently
   \begin{align}
     \liminf_{\alpha \to \infty}  \mathbb P({\mathcal Y}_{\delta_\alpha}
     \subseteq \mathscr C_{\delta_\alpha}(\{\circ'\}))   \le  \liminf_{\alpha \to \infty}\mathbb P(\mathcal Z'_{\delta_\alpha}
     \subseteq \mathscr C_{\delta_\alpha}(\{\circ'\})).                               \end{align}
   Property (C) yields
            \begin{align}
\mathbb P(\mathcal Z'_{\delta_\alpha}
           \subseteq \mathscr C_{\delta_\alpha}(\{\circ'\}))    \le  \mathbb P(\mathcal Z^{\underline\infty}_{0}
           \subseteq \mathscr C_{0}(\{\circ'\}))                     \end{align}
           and property (A) implies 
           \begin{align}\label{last}
 \mathbb P(\mathcal Z^{\underline\infty}_{0}
           \subseteq \mathscr C_{0}(\{\circ'\})) =  \mathbb P^{\iota, \tau_\alpha + \delta_\alpha}(\mathcal Z^{\underline\infty}_{0}
                     \subseteq \mathscr C_{0}(\{\bullet\}))+ o(1) \mbox{ as } \alpha \to \infty.
                          \end{align}
                          Combining \eqref{first}-\eqref{last} we
                          arrive at \eqref{upper}.
\end{proof}

\subsection{Proof of Theorem \ref{t.1}}\label{proofthm1}
Let $\underline x \neq \underline 0$. Then equation \eqref{dualfix}
shows that the one-dimensional distributions of $\mathcal X^\ast$ are
determined. This shows the uniqueness (see Theorem~4.4.2 of
\cite{EthierKurtz86}).

Now let $(\mathcal X^\ast, \mathbf P)$ with
$\mathcal X^\ast = (\underline X^\ast(t))_{t\geq 0}$ be an entrance
law from $\underline 0$ for the dynamics \eqref{eq:SDE1}. For fixed
$t > 0$ and $0<\delta < t$ we can represent
$\mathbf P(\underline X^{\ast}(t) \in (\cdot))$ by means of
\eqref{deFincond}, putting $\tau:= t-\delta$ and using the ``random
paintbox'' $\underline X^\ast_\delta$ instead of the deterministic
$\underline x$ figuring in \eqref{deFincond}. More specifically, we
have by the Markov property of $\mathcal X^\ast$
\begin{align}
  \notag \mathbf P(\underline X^\ast(t) \in (.)) 
  &= \mathbf E[\mathbf P_{\underline X^\ast(\delta)}(\underline X^\ast(t-\delta) \in (.)) ]
  \\ & = \mathbf E[\mathbb P(\underline F^{\underline X^\ast(\delta),t-\delta}\in (.) \mid \mathcal Y_{t-\delta}
       \cap \mathcal A_{t-\delta}^{(\underline X^\ast(\delta))}\neq \varnothing)\mid  \underline X^\ast(\delta)].
       \label{repent}
\end{align}
Now consider the random vector
$\mathcal N_\delta:= (\mathcal Y_{t-\delta}(\{i\}\times
[0,1]))_{i=1,\ldots,d}$,
and write $\nu_\delta^{\underline X^\ast(\delta)}$ for the
distribution of $\mathcal N_\delta$ conditioned under the event
$\{\mathcal Y_{t-\delta}\cap\mathcal A_{t-\delta}^{(\underline
  X^\ast(\delta))}\neq \varnothing\}$
for given $\underline X^\ast(\delta)$. We recall that the
unconditional distribution of $\mathcal Y_{t-\delta}$ is the
distribution $\pi$ described in Proposition \ref{lemeq}. Thus we are
faced with a Poisson coloring, where the coloring is rare (due to the
assumption that $\underline X^\ast(\delta) \to \underline 0$ in
probability as $\delta \to 0$) but conditioned to produce at least one
colored particle.  Using the notation ${\underline \Pi}$ for a Poisson
vector as in Proposition \ref{lemeq}, we infer that there exist
$\{1,\ldots,d\}$-valued random variables $J_\delta$ independent of
$\underline \Pi$ such that the total variation distance between
$\nu_\delta^{\underline X^\ast(\delta)}$ and the distribution of
${\underline \Pi} + \underline e_{J_\delta}$ converges to $0$ as
$\delta \to 0$. We thus obtain from \eqref{repent} for all $t >0$
\begin{equation} \label{entrep} \mathbf P(\underline X^\ast(t) \in
  (.))  = \mathbf E [\mathbb P^{J_\delta,t-\delta}(\underline
  F^{J_\delta,t-\delta}\in (.))] + o(1) \qquad \mbox{as } \delta \to
  0.
\end{equation}}
Because of compactness, there is a sequence $\delta_n\to 0$, and an 
$\{1,\ldots, d\}$-valued random variable $J$ such that $J_{\delta_n}
\xRightarrow{n\to\infty} J$. By continuity, we thus obtain from \eqref{entrep} the representation
\begin{equation} \label{entrep1} \mathbf P(\underline X^\ast(t) \in (.))  =
  \mathbf E [\mathbb P^{J,t}(\underline F^{J,t}\in (.))], \quad t > 0.
\end{equation}
We claim that this representation is unique. Indeed, let $J'$ be a 
$\{1,\ldots, d\}$-valued random variable whose distribution is different from that of $J$, and which obeys
\begin{equation} \label{JJ} 
  \mathbf E [\mathbb P^{J,t}(\underline F^{J,t}\in (.))] = \mathbf E
  [\mathbb P^{J',t}(\underline F^{J',t}\in (.))], \quad t > 0.
\end{equation}
Then there must exist an $i \in \{1,\ldots, d\}$ such
that $\mathbf P(J=i) < \mathbf P(J'=i)$. On the other hand, from
Remark~\ref{rem:asexfr},
\begin{equation} \label{JJJ} \limsup_{t\to 0} \frac{\mathbf E[\mathbb
    E^{J,t}[F_i^{J,t}]]}{\mathbf E[ \mathbb E^{J',t}[F_i^{J',t}]]}=
  \limsup_{t\to 0} \frac{\sum_{j=1}^d  \mathbf P(J=j) \mathbb
    E^{j,t}[F_i^{j,t}]}{\sum_{j=1}^d  \mathbf P(J'=j) \mathbb
    E^{j,t}[F_i^{j,t}]} = \frac{\mathbf P(J=i)}{\mathbf P(J'=i)}< 1,
\end{equation}
which contradicts \eqref{JJ}.

From \eqref{entrep1} and  \eqref{entra} we obtain the
representation
$$\mathbf P(\underline X^\ast(t) \in (.))  = \mathbf 
E [\mathbb P^{J,t}(\underline F^{J,t}\in (.))] = \mathbf E [\mathbf
P^J_{\underline 0}(\underline X^\ast(t) \in (.))], \quad t>0,$$
which shows that every entrance law from $\underline 0$ is a convex
combination of the entrance laws
$\mathbf P^i_{\underline 0}(\underline X^\ast \in (.))$,
$i=1,\ldots, d$. To see the extremality of the latter, note that by
the same reasoning which led to the contradiction of \eqref{JJ} and
\eqref{JJJ}, the equality
$$\mathbf P_{\underline 0}^i(\underline X^\ast(t) \in (.))  =  
\mathbf E [\mathbf P^J_{\underline 0}(\underline X^\ast(t) \in
(.))],\quad t>0$$ is impossible unless $\mathbf P(J=i)=1$. This
completes the proof of Theorem \ref{t.1}.

\section[Proofs]{Proof of Theorem \ref{T2}}
\label{S:proofs2}
\subsection{Heuristics}\label{sec4.1}
Before we come to the formal proofs, we give a summary of all three
cases. Some basic ideas will be formalised in a few lemmas that are
collected in Section~\ref{S:lemmas}. The basis of our proof is the
ancestral selection graph and the approximate representation of the
fixation time in Proposition~\ref{P:main}. Moreover, by our
interpretation of the $d$ extremal entrance laws (see
Remark~\ref{rem:inter}) and symmetry, we can consider the situation
when the ASG has a single marked particle in colony~1.  Recall from
Definition \ref{def:small} that this marked particle $\bullet$ is of
the form $(1, U)$ for a [0,1]-uniformly distributed $U$.

It is important to note that at all times during the sweep, $L_t^i$
from Proposition~\ref{P:main} (which is the same as the number of
particles in $\mathcal Y$ with jump kernel $\mathscr D^a$ from
Section~\ref{ASGdef}, started in $\mathscr P \cup \{\bullet\}$) in colony $i$ is about $2\alpha\rho_i$ with high probability, see
Lemma~\ref{l:numberASGnew}. Within $\mathcal Y$, we distinguish
between marked particles (comprising
${\underline M_t} = (M_t^1,\dots,M_t^d)$ with
$M_{t}^i := \# \big(\mathscr C_t(\{\bullet\})
\cap(\{i\}\times[0,1])\big)$)
and wildtype particles; see also \eqref{eq:prop3.1a}.

Let us turn to {\em case 1}. Here, migration happens at rate of order
$\alpha$. Since splitting events of marked {particles} in
$(\underline M_t)_{t\geq 0}$ happen at rate $\alpha$ as well, marked
particles are present quickly (i.e.\ after time of order $1/\alpha$)
in all colonies. More precisely, the number of {particles} of the
$\mathpzc B$ allele $(M_1(t) + \cdots + M_d(t))_{t\geq 0}$ is close to
a pure branching process with branching rate $\alpha$ in this starting
phase. Then, when the number of {particles} exceeds
$\alpha\varepsilon$ (for some small $\varepsilon$), the {particles}
start to coalesce and the process is not pure branching any more. The
time when this happens is roughly
$(\log(\varepsilon\alpha))/\alpha \approx \log(\alpha)/\alpha$;
compare with Lemma~\ref{l:1}. Rescaling time by a factor of $\alpha$,
we can see -- using an ordinary differential equation -- that the time
the system needs to reach at least $2\alpha \rho_i(1-\varepsilon)$
{particles} in colony $i$, $i=1, {\dots,d}$, is of order $1/\alpha$
and hence is negligible for our claim. When there are
$2\alpha \rho_i(1-\varepsilon)$ marked {particles} in colony $i$,
there are about $\varepsilon 2\alpha$ wildtype {particles} in
total. Any wildtype line performs a subcritical branching process with
splitting rate $\alpha$ (which is the splitting rate within the ASG)
and death rate at least
$\frac{1}{\rho_i}2\alpha \rho_i(1-\varepsilon) = 2\alpha
(1-\varepsilon)$
(which is the coalescence rate with one of the
$2\alpha \rho_i(1-\varepsilon)$ marked {particles} within the same
colony). The extinction time of such a subcritical branching process
can be computed to be about $\log(\alpha)/\alpha$; see
Lemma~\ref{l:2}. In total, this gives a fixation time
$2\log(\alpha)/\alpha$.

~

Now we come to {\em case 2}, where migration happens at rate of order
$\alpha^\gamma$. For simplicity let us consider the case of two colonies
first. The number of marked {particles}
increases exponentially at rate $\alpha$ in colony~1, so the number of
 {particles} at time $(1-\gamma)\log(\alpha)/\alpha$
is $\exp((1-\gamma)(\log\alpha)) = \alpha^{1-\gamma}$. Since the migration rate
is of the order $\alpha^\gamma$, the first migrant to colony~2 arises
exactly by that time. Indeed, the total rate of migration is of order
$\alpha^{1-\gamma}\alpha^\gamma=\alpha$, but at time
$(1-\gamma-\varepsilon)\log(\alpha)/\alpha$ the total migration rate was
only $\alpha^{1-\gamma-\varepsilon}\alpha^\gamma =
\alpha^{1-\varepsilon}$. Moreover, we note that at time
$(1-\gamma+\varepsilon)\log(\alpha)/\alpha$ there are already
$\alpha^{1+\varepsilon}$ migrants, such that the first migrant occurs
around time $(1-\gamma)\log(\alpha)/\alpha$. After the first
migrant arises, its offspring starts to expand exponentially at rate
$\alpha$ in colony 2. After another time $x \log(\alpha)/\alpha$, it
increased in frequency to $\alpha^x$ 
{particles}. Moreover, the number of migrants from colony 1
(in the case $x<\gamma$, i.e.\ during the exponential growth phase in
colony 1) is $\int_0^{x\log(\alpha)/\alpha} \alpha^{1-\gamma}e^{\alpha
  t}\alpha^\gamma dt \approx \alpha^x$ which indicates that the number of
 marked {particles} in colony
2 is of order $\alpha^x$ by time $(1-\gamma+x)\log(\alpha)/\alpha$ for
$x<\gamma$; see also 2.\ in Lemma~\ref{l:1}. After time
$\log(\alpha)/\alpha$, the exponential growth phase in colony 1 is
over and the marked {particles} in colony
2 still increase exponentially due to splitting events in colony 2. At
time $(2-\gamma)\log(\alpha)/\alpha$, the exponential growth phase in both
colonies is over and -- as in case 1 -- it takes time of order
$1/\alpha$ until there are at least $2\alpha \rho_i(1-\varepsilon)$
 {particles} in colony $i$, $i=1,2$. Again, we
can consider the total number of wildtype  
{particles} and approximate it by a subcritical branching
process which dies after time about $\log(\alpha)/\alpha$; see again
Lemma~\ref{l:2}. Hence, the fixation time is about
$(3-\gamma)\log(\alpha)/\alpha$.
\\
For more than two colonies, it is clear that infection of a new colony
happens if and only if a neighbouring colony has about $\alpha^{1-\gamma}$
marked particles, which happens some time $(1-\gamma)\log(\alpha)/\alpha$
after this colony was infected. This leads to the first epidemic
model.

~

For {\em case 3}, where migration happens at rate of order $1/(\log\alpha)$,
observe that the total number of migration events between colonies in
a time of order $\log(\alpha)/\alpha$ is of order~1 (since there are
of order $\alpha$ {particles} per colony, each of which has a
migration rate of order $1/\log\alpha$). Again, we start by
considering two colonies{, $\mu = c/(\log\alpha)$,} and consider the
process on the new time-scale $d\tau = \frac{\alpha}{\log\alpha}
dt$. If the number of marked particles in colony 1 is smaller than $\alpha$, migration of a
marked particle is unlikely. At
time $\tau=1$, however, there are about $2\rho_1 \alpha$ marked particles in colony~1,
each of which migrates at rate $c/\alpha$ (on time-scale $d\tau$),
leading to an effective rate $2c\rho_1$ of migration. This means we
have to wait an exponential waiting time with rate $2c\rho_1$ for the
first migrant. After that time, the marked particles have already fixed in colony 1, but colony 2
needs another 2
time-units (on the time-scale $d\tau$) before fixation. \\
For $d$ colonies, note that a new colony {$k$} gets infected, if a
migrant from another infected colony is successful. After time
$\tau=1$, enough {particles} have accumulated on this colony such that
it can send migrants to its neighbouring colonies, hence becomes
infectious. If it is infectious, it sends migrants at rate { $2\rho_k
  a(k,j)$ to colony $j$}, which is exactly the second epidemic model.

\subsection{Some lemmas}
\label{S:lemmas}
We now state some general lemmas, which are used in the proof of
Theorem~\ref{T2}. {Recall that
  $\underline\rho=(\rho_1,\dots,\rho_d)$ constitutes the equilibrium
  distribution for the migration dynamics.}


\begin{lemma}[$\underline L$ concentrated around $2\alpha
  \underline\rho$]
  \label{l:numberASGnew}
  Assume $t_\alpha \downarrow 0$ and let
  {$\underline L = (\underline L_t)_{t\geq 0}$ with
    $\underline L_t = (L_t^1,\dots,L_t^d)$ follow the same dynamics as
    in Proposition~\ref{P:main}}. (Recall that { this process} depends
  on
  the parameter{s} $\alpha$ {and $\mu$}.)  \\
  Let $\mu = O(\alpha)$, $\varepsilon_\alpha\downarrow 0$ be any
  sequence such that $t_\alpha/\varepsilon_\alpha \to 0$ and
  $\mathbb P\Big(\Big|\frac{\underline L_0}{\alpha} - 2\underline
  \rho\Big|>\varepsilon_\alpha^2\Big) \to 0$. Then,
  $$\lim_{\alpha\to\infty}\mathbb P\Big(\sup_{0\leq r\leq t_\alpha} \Big|\frac{\underline L_r}{\alpha} - 2\underline \rho\Big|
  >\varepsilon_\alpha\Big) = 0.$$
\end{lemma}
Before turning to the proof of this lemma, let us observe that a sequence $\varepsilon_\alpha \downarrow 0$, which fulfills the
  requirements of Lemma~\ref{l:numberASGnew}, exists iff
  $\underline L_0/\alpha \xRightarrow{\alpha\to\infty} 2\underline\rho$.
\begin{remark}[A Lyapunov function for the limiting system]
  In the proof of the lemma, a function $h$ arises;
  see~\eqref{eq:ell}. In order to understand the form of this
  function, consider a chemical reaction network for chemical species
  $A_1,\dots,A_d$, governed by
  \begin{align}\label{eq:chem}
    A_i & \xrightarrow{\alpha} 2A_i, \qquad 2A_i
    \xrightarrow{1/\rho_i} A_i,\qquad A_i \xrightarrow{\mu b(i,j)}
    A_j.
  \end{align}
  for $i,j=1,\dots,d$. Here, the chemical species $A_i$ refers to the
   {particles} in colony $i$. (We refer the
  reader to \cite{Feinberg1979} for general notions of chemical
  reaction network theory.) For \emph{mass action kinetics}, properly
  rescaled, the vector of concentrations $\underline c =
  (c_1,\dots,c_d)$ with $c_i$ being the concentration of species $A_i$
  satisfies the dynamical system
  \begin{align}\label{eq:equi}
  \dot c_i = \alpha c_i - \frac{1}{2\rho_i} c_i^2 + 
  \mu \sum_{j\neq i} c_j b(j,i) - c_i b(i,j), \qquad i=1,\dots,d.
  \end{align}
  Since the system~\eqref{eq:chem} is weakly reversible and complex
  balanced, local asymptotic stability has been shown via the Lyapunov
  function $h(\underline c)= \sum_{i=1}^d
  ((\log(c_i/c_i^*)-1)c_i+c_i^*)$, see Proposition 5.3 in
  \cite{Feinberg1979}, where $(c_1^*, \dots, c_d^*)$
    denotes the equilibrium value of \eqref{eq:equi}. In fact, with
  $\kappa_i = c_i$ and $ 2\rho_i = c_i^\ast$, this is the function $h$
  appearing in \eqref{eq:ell} below.
\end{remark}

\begin{proof}[Proof of Lemma \ref{l:numberASGnew}]
  The generator of $\underline L^\alpha := \underline L/\alpha$ is
  \begin{align*}
    G_{\underline L^\alpha} f(\underline \kappa) & = \alpha^2
    \sum_{i=1}^d \Big(\kappa_i \big(f(\underline \kappa + \underline
    e_i/\alpha) - f(\underline \kappa)\big) \\ & \qquad \qquad \qquad
    + \frac{\kappa_i(\kappa_i-1/\alpha)}{2\rho_i} \big(f(\underline
    \kappa - \underline e_i/\alpha) - f(\underline \kappa)\big)\Big)
    \\ & \qquad \qquad \qquad + \mu\alpha \sum_{i,j=1}^d b(i,j)
    \kappa_i\big( f(\underline \kappa + \underline e_j/\alpha -
    \underline e_i/\alpha) - f(\underline \kappa)\big)
  \end{align*}
  for functions $f: \mathbb R_+^d \to\mathbb R$. Now, define
  \begin{align}\label{eq:ell}
    h(\underline \kappa) & = \sum_{i=1}^d \Big(\Big( \log\Big(
                           \frac{\kappa_i}{2\rho_i}\Big) - 1\Big) \kappa_i + 2\rho_i\Big) = 2 +
                           \sum_{i=1}^d \Big( \log \Big(\frac{\kappa_i}{2\rho_i}\Big) -
                           1\Big)\kappa_i.
  \end{align}
  This function is strictly convex and vanishes if and
    only if $\underline \kappa = 2\underline \rho$. Hence we are done
    once we show that $\sup_{0\leq r \leq t_\alpha} h(\underline
    L^\alpha_r) \xrightarrow{\alpha\to \infty} 0$ in probability. {For
      this, we will make use of Doob's maximal inequality for
      sub-martingales and some calculations using the generator of
      $\underline L^\alpha$. Since $\log(x+\delta) \leq (\log x) +
      \frac\delta x$, for $i,j=1,\dots,d$ and $i\neq j$,}
    \begin{align*}
    h(\underline \kappa \pm \underline e_i/\alpha) - h(\underline
    \kappa) & = \Big( \log \Big( \frac{\kappa_i \pm
      1/\alpha}{2\rho_i}\Big) -
    \log\Big(\frac{\kappa_i}{2\rho_i}\Big)\Big)(\kappa_i \pm \tfrac
    1\alpha) \\ & \qquad \qquad \qquad \qquad \qquad \quad\pm \frac
    1\alpha\Big(\log\Big(\frac{\kappa_i}{2\rho_i}\Big) - 1\Big) \\ & =
    \pm \frac 1\alpha \Big( \log \Big(\frac{\kappa_i \pm
      1/\alpha}{2\rho_i}\Big) - 1\Big) + \kappa_i \log\Big(1 \pm
    \frac{1}{\alpha \kappa_i}\Big) \\ & \leq \pm \frac 1\alpha
    \log\Big(
    \frac{\kappa_i \pm 1/\alpha}{2\rho_i}\Big),\\
    h(\underline \kappa + \underline e_j/\alpha - \underline
    e_i/\alpha) - h(\underline \kappa) & \leq \frac 1\alpha\Big( \log
    \Big(\frac{\kappa_j+1/\alpha}{2\rho_j}\Big) -
    \log\Big(\frac{\kappa_i - 1/\alpha}{2\rho_i}\Big)\Big).
  \end{align*}
  Moreover, 
  \begin{align*}
    \sum_{i,j=1}^d b(i,j) \Big(\kappa_j \frac{\rho_i}{\rho_j} -
    \kappa_i\Big) & = \sum_{j=1}^d
    \frac{\kappa_j}{\rho_j}\sum_{i=1}^d \rho_i b(i,j) -
    \sum_{i,j=1}^d \kappa_i b(i,j) \\ & = \sum_{j=1}^d
    \frac{\kappa_j}{\rho_j}\sum_{i=1}^d \rho_j b(j,i) -
    \sum_{i,j=1}^d \kappa_i b(i,j) \\ & = \sum_{i,j=1}^d \kappa_j
    b(j,i) - \kappa_i b(i,j) = 0,
  \end{align*}
  Hence, using that {$\log(x) \leq x-1$ and
    $(1-x)\log(x)\leq 0$ for all $x\geq 0$}, we obtain for sufficiently large $\alpha$ and for 
    $\underline
  \kappa \in A:=(\rho_1, 4\rho_1) \times\cdots\times (\rho_d,
  4\rho_d)$ 
  \begin{equation}
    \label{eq:761}
    \begin{aligned}
      G_{\underline L^\alpha}h(\underline \kappa) & \leq
      \alpha\sum_{i=1}^d \kappa_i \log\Big( \frac{\kappa_i +
        1/\alpha}{2\rho_i}\Big) - \frac{\kappa_i(\kappa_i-1/ \alpha )
      }{2\rho_i} \log\Big( \frac{\kappa_i - 1/\alpha}{2\rho_i}\Big) \\
      & \qquad + \mu \sum_{i,j=1}^d b(i,j) \kappa_i \Big(
      \underbrace{\log \Big( \frac{\kappa_j + 1/\alpha}{2\rho_j}\Big)
        - \log\Big(\frac{\kappa_i -
          1/\alpha}{2\rho_i}\Big)}_{{\leq
          \frac{(\kappa_j+1/\alpha)\rho_i}{(\kappa_i-1/\alpha)\rho_j}
          - 1}}\Big) \\ & \leq \sum_{i=1}^d \alpha \kappa_i \Big(
      \underbrace{\log\Big( \frac{\kappa_i - 1/\alpha}{2\rho_i}\Big) -
        \frac{\kappa_i - 1/\alpha}{2\rho_i} \log\Big(\frac{\kappa_i -
          1/\alpha}{2\rho_i}\Big)}_{\leq 0}\Big) \\ &
      \qquad \qquad \qquad \qquad \qquad \qquad \qquad \qquad \qquad
      \qquad + \frac{2 \alpha \kappa_i}{\alpha 
        (\kappa_i - 1/\alpha)}
      \\
      & \qquad + \mu \sum_{i,j=1}^d b(i,j) \Big(
      \kappa_j\frac{\rho_i}{\rho_j} - \kappa_i\Big) +
      C\frac{\mu}{\alpha} \sum_{i,j=1}^d b(i,j)
      {\frac{(\kappa_i + \kappa_j)\rho_i}{\kappa_i\rho_j}} \\
      & {\leq C'}
    \end{aligned}
  \end{equation}
  for some $C, C'>0$ which are independent of all parameters; recall that
 $\mu = \mathcal O(\alpha)$ by
  assumption.  Note that~\eqref{eq:761} shows that $(G_{\underline
    K_\alpha} h)^+$ is bounded uniformly { by $C'$} for all $\alpha$
  on the set $A$. Now, consider the martingale (recall that $g = g^+ -
  g^-$ with $g^+ = g\vee 0$ and $g^- = (-g)^+\geq 0$)
  \begin{align*}
    \Big(h(\underline L^\alpha(r\wedge T_A)) & -
    \int_0^{r\wedge T_A} (G_{\underline L^\alpha} h(\underline
    L^\alpha(s)) ds\Big)_{r\geq 0}\qquad \\ 
 & = \Big(h(\underline
   L^\alpha(r\wedge T_A)) + \int_0^{r\wedge T_A} (G_{\underline L^\alpha} h(\underline L^\alpha(s)))^- - (G_{\underline L^\alpha}
   h(\underline L^\alpha(s)))^+ ds\Big)_{r\geq 0},
  \end{align*}
  which is stopped when $\underline L^\alpha$ leaves the set $A$ at the
  stopping time $T_A$. Clearly, since $h\geq 0$,
  \begin{align*}
    \Big(h(\underline
    L^\alpha(r\wedge T_A)) + \int_0^{r\wedge T_A} (G_{\underline
    L^\alpha} h(\underline L^\alpha(s)))^-\Big)_{r\geq 0}
  \end{align*}
  is a positive submartingale. We restrict the initial state
  $\underline L^\alpha(0)$ to be in the set $A$ (this event has
  probability converging to~1 as $\alpha\to\infty$). Note that, by
  assumption, we find some $C''>0$ such that
  $\mathbb E[h(\underline L^\alpha(0))] \leq C'' \varepsilon_\alpha^2$
  and
  $\frac{t_\alpha}{\varepsilon_\alpha} \xrightarrow{\alpha\to\infty}
  0$.
  By Doob's martingale inequality, for $t_\alpha\downarrow 0$ and if
  $\varepsilon$ is small enough, for $\underline L^\alpha(0)\in A$,
  \begin{align*}
    \mathbb P( \sup_{0\leq r\leq t_\alpha} h(\underline
      L^\alpha(r)) > \varepsilon_\alpha) & = \mathbb P( \sup_{0\leq r\leq
        t_\alpha} h(\underline
      L^\alpha(r\wedge T_A)) > \varepsilon_\alpha)\qquad \\
    & \leq \mathbb P\Big(\sup_{0\leq r\leq t_\alpha} h(\underline
    L^\alpha(r\wedge T_A)) + \int_0^{r\wedge T_A} (G_{\underline
      L^\alpha} h(\underline L^\alpha(s)))^-ds > \varepsilon_\alpha\Big) \\ &
    \leq \frac {1 }{\varepsilon_\alpha}\mathbb E\Big[h(\underline
    L^\alpha(t_\alpha \wedge T_A)) + \int_0^{t_\alpha \wedge T_A}
    (G_{\underline L^\alpha} h(\underline L^\alpha(s)))^-ds\Big] \\ &
    = \frac{1}{\varepsilon_\alpha}\mathbb E\Big[ h(\underline L^\alpha(0)) +
    \int_0^{t_\alpha \wedge T_A} (G_{\underline L^\alpha} h(\underline
    L^\alpha(s)))^+ ds\Big] \\ & \leq \frac{C''\varepsilon_\alpha^2 +
      C't_\alpha}{\varepsilon_\alpha} \xrightarrow{\alpha\to\infty}
    0
  \end{align*}
  and the result follows. 
\end{proof}
\noindent
We also need a little refinement of the last lemma. Here, only bounds
on the birth and death rates are assumed.

\begin{corollary}[Particle-counting in a single colony concentrated
  around $2\alpha \rho$]
  \label{cor:numberASG}
  Let ${\mathcal V = (V_r)_{r\geq 0}}$ be a birth-death process
  with birth- and death rates ${b_k}$ and ${d_k}$ satisfying
  $$ 
\alpha k\leq {b_k} \leq \alpha k + c \alpha^{1+\gamma}, \qquad
  \frac{1}{\rho} \binom k 2 \leq {d_k} \leq \frac{1}{\rho} \binom k 2
  + c \alpha^\gamma k
$$ 
for some $\gamma\in [0,1)$ and $c\geq 0, \rho>0$. If
  $V_0/\alpha \xrightarrow {\alpha\to\infty}_p 2\rho$,
  then
  \begin{align*}
    \sup_{0\leq r\leq t_\alpha} \Big|\frac{V_r}{\alpha} -
    2\rho\Big| \xRightarrow{\alpha\to\infty} 0
  \end{align*}
  for $t_\alpha \downarrow 0$.
\end{corollary}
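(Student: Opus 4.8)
The plan is to repeat the Lyapunov-function argument of Lemma~\ref{l:numberASGnew} in the one-colony case, but to replace the final ``Doob'' step by a slightly sharper excursion estimate, because the additional birth/death rates here generate an error that is only small \emph{near} the equilibrium rather than uniformly. Write $V^\alpha := V/\alpha$ and let $h$ be the $d=1$ instance of the function in~\eqref{eq:ell}, i.e.\ $h(\kappa) = \big(\log(\kappa/(2\rho)) - 1\big)\kappa + 2\rho$; it is strictly convex, nonnegative, and vanishes exactly at $\kappa = 2\rho$, and $V_0/\alpha\to 2\rho$ in probability forces $h(V^\alpha_0)\to 0$ in probability. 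As in Lemma~\ref{l:numberASGnew}, since $h$ is bounded away from $0$ outside every neighbourhood of $2\rho$, it suffices to prove $\sup_{0\le r\le t_\alpha}h(V^\alpha_r)\to 0$ in probability.

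Decompose the generator of $V^\alpha$ as $G_{V^\alpha}h = G_0h + E$, where $G_0$ uses the clean rates $\alpha k$ and $\tfrac1\rho\binom k2$ --- the exact $d=1$ situation of Lemma~\ref{l:numberASGnew} --- and $E(\kappa) = (b_{\alpha\kappa}-\alpha^2\kappa)\big(h(\kappa+\tfrac1\alpha)-h(\kappa)\big)+\big(d_{\alpha\kappa}-\tfrac1\rho\binom{\alpha\kappa}{2}\big)\big(h(\kappa-\tfrac1\alpha)-h(\kappa)\big)$ collects the extra rates. On the compact set $A := (\rho,4\rho)$ the increment bounds $h(\kappa\pm\tfrac1\alpha)-h(\kappa)\le\pm\tfrac1\alpha\log\!\big(\tfrac{\kappa\pm1/\alpha}{2\rho}\big)$ from the proof of Lemma~\ref{l:numberASGnew} give $G_0h(\kappa)\le \alpha\kappa(1-b)\log b+\mathcal O(1)$ with $b := \tfrac{\kappa-1/\alpha}{2\rho}$; since $(1-b)\log b\le 0$, this is bounded above on $A$ and is $\le -c_0\alpha$ (for some $c_0>0$) whenever $\kappa$ stays a fixed distance from $2\rho$. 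For the error, $0\le b_{\alpha\kappa}-\alpha^2\kappa\le c\alpha^{1+\gamma}$, $0\le d_{\alpha\kappa}-\tfrac1\rho\binom{\alpha\kappa}{2}\le c\alpha^{1+\gamma}\kappa$, and $|h(\kappa\pm\tfrac1\alpha)-h(\kappa)|=\mathcal O(1/\alpha)$ on $A$, so $E(\kappa)=\mathcal O(\alpha^\gamma)$ on $A$. The decisive point is $\gamma<1$: hence, for every fixed $\delta_0>0$ and all large $\alpha$, $G_{V^\alpha}h(\kappa)\le -1$ uniformly on $A\cap\{h\ge\delta_0\}$.

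Fix $\epsilon>0$ small enough that $\{h\le\epsilon\}\subseteq A$, put $\delta_0:=\epsilon/2$, let $T_A$ be the exit time of $V^\alpha$ from $A$, and consider the martingale $M_r := h(V^\alpha_{r\wedge T_A})-h(V^\alpha_0)-\int_0^{r\wedge T_A}G_{V^\alpha}h(V^\alpha_s)\,ds$. Using again that birth and death rates are $\mathcal O(\alpha^2)$ on $A$ (here $\gamma<1$ enters once more) while the jumps of $h$ are $\mathcal O(1/\alpha)$, the predictable quadratic variation satisfies $\langle M\rangle_r\le C'(r\wedge T_A)$, whence $\mathbb E\big[\sup_{0\le r\le t_\alpha}M_r^2\big]\le 4C't_\alpha\to 0$. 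Now argue by excursions: on $\{h(V^\alpha_0)<\delta_0\}$, for $V^\alpha$ to reach $\{h\ge\epsilon\}$ by time $t_\alpha$ it must cross $A\cap\{\delta_0\le h<\epsilon\}$, a region on which the drift is $\le -1$; writing $\sigma$ for the hitting time of $\{h\ge\epsilon\}$ and $\sigma_0<\sigma$ for the last time before $\sigma$ with $h<\delta_0$, we get $h(V^\alpha_{\sigma-})-h(V^\alpha_{\sigma_0})\le -(\sigma-\sigma_0)+(M_{\sigma-}-M_{\sigma_0})$, forcing a martingale increment $M_{\sigma-}-M_{\sigma_0}\ge \epsilon-\delta_0-\mathcal O(1/\alpha)\ge\epsilon/4$ for large $\alpha$. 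Since $\mathbb P\big(2\sup_{r\le t_\alpha}|M_r|\ge\epsilon/4\big)\to 0$ and $\mathbb P\big(h(V^\alpha_0)\ge\delta_0\big)\to 0$, we conclude $\mathbb P\big(\sup_{r\le t_\alpha}h(V^\alpha_r)>\epsilon\big)\to 0$, and translating through $h$ gives the claim.

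The step I expect to be the real obstacle --- and the reason this is not a verbatim copy of Lemma~\ref{l:numberASGnew} --- is the control of the error $E$: because the extra rates scale like $\alpha^{1+\gamma}$ instead of $\alpha$, $G_{V^\alpha}h$ is \emph{not} uniformly bounded (it can be of order $\alpha^{2\gamma-1}$ in an $\mathcal O(\alpha^{\gamma-1})$-neighbourhood of $2\rho$), so the one-line Doob estimate used there fails for a general sequence $t_\alpha\downarrow 0$. The fix above trades the global bound for the two qualitative facts that remain robust under the perturbation: $G_{V^\alpha}h\le -1$ away from $2\rho$ (this is exactly where $\gamma<1$ is used), and the martingale part has bounded quadratic-variation rate, so that a fixed-size upward excursion of $h$ would require a fixed-size martingale fluctuation, which is impossible on a shrinking time horizon. (Alternatively one could sandwich $V$ between monotone birth-death comparison processes, but these keep the constant shift in the birth rate and so need the same analysis.)
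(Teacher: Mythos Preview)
Your Lyapunov--excursion argument is correct, and you have correctly identified why the one-line Doob step from Lemma~\ref{l:numberASGnew} would fail here: with the additive birth perturbation $c\alpha^{1+\gamma}$, the positive part $(G_{V^\alpha}h)^+$ is no longer uniformly bounded on $A$ but can be of order $\alpha^{2\gamma-1}$ in an $\mathcal O(\alpha^{\gamma-1})$-neighbourhood of $2\rho$, so $\int_0^{t_\alpha}(G_{V^\alpha}h)^+\,ds$ need not vanish for an arbitrary sequence $t_\alpha\downarrow 0$. Your remedy---trading the global drift bound for ``drift $\le -1$ outside any fixed neighbourhood of $2\rho$'' together with ``bounded quadratic-variation rate'', and then arguing that a fixed-size upward excursion of $h$ forces a fixed-size martingale oscillation on a shrinking horizon---is sound. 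The use of the non-stopping time $\sigma_0$ is harmless since you only bound $M_{\sigma-}-M_{\sigma_0}$ by $2\sup_{r\le t_\alpha}|M_r|$.

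That said, the paper takes a much shorter route, and your closing parenthetical dismisses it for the wrong reason. The point is that on the window $k\in[\alpha\rho,4\alpha\rho]$ the \emph{additive} perturbation $c\alpha^{1+\gamma}$ in the birth rate is dominated by the \emph{multiplicative} one $c'\alpha^\gamma k$ (with $c'=c/\rho$), and the death perturbation $c\alpha^\gamma k$ is dominated by $\tfrac{\varepsilon}{\rho}\binom{k}{2}$ for any fixed $\varepsilon>0$ once $\alpha$ is large (here $\gamma<1$ enters). Thus on $A$,
\[
\alpha k\le b_k\le(\alpha+c'\alpha^\gamma)k,\qquad \tfrac1\rho\tbinom{k}{2}\le d_k\le\tfrac{1+\varepsilon}{\rho}\tbinom{k}{2},
\]
and $V$ is sandwiched between two processes $V'\le V\le V''$ whose rates are of \emph{exactly} the clean single-colony form handled by Lemma~\ref{l:numberASGnew}, just with $\alpha$ replaced by $\alpha+c'\alpha^\gamma$ (for $V''$) or $\rho$ replaced by $\rho/(1+\varepsilon)$ (for $V'$). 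Applying that lemma to each and letting $\varepsilon\downarrow 0$ finishes the proof in a few lines. What the paper's approach buys is modularity---no re-derivation of generator bounds, no excursion argument. What your approach buys is self-containment without the ``absorb the constant shift into the linear rate'' trick; but that trick is precisely what makes this a corollary rather than a second lemma.
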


\begin{proof}
  For $c=0$, the assertion would just be a special case of
  Lemma~\ref{l:numberASGnew} for a single colony. For $c>0$, we fix
  $\varepsilon>0$ and take $\alpha$ large enough such that
  $$ \alpha k \leq {b_k} \leq (\alpha + c' \alpha^\gamma) k, \qquad
  \frac 1 \rho \binom k 2 \leq {d_k} \leq \frac 1 \rho (1+\varepsilon)
  \binom k 2$$ for some $c'>0$ whenever $k\in[\alpha \rho, 4\alpha
  \rho]$. Now consider the process $\mathcal V' = (V'_r)_{r\geq 0}$ {
    ($\mathcal V'' = (V''_r)_{r\geq 0}$) } with the lower { (upper)}
  bound of ${b_k}$ and the upper { (lower)} bound of ${d_k}$ as birth-
  and death rates. Clearly, the processes $\mathcal V$, $\mathcal V'$,
  $\mathcal V''$ can be coupled such that $V_r' \leq V_r \leq V_r''$
  for all $r$ as long as $V_r, V_r', V_r'' \in [\alpha\rho,
  4\alpha\rho]$ and conclude from Lemma~\ref{l:numberASGnew} { (by
     {suitably
      modifying the proof and the value of $\alpha$} used there)} that
  \begin{align*}
    \sup_{0\leq r\leq t_\alpha} \Big| \frac{V_r'}{\alpha} -
    \frac{2 \rho}{1+\varepsilon}\Big| &\xrightarrow{\alpha\to\infty}_p
    0,
\\
    \sup_{0\leq r\leq t_\alpha} \Big| \frac{V_r''}{\alpha} - 2
    \rho \frac{\alpha + c'\alpha^\gamma}{\alpha}\Big|
    & \xrightarrow{\alpha\to\infty} 0.
  \end{align*}
  Combining the last two {limits} gives the
  result since $\varepsilon>0$ was arbitrary.
\end{proof}

\noindent
Since the processes $M_1, ...,M_d$, which count the marked particles,
are in their initial phases close to a supercritical branching process,
we need  bounds for this kind of process{es}. In the proof of
Theorem \ref{t.2} we will use the next lemma to control (i) the time
until the number of marked particles in the first colony reaches the
order $\alpha^p$, (ii) the time until another colony is infected from
the first colony (i.e.\ the occurrence of the first marked particle on
this second colony), and (iii) the time until $\alpha^\gamma$
particles are marked in the infected colony, when the migration rate
$\mu= c\alpha^\gamma$. These three asymptotics correspond to
\eqref{eq:l21a}, \eqref{eq:l21}and \eqref{eq:l22} below.  In Lemma
\ref{l:1} we will deliberately suppress the effects of back-migration.
These effects are controlled in the course of the proof of Theorem \ref{t.2} by comparison arguments.
\begin{lemma}[Asymptotic hitting times of a bivariate birth-death process]
  Let \label{l:1} $c,c',c''>0$, $\gamma, p\in (0,1]$;
  $\varepsilon_\alpha \downarrow 0, \varepsilon'_\alpha \downarrow 0$
  with $\varepsilon_\alpha, \varepsilon_\alpha' > 1/(\log\alpha)$.
  Let ${\mathcal V} = (V_t)_{t\geq 0}$ be a birth-death process with
  birth rate ${b_k} = \alpha k$ and death rate
  ${d_k} \leq c \varepsilon_\alpha \alpha k$ for
  $k\leq \varepsilon_\alpha\alpha$, started in $V_0 = 1$.  Moreover,
  conditional under ${\mathcal V}$ let $\mathcal W = (W_t)_{t\geq 0}$
  be a birth-death process with time-inhomogeneous birth rate
  $\mu V_t + \alpha W_t$ and death rate
  ${d_k} \leq c'{\varepsilon'_\alpha} \alpha^\gamma k$ for
  $k \leq \varepsilon'_\alpha\alpha^\gamma$, starting in $W_0=0$.
  Then we can conclude
  \begin{enumerate}
  \item For $n\in \mathbb N$ let $T_{n}$ be the first time when
    $V_t=n$. Then,
    $\mathbb P(T_{\varepsilon_\alpha\alpha} = \infty) \leq
    c\varepsilon_\alpha$ and for all $\varepsilon>0$
   \begin{align} 
     \label{eq:l21a}
     \mathbb P\Big(\Big|\frac{\alpha}{\log\alpha}T_{\varepsilon_\alpha{\alpha^p}}
     - p\Big| > \varepsilon\Big)
     \xrightarrow{\alpha\to\infty} 0.
    \end{align}    
  \item For $n\in \mathbb N$ let $S_n$ be the first time when
    $W_t=n$. Then, for $\mu = c''\alpha^\gamma$, and any
    $\varepsilon>0$
    \begin{align}
      \label{eq:l21}
      & \mathbb P\Big(\Big|\frac{\alpha}{\log\alpha}S_1 -
        (1-\gamma)\Big|> \varepsilon\Big)
        \xrightarrow{\alpha\to\infty} 0
        \intertext{and}
        \label{eq:l22} 
      &\mathbb
        P\Big(\Big|\frac{\alpha}{\log\alpha}S_{{\varepsilon'_\alpha}\alpha^\gamma} 
        - 1\Big| > \varepsilon\Big) \xrightarrow{\alpha\to\infty} 0.
    \end{align}
  \end{enumerate}
\end{lemma}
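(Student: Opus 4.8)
\emph{Strategy and comparison processes.} The plan is to sandwich $\mathcal V$ (and, conditionally on $\mathcal V$, the process $\mathcal W$) between explicit branching processes and to read off the hitting-time asymptotics from the standard behaviour of supercritical linear birth--death processes (exponential growth with a random prefactor) and Yule processes. As long as $V_t\le z\alpha$ we have $d_k\le\varepsilon\alpha k/2$, so $\mathcal V$ can be coupled so that $V_t^-\le V_t\le V_t^+$, where $V^+$ is a Yule process of per-capita birth rate $\alpha$ (no deaths) and $V^-$ is a linear birth--death process with per-capita rates $(\alpha,\varepsilon\alpha/2)$. Here $V^-$ is supercritical with Malthusian parameter $\lambda:=\alpha(1-\varepsilon/2)$ and extinction probability $\varepsilon/2$; after rescaling time by $\alpha$ its law is independent of $\alpha$, $V_t^-e^{-\lambda t}\to\xi^-$ a.s., with $\xi^-$ having an atom of size $\varepsilon/2$ at $0$ and a continuous law on $(0,\infty)$; likewise $V_t^+e^{-\alpha t}\to\xi^+>0$ a.s.\ with $\mathbb E[\xi^+]=1$, and the hitting time of level $n$ by $V^+$ equals $\sum_{k=1}^{n-1}E_k$ with $E_k\sim\mathrm{Exp}(\alpha k)$ independent, hence is $\alpha^{-1}\log n$ up to an $O_p(1/\alpha)$ term.

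\emph{Part 1.} The bound $\mathbb P(T_{z\alpha}=\infty)\le\varepsilon/2$ is immediate: since $V^-\le V$ up to level $z\alpha$ and $V^-$ reaches $z\alpha$ on non-extinction, $\{T_{z\alpha}=\infty\}$ is contained in the extinction event of $V^-$. For the lower bound in \eqref{eq:l21a}, $V\le V^+$ gives $T_{z\alpha^p}\ge T^{V^+}_{z\alpha^p}=\alpha^{-1}(p\log\alpha+\log z)+O_p(1/\alpha)$, so $\tfrac{\alpha}{\log\alpha}T_{z\alpha^p}\ge p-o_p(1)$. For the upper bound I would use that, on $\{T_{z\alpha^p}<\infty\}$, $\mathcal V$ is strictly positive on $[0,(p+\varepsilon)\tfrac{\log\alpha}{\alpha}]$ and run a \emph{restart coupling}: each time a copy of $V^-$ (coupled below $\mathcal V$, seeded from one $\mathcal V$-particle) dies out spuriously, re-seed it; each incarnation survives with probability $\ge 1-\varepsilon/2$ and a failed one is a subcritical branching process of lifetime $O_p(1/\alpha)$, so after $O_p(1/\alpha)$ total time a surviving $V^-$ is in force, whence $V^-_t\sim\xi^-e^{\lambda t}$ reaches $z\alpha^p$ by time $\tfrac{p}{1-\varepsilon/2}\tfrac{\log\alpha}{\alpha}+O_p(1/\alpha)<(p+\varepsilon)\tfrac{\log\alpha}{\alpha}$ with probability $\to1$ (using $p\le1$, $\varepsilon$ small, and $\mathbb P(0<\xi^-<z\alpha^{-\delta})\to0$). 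Since $V\ge V^-$ throughout, $T_{z\alpha^p}\le(p+\varepsilon)\tfrac{\log\alpha}{\alpha}$ with probability $\to1$ on $\{T_{z\alpha^p}<\infty\}$.

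\emph{Part 2.} Because $W_0=0$, the first jump of $\mathcal W$ is an immigration from $\mathcal V$, so conditionally on $\mathcal V$ the time $S_1$ is the first point of a Poisson process of rate $\mu V_t$, i.e.\ $\mathbb P(S_1>t\mid\mathcal V)=\exp(-\mu\int_0^tV_s\,ds)$. For the lower bound in \eqref{eq:l21}, $\mathbb E[V_s]\le e^{\alpha s}$ yields $\mathbb P(S_1\le t)\le\mu\int_0^t\mathbb E[V_s]\,ds\le c\alpha^{\gamma-1}e^{\alpha t}=c\alpha^{-2\varepsilon}\to0$ at $t=(1-\gamma-2\varepsilon)\tfrac{\log\alpha}{\alpha}$. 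For the upper bound, on $\{T_{z\alpha}<\infty\}$ and the take-off event, $V_s\ge V^-_s\ge\tfrac12\xi^-e^{\lambda s}$ for $s$ past an $O_p(1/\alpha)$ time, so $\mu\int_0^tV_s\,ds\gtrsim c\,\xi^-\alpha^{\gamma-1}e^{\lambda t}\to\infty$ at $t=(1-\gamma+2\varepsilon)\tfrac{\log\alpha}{\alpha}$ since $(1-\varepsilon/2)(1-\gamma+2\varepsilon)>1-\gamma$ for $\varepsilon$ small; hence $\mathbb P(S_1>t\mid\mathcal V)\to0$. For \eqref{eq:l22} one argues similarly: $\tfrac{d}{dt}\mathbb E[W_t]\le\mu\mathbb E[V_t]+\alpha\mathbb E[W_t]$ gives $\mathbb E[W_t]\le\mu t\,e^{\alpha t}$, so by Markov's inequality $\mathbb P(S_{z'\alpha^\gamma}<t)=\mathbb P(W_t\ge z'\alpha^\gamma)\le(c/z')\,t\,e^{\alpha t}=o(1)$ at $t=(1-3\varepsilon)\tfrac{\log\alpha}{\alpha}$; and after $S_1$ (which is $\le(1-\gamma+2\varepsilon)\tfrac{\log\alpha}{\alpha}$ by \eqref{eq:l21}) the immigration term alone forces $W_t\gtrsim\mu\int_{S_1}^tV^-_s\,e^{\alpha(1-\varepsilon')(t-s)}\,ds\gtrsim c\,\xi^-\alpha^{\gamma-1}e^{\lambda t}$, which exceeds $z'\alpha^\gamma$ by time $\tfrac{1}{1-\varepsilon/2}\tfrac{\log\alpha}{\alpha}+O_p(1/\alpha)\le(1+3\varepsilon)\tfrac{\log\alpha}{\alpha}$; here the internal death bound $\varepsilon'\alpha^\gamma k$ is dominated by the $\mathcal V$-driven growth and contributes no correction to the limiting constant.

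\emph{Main obstacle.} The delicate point is the event $\{T_{z\alpha^p}<\infty\}$ — equivalently, the conditioning on non-extinction. The convenient lower comparison $V^-$ has a fixed positive extinction probability $\varepsilon/2$ which may exceed that of $\mathcal V$, so the naive coupling cannot simply transfer ``take-off'' from $V^-$ to $\mathcal V$; this forces the restart argument above (re-seeding $V^-$ from a single $\mathcal V$-particle, with $O_p(1)$ attempts each of negligible duration), and likewise requires the bookkeeping that $\mathcal V$ (and later $\mathcal W$) stay large once they take off so that the integrals $\int_0^tV_s\,ds$ and the growth of $W$ can be bounded from below. The rest — the Yule and linear birth--death estimates, Markov's inequality, and the elementary arithmetic with the exponents — is routine.
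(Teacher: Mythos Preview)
Your approach is correct and reaches the same conclusions, but differs from the paper's in three tactical choices. First, for the upper bound on $T_{z\alpha^p}$ in Part~1, the paper avoids your restart coupling by passing to the \emph{immortal lines} of $\mathcal V''$ (the process with exact death rate $\varepsilon\alpha k/2$): conditioned on survival of the initial particle, the immortal particles form a pure Yule process with rate $\alpha(1-\varepsilon/2)$, and this Yule process is a lower bound for $\mathcal V''$ (hence for $\mathcal V$) on the survival event. This is cleaner than tracking geometrically many re-seedings, though your restart argument is valid once one checks that the failed attempts take $O_p(1/\alpha)$ total time and that re-seeding from a single $\mathcal V$-particle is legitimate (it is, because the per-capita death rate stays below $\varepsilon\alpha/2$ while the total is below $z\alpha$). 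Second, for $S_1$ the paper observes that the number of $\mathcal V$-particles present at the first immigration into $\mathcal W$ is \emph{geometrically distributed} with success parameter $c\alpha^\gamma/(\alpha+c\alpha^\gamma)$, and then converts this into a statement about $S_1$ via the hitting-time estimates of Part~1; your Poisson-integral argument $\mathbb P(S_1>t\mid\mathcal V)=\exp(-\mu\int_0^t V_s\,ds)$ is more direct and arguably more transparent. Third, for the upper bound on $S_{z'\alpha^\gamma}$ the paper ignores immigration after $S_1$ and simply bounds $\mathcal W$ below by the intrinsic supercritical growth of a single particle (again via immortal lines), whereas you use the immigration stream $\mu\int V_s\,ds$ to drive $\mathcal W$ up; both routes give the constant~$1$ on the $\log\alpha/\alpha$ scale. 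Your identification of the conditioning on $\{T_{z\alpha^p}<\infty\}$ as the main obstacle is apt: both arguments handle it only up to an $\varepsilon/2$ slack that is absorbed when $\varepsilon\downarrow 0$ in the applications.
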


\begin{proof}
  1. We start with proving \eqref{eq:l21a}.  First, let $\mathcal V'$
  be a pure branching process with branching rate $\alpha$ (i.e.\
  ${b_k}' = \alpha k$ and ${d_k}'=0$){, started with $V_0'=1$} and
  $T_n'$ its hitting time of $V_t'=n$. Then we observe that, as
  $\alpha \to \infty$,
  \begin{equation}
    \label{eq:432}
    \mathbb E[T_{\varepsilon_\alpha\alpha^p}']  =
    \sum_{i=1}^{\varepsilon_\alpha\alpha^p-1} \frac{1}{\alpha i} =
    \frac{\log\alpha^p}{\alpha} + \mathcal{O}\left(\frac{\log(\varepsilon_\alpha)}{\alpha} \right),\qquad 
    \mathbb {V}[T_{{\varepsilon_\alpha}{\alpha^p}}']  =
    \sum_{i=1}^{{ \varepsilon_\alpha}{\alpha^p}- 1} \frac{1}{\alpha^2 i^2} =
    \mathcal O\Big(\frac 1{\alpha^2}\Big).
  \end{equation}
  Hence by Chebyshev's inequality
  \begin{align*}
    \mathbb P\Big(
    \Big| \frac{\alpha}{\log\alpha}T_{\varepsilon\alpha^p}' - {p} \Big| > \varepsilon\Big) 
    \leq \frac{\alpha^2\mathbb
    V[T_{{ \varepsilon_\alpha}{\alpha^p}}']}{(\log\alpha)^2 {\varepsilon^2}}
    \xrightarrow{\alpha\to\infty} 0.
  \end{align*}
  Since $T_n' \leq T_n$ stochastically for all $n$, this implies  
   \begin{align*}
     \mathbb P \Big( \frac{\alpha}{\log\alpha}T_{{\varepsilon_\alpha}\alpha^p} -
     {p} < - \varepsilon \Big)
     \xrightarrow{\alpha\to\infty} 0.
  \end{align*}
    
  For the second bound in \eqref{eq:l21a} we consider a process
  $\mathcal V'' = (V_t'')_{t\geq 0}$ with ${b_k}'' = \alpha k$ and
  ${d_k}'' = c\varepsilon_\alpha \alpha k$ { with $V_0''=1$}, and its
  hitting time $T_n''$ of $n$. Within the branching process
  $\mathcal V''$ we consider the immortal {lines}, i.e. the process of
  those particles which have descendants at any later time. By
  classical theory \cite[Chapter I.5]{AthreyaNey1972}, the probability
  that a single line will not be immortal equals the solution of
  $\frac{c\alpha\varepsilon_\alpha}{\alpha(1+c\varepsilon_\alpha)} +
  \frac{\alpha}{\alpha(1+c\varepsilon_\alpha)} x^2 = x$,
  which is smaller than~1, and hence equals $c\varepsilon_\alpha$. So,
  $\mathbb P(T_{{\varepsilon_\alpha}\alpha^p} {<}\infty)
  \geq 1-c\varepsilon_\alpha$
  follows and assuming
  $T_{{\varepsilon_\alpha}\alpha}<\infty$ we can restrict
  ourselves in the sequel to the event that the (single) initial
  particle of $\mathcal V''$ is immortal. Moreover, when an immortal {
    particle} splits in $\mathcal V''$, the new {particle} has the
  chance $1-c\varepsilon_\alpha$ to be immortal. So, every splitting
  event leads to a new immortal {particle} with probability
  $1-c\varepsilon_\alpha$, so $\mathcal V''$ (given it starts with a
  {single} immortal {particle}) is bounded from below by a binary pure
  branching process $\mathcal V'''$ with individual branching rate
  $\alpha(1-c\varepsilon_\alpha)$. For $n\in \mathbb N$, let $T_n'''$
  be the time it takes $\mathcal V'''$ to reach $n$.  Then
  $T_n'''\geq T_{n}''$ {stochastically} for all $n$, on the event that
  {$\mathcal V''$} starts with an immortal particle { at time 0}. On
  the other hand it is clear that, for all $n \in \mathbb N$,
  $T_n'' \geq T_n$ {stochastically}. Hence we obtain by the same
  calculations as in~\eqref{eq:432}, now applied to the process
  $\mathcal V'''$, the estimate
 \begin{align*}
   \lim_{\alpha\to\infty}
   \mathbb P \Big(
   \frac{\alpha}{\log\alpha} T_{{\varepsilon_\alpha}\alpha^{p}} -
   {p}  > \varepsilon \Big) 
   &
     \leq    \lim_{\alpha\to\infty}
     \mathbb P\Big(
     \frac{\alpha}{\log\alpha}T_{{\varepsilon_\alpha} \alpha^{p}}''' - {{p}} > \varepsilon\Big)
   \\
   & =    \lim_{\alpha\to\infty}
     \mathbb P\Big(\frac{\alpha(1-\frac{ c\varepsilon_\alpha}
     {2})}{\log(\alpha(1-\frac{ c\varepsilon_\alpha}{2}))} T_{{\varepsilon_\alpha}\alpha^{p}}'''
     - {p} > \varepsilon
     \Big) \\ & \leq \lim_{\alpha\to\infty} 
                \frac{\alpha^2\mathbb
                V[T_{{\varepsilon_\alpha}\alpha^{{p}}}''']}{\log(\alpha)^2
                \varepsilon^2} = 0.
  \end{align*}
  This completes the proof of \eqref{eq:l21a}.

  2. For the proof of \eqref{eq:l21}, we again use comparison
  arguments based on the processes $\mathcal V{'}$ and $\mathcal V'''$
  defined in the first part of the proof.  Having in mind that
  $ V_t''' \leq V_t \leq V_t'$stochastically as long as
  $V_t'\leq {\varepsilon_\alpha}\alpha$, we introduce the birth
  processes $\mathcal W' = (W'_t)_{t\geq 0}$ and
  $\mathcal W''' = (W_t''')_{t\geq 0}$, whose birth rates, conditional
  on $\mathcal V{'}$ resp. $\mathcal V'''$ are
  $\mu V_t' + \alpha W_t' $ and $\mu (V_t'''- W_t''')+ \alpha W_t'''$,
  respectively.  Also, we assume $W'_0=W'''_0 =0$.  Let $S_1'$ and
  $S_1'''$ be the first jump times of $\mathcal W'$ and of
  $\mathcal W'''$ (from $0$ to $1$).  From this construction, it is
  clear that $S_1' \leq S_1 \leq S_1'''$ stochastically. We claim
  that, on the event $\{T_{\varepsilon_\alpha \alpha}<\infty\}$, for any
  $\varepsilon>0$,
  \begin{align}\label{eq:SP}
    \mathbb P\Big(\frac{\alpha}{\log\alpha}S_1' - (1-\gamma) < -
    2\varepsilon\Big) \xrightarrow{\alpha\to\infty} 0 \intertext{as
    well as} \label{eq:SPPP} \mathbb
    P\Big(\frac{\alpha}{\log\alpha}S_1''' - (1-\gamma) > 2\varepsilon\Big)
    \xrightarrow{\alpha\to\infty} 0
  \end{align}
  which together imply the assertion
  \eqref{eq:l21}. For~\eqref{eq:SP}, let $L'$ be the number of
  {particle}s in $\mathcal V'$ {at the time when {$\mathcal W'$}
    reaches $1$ for the first time.}  Then, $L'$ is geometrically
  distributed with success parameter
  $\frac{c''\alpha^\gamma}{\alpha + c''\alpha^\gamma} =
  \frac{c''}{\alpha^{1-\gamma} + c''}$
  and thus
  $\mathbb P(L'<\alpha^{1-\gamma-\varepsilon})
  \xrightarrow{\alpha\to\infty} 0$.
  Recalling that $T_n'$ is the first time when $V_t'=n$, we conclude
  by
  \begin{align*}
    \lim_{\alpha\to\infty}\mathbb P\Big(\frac{\alpha}{\log\alpha}S_1'
    - (1-\gamma) < - 2\varepsilon  \Big)  
    & =  \lim_{\alpha\to\infty}\mathbb P\Big(\frac{\alpha}{\log\alpha}S_1'
      -(1-\gamma) < - 2\varepsilon, L' \geq \alpha^{1-\gamma-\varepsilon}\Big) \\ 
    &
      \leq \lim_{\alpha\to\infty}\mathbb P\Big(\frac{\alpha}{\log\alpha}
      T'_{\alpha^{1-\gamma-\varepsilon}} -(1-\gamma) < -2\varepsilon \Big) = 0,
  \end{align*}
  where the last equality follows by a similar calculation as in
  1. For~\eqref{eq:SPPP}, let $L'''$ be the number of {particle}s in
  $\mathcal V'''$ {at the time when {$\mathcal W'''$} reaches $1$ for
    the first time.} Then, $L'''$ is geometrically distributed with
  success parameter
  $\frac{c''\alpha^\gamma}{\alpha(1-c\varepsilon_\alpha) +
    c''\alpha^\gamma} =
  \frac{c''}{\alpha^{1-\gamma}(1-c\varepsilon_\alpha) + c''}$
  and thus
  $\mathbb P({L'''}>\alpha^{1-\gamma+\varepsilon})
  \xrightarrow{\alpha\to\infty} 0$. Similarly as above we observe that
  \begin{align*}
    \lim_{\alpha\to\infty}\mathbb
    P\Big(\frac{\alpha}{\log\alpha}S_1''' - (1-\gamma) > 2\varepsilon\Big) 
    & = \lim_{\alpha\to\infty}\mathbb
      P\Big(\frac{\alpha}{\log\alpha}S_1''' -( 1 - \gamma) < 2\varepsilon, L'''
      \leq \alpha^{1-\gamma+\varepsilon}\Big) \\ 
    & \leq
      \lim_{\alpha\to\infty}\mathbb P\Big(\frac{\alpha}{\log\alpha}
      T'''_{\alpha^{1-\gamma+\varepsilon}} - ( 1 - \gamma) < 2\varepsilon\Big) = 0.
  \end{align*}
  This concludes the proof of \eqref{eq:l21}. 
  
  Let us now turn to the proof of~\eqref{eq:l22}. Using \eqref{eq:l21}
  we can work on the event
  $$\Big\{\Big|\frac{\alpha}{\log\alpha} S_1 -( 1 - \gamma)\Big| <
  \varepsilon\Big\}\cap \{T_{\varepsilon_\alpha\alpha} < \infty\}.$$
  Then the time it takes to have
  $W_t = \varepsilon'_\alpha\alpha^\gamma$ is {stochastically} smaller
  than the waiting time until one {particle} { starting at time
    $(1 - \gamma + 2\varepsilon)\frac{\log\alpha}{\alpha}$} has
  $\varepsilon'_\alpha\alpha^\gamma$ offspring if we take the birth
  rate to be $\alpha k$ and the death rate to be
  ${c' \varepsilon_\alpha'}\alpha^\gamma k$. This time, in turn, is
  smaller than the time until { the number of immortal lines
    $\mathcal U$ in the latter process reaches}
  $ {c' \varepsilon'_\alpha} \alpha^\gamma$.  { ({In fact,}
    $\mathcal U$ is a pure branching process with individual branching
    rate $(1-c'\varepsilon_\alpha'\alpha^{\gamma-1})\alpha$.)}  Hence,
  by the same calculation as in the proof of part 1., now denoting  by $T'_n$ the first time when $U_t=n$
  \begin{align*}
    \lim_{\alpha\to \infty} \mathbb P\Big( \frac{\alpha}{\log\alpha} 
    &
      S_{\varepsilon'_\alpha\alpha^\gamma} -1 > 3\varepsilon\Big)  
    \\ & =
         \lim_{\alpha\to \infty} \mathbb P\Big(
         \frac{\alpha}{\log\alpha}S_{\varepsilon'_\alpha\alpha^\gamma} -1 
         >3\varepsilon, 
         \frac{\alpha}{\log\alpha} S_1 {< 1 - \gamma +
         2\varepsilon}\Big) \\
    & \leq \lim_{\alpha\to\infty} \mathbb
      P\Big({ \frac{\alpha}{\log\alpha}
      T'_{\varepsilon'_\alpha \alpha} - 1  > \varepsilon \Big| \frac{\alpha}{\log\alpha} T_1' 
      = (1-\gamma+2\varepsilon)} \Big) = 0.
  \end{align*}
  This proves one of the bounds in \eqref{eq:l22}. For the other bound
  we work again with $\mathcal V'$, the pure branching process with
  individual branching rate $\alpha$ started in $V'_0=1$, and note
  that $\mathbb E[V_t] \leq \mathbb E[V_t'] = e^{\alpha t}$. Again,
  conditional on $\mathcal V'$, let $\mathcal W'$ be a birth-death
  process with time-inhomogeneous birth rate $\mu V_t' + \alpha W_t'$
  and death rate~0, now starting at time
  $s = (1-\gamma-2\varepsilon)\frac{\log \alpha}{\alpha}$ with
  $W_s'=1$, and recall
  $\mathbb E[V_s']=e^{\alpha s} =
  \alpha^{1-\gamma-2\varepsilon}$.
  Then, the time it takes to have
  $W_t={\varepsilon_\alpha'}\alpha^\gamma$ is stochastically larger
  than the hitting time of ${\varepsilon_\alpha'} \alpha^\gamma$ of
  the process $\mathcal W'$. We have that
  $\frac d{dt} \mathbb E[W_t'] = \mu \mathbb E[V_t'] + \alpha \mathbb
  E[W'_t]$, $t\ge s$, $W_s' = 1$, which is solved by
    $$ \mathbb E[W_t'] 
    = \frac{e^{\alpha t}}{\alpha} \big( \alpha^{\gamma+2\varepsilon} +
    \alpha \mu t - \mu(1-\gamma-2\varepsilon)\log\alpha\big), \quad t \ge s.$$ Therefore,
    with $\mu=c''\alpha^\gamma$ and $t =
    (1-3\varepsilon)\frac{\log\alpha}{\alpha}$, using Markov's
    inequality,
    \begin{align*}
      \lim_{\alpha\to\infty} \mathbb P\Big( \frac{\alpha}{\log\alpha}
      S_{{\varepsilon_\alpha'}\alpha^\gamma} < 1 - 3\varepsilon\Big) & \leq
      \lim_{\alpha\to\infty} \mathbb P\Big(
      W'_{(1-3\varepsilon)\frac{\log\alpha}{\alpha}} > {\varepsilon_\alpha'}
      \alpha^\gamma\Big) \\ & \leq \lim_{\alpha\to\infty}
      \frac{\alpha^{1-3\varepsilon}}{{\varepsilon_\alpha'}
        \alpha^{1+\gamma}}\big(\alpha^{\gamma+2\varepsilon} +
      c''\alpha^\gamma (\gamma-\varepsilon) \log\alpha\big) = 0,
  \end{align*}
  which completes the proof of \eqref{eq:l22}.
 \end{proof}

 \noindent The following is a direct consequence of Lemma \ref{l:1} in
 the case of $d$ colonies.

 \begin{corollary}\label{r:1} Assume the birth-death process
   $\mathcal V$ with the same rates as in Lemma \ref{l:1} starts in
   $V_0=k$ for $k\in \mathbbm{N}$, and consider not a single
   birth-death process $\mathcal{W}$, but $\ell$ birth-death processes
   $\mathcal{W}^1$, ..., $\mathcal{W}^\ell$, which, conditional under
   $\mathcal V$, have birth rate $\mu V_t + \alpha W^{i}$ for
   $i \in \{0, ..., \ell\}$ and death rate
   $d_k \leq c\varepsilon'_\alpha \alpha^\gamma k$ for
   $k\leq \varepsilon'_\alpha \alpha^{\gamma}$ (again with the
   notation and assumptions from Lemma \ref{l:1}). Let
   $n\in \mathbb N$ and $S^{i}_n$ be the first time when
   $W^{i}_t=n$. Then, for $\mu = c''\alpha^\gamma$ and any
   $\varepsilon>0$,
   \begin{align*}& \mathbb P\Big(\Big|\frac{\alpha}{\log\alpha}S^{i}_1
     - (1-\gamma)\Big| > \varepsilon, i \in \{0, ..., \ell\}\Big)
     \xrightarrow{\alpha\to\infty} 0 \intertext{ and } &\mathbb
     P\Big(\Big|\frac{\alpha}{\log\alpha}S^{i}_{\varepsilon'_\alpha
       \alpha^{\gamma}} - 1\Big| > \varepsilon, i \in \{0, ...,
     \ell\}\Big) \xrightarrow{\alpha\to\infty} 0.
   \end{align*}
 \end{corollary}

\noindent
We now complement Lemma \ref{l:1} to cover also the case in which the
process {${\mathcal V}$} starts in ${c'} \alpha^\gamma$ { for some
  $c'>1$} instead of~1. {This lemma will be used later to control the
  time until of order $\alpha$ particles are marked when one starts
  with $c' \alpha^\gamma$ marked particles.}

\begin{lemma}[Exponential growth of a near-exponential process]
  Let \label{l:4} $0\leq \gamma< p \leq 1$, $c', c''>0$ and
  $c_\alpha, \varepsilon_\alpha\downarrow 0$ be sequences with
  $\varepsilon_\alpha>1/(\log\alpha)$ and
  $\log c_\alpha \in o(\log\alpha)$. Let
  ${\mathcal V} = (V_t)_{t\geq 0}$ be a birth-death process with birth
  rate ${b_k}$ with
  $\alpha k \leq {b_k} \leq \alpha k + c'\alpha^{1+\gamma}$ and death
  rate ${d_k} \leq c''\varepsilon_\alpha\alpha k$ for
  $k\leq \varepsilon_\alpha\alpha$, started in
  $V_0 = c_\alpha \alpha^\gamma$. Let $T_{n}$ be the first time when
  $V_t=n$. 
  
  Then, for all $\varepsilon>0$,
  \begin{align}\label{eql:4}
    \mathbb P\Big(\Big|\frac{\alpha}{\log\alpha}T_{\varepsilon_\alpha{\alpha^p}}
    - ({p}-\gamma)\Big|  >
    \varepsilon \Big) \xrightarrow{\alpha\to\infty} 0.
  \end{align}
\end{lemma}

\begin{proof}
  We need to take two bounds for the process $\mathcal V$. Let
  $\mathcal V' = (V_t')_{t\geq 0}$ be a birth-death process with birth
  rate ${b_k'} = \alpha k + c'\alpha^{1+\gamma}$, death rate ${d_k}=0$
  and $V_0'= c_\alpha \alpha^\gamma$. If
  $T_{\varepsilon_\alpha{\alpha^p}}'$ is the first time when
  $V_t'=\varepsilon_\alpha{\alpha^p}$, it is clear that
  $T_{\varepsilon_\alpha {\alpha^p}}'\leq T_{\varepsilon_\alpha
    {\alpha^p}}$ stochastically.

  We define ${\mathcal W'} = (W_t')_{t\geq 0}$ with
  $W'_t := \frac {\log V'_{t \log(\alpha)/\alpha}}{\log \alpha}$,
  i.e.\ $V'_{t \log(\alpha)/\alpha} = \alpha^{{W'_t}}$ and
  $W_0'=\gamma+ \frac{\log c_\alpha}{\log \alpha}\in \gamma + o(1) $.
  Note that
  $\frac{\alpha}{\log\alpha}T_{\varepsilon_\alpha {\alpha^p}}'$ is the
  time when $\mathcal W'$
  hits~$p + \frac{\log \varepsilon_\alpha}{\log \alpha} \in p + o(1)$.
  Let $G'$ be the generator of $\mathcal W'$. Then, for $x>\gamma$
  \begin{align*}
    G'f(x) & = (\log\alpha) (\alpha^x +
    c\alpha^\gamma)(f(\underbrace{\tfrac 1{\log \alpha}\log (\alpha^{x} +
      1)}_{\approx x+ \tfrac 1{\log\alpha}\alpha^{-x}}) - f(x))
    \xrightarrow{\alpha\to\infty}f'(x).
  \end{align*}
  {Consequently, and since $W_t'$ quickly leaves its initial state
    $W_0'\in\gamma + o(1)$, by Theorem 4.2.11 in \cite{EthierKurtz86}
    the process $\mathcal W'$ converges as $\alpha\to\infty$ on the
    subsets
    $E_{\alpha}:= \{ \frac { \log k}{\log \alpha} : \log k \in
    \mathbb{N}, k \geq \gamma \log \alpha + \log c_\alpha \}$
    to the (right continuous) process with semigroup
    $T(t)f(x)= f(x+t)$ for $x\geq \gamma$}, growing linearly and
  deterministically at speed~1. Since $W_0'\in \gamma + o(1)$, it
  hits~${p} + o(1)$ asymptotically as $\alpha \to \infty$ at time ${p} -\gamma$ and
  \begin{align*}
    \mathbb P\Big(\frac{\alpha}{\log\alpha}T_{\varepsilon_\alpha {\alpha^p}} - ({p}-\gamma) < -
    \varepsilon\Big) & \leq \mathbb
                       P\Big(\frac{\alpha}{\log\alpha}T_{\varepsilon_\alpha {\alpha^p}}' -({p}
                       - \gamma) < -
                       \varepsilon\Big) \xrightarrow{\alpha\to\infty} 0. 
  \end{align*} 
  On the other hand, consider the process
  $\mathcal V'' = (V''_t)_{t\geq 0}$ with birth rate
  ${b_k''} = \alpha k$, death rate
  ${d_k} = c''\varepsilon_\alpha \alpha k$ and
  $V_0''= c_\alpha \alpha^\gamma$, as well as the time
  $T_{\varepsilon_\alpha {\alpha^p}}''$ when this process hits
  $\varepsilon_\alpha {\alpha^p}$. Again, consider
  $\mathcal W'' = (W''_t)_{t\geq 0}$ with
  $W''_t := \frac {\log V''_{t \log(\alpha)/\alpha}}{\log \alpha}$ and
  note that
  $\frac{\alpha}{\log\alpha}T''_{\varepsilon_\alpha {\alpha^p}}$ is
  the time when $\mathcal W''$
  hits~${p} + \frac{\log \varepsilon_\alpha}{\log \alpha} \in p +
  o(1)$.
  Then, as above, if $G''$ is the generator of $\mathcal W''$, for
  smooth $f$,
  \begin{align*}
    G''f(x) & = (\log\alpha)\alpha^x
              (f(\tfrac 1{\log \alpha}\log (\alpha^{x} +
              1)) - f(x)) + c'' (\log\alpha) \varepsilon_\alpha \alpha^x (f(\tfrac 1{\log \alpha}\log (\alpha^{x} -
              1)) - f(x))
    \\ & \xrightarrow{\alpha\to\infty} f'(x)
  \end{align*}
  and, since
  $W_0''=\gamma + \frac{\log c_\alpha}{\log \alpha} \in \gamma +
  o(1)$,
  the process $\mathcal W''$ hits 
    $ {p}+ \frac{\log \varepsilon_\alpha}{\log \alpha}\in p + o(1)$
    asymptotically at time ${p}-\gamma$ and
    \begin{align*}
      \mathbb P\Big(\frac{\alpha}{\log\alpha}T_{\varepsilon_\alpha {\alpha^p}} - ({p}-\gamma) >
      \varepsilon\Big) & \leq \mathbb
                         P\Big(\frac{\alpha}{\log\alpha}T_{\varepsilon_\alpha {\alpha^p}}'' -({p}
                         - \gamma) >
                         \varepsilon\Big) \xrightarrow{\alpha\to\infty} 0. 
    \end{align*} 
\end{proof}

\medskip

\noindent
While the last two lemmata were about supercritical branching
processes, we also need the following result about the extinction time
of a process which is close to a subcritical branching process.

\begin{lemma}[Extinction time of a birth-death process]\label{l:2}
  Let $c>0$ and $\varepsilon_\alpha\downarrow 0$. Let
  ${\mathcal V} = (V_t)_{t\geq 0}$ be a birth-death process with birth
  rate ${b_k} = \alpha k$ and death rate ${d_k}$ such that
  $ \alpha (2-\varepsilon_\alpha) k \leq {d_k} \leq
  \alpha(2+\varepsilon_\alpha) k$,
  started in $V_0 = z_\alpha \alpha$ with $z_\alpha \rightarrow z$ for
  some $z >0$. Let $T_{z_\alpha \alpha}$ be the extinction time of
  $\mathcal V$, i.e.\ the first time when $V_t=0$. 
  
  Then,
  for all $\varepsilon>0$,
  $$ \mathbb P\Big(  \Big|\frac{\alpha}{\log\alpha}T_{z_\alpha \alpha} - 1\Big| > \varepsilon\Big)
  \xrightarrow{\alpha\to\infty} 0.$$
\end{lemma}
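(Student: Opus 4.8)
The plan is to analyze the birth-death process $\mathcal V$ directly via its jump chain and holding times, comparing it from above and below with pure linear subcritical branching processes whose death rates are $\alpha(2-\varepsilon)k$ and $\alpha(2+\varepsilon)k$ respectively. First I would observe that, by the obvious coupling on the graphical representation (thinning the death rate), if $\mathcal V^-$ and $\mathcal V^+$ denote the linear birth-death processes with birth rate $\alpha k$ and death rates $\alpha(2-\varepsilon)k$ and $\alpha(2+\varepsilon)k$, then $V^-_t \le V_t \le V^+_t$ for all $t$ (on the same probability space, started from the common value $z\alpha$). Hence it suffices to prove that the extinction time $T^{\pm}$ of each of $\mathcal V^{\pm}$, started from $z\alpha$, satisfies $\tfrac{\alpha}{\log\alpha} T^{\pm} \xrightarrow{\alpha\to\infty}_p 1$, since then $\tfrac{\alpha}{\log\alpha} T_{z\alpha}$ is squeezed between two quantities both converging to $1$ (and the tolerance $2\varepsilon$ absorbs the $O(\varepsilon)$ discrepancy; one should pick the comparison processes with, say, death rates $\alpha(2\mp 2\varepsilon)k$ to be safe, or simply note $|\tfrac\alpha{\log\alpha}T^\pm - 1|$ is itself small and apply the triangle inequality).

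So the core is the following: a linear birth-death process with birth rate $\lambda k$ and death rate $\nu k$, $\nu > \lambda$, started from $n$ individuals, has extinction time $T$ with $\mathbb E[T] = \tfrac{1}{\nu-\lambda}\big(\log n + O(1)\big)$ and bounded variance, uniformly as $n\to\infty$ with $\lambda,\nu$ fixed. I would derive this by the standard decomposition: the $n$ initial lineages evolve as $n$ i.i.d.\ copies of a single-ancestor subcritical linear birth-death process, so $T = \max_{1\le i\le n} \tau_i$ where $\tau_i$ are i.i.d.\ copies of the extinction time $\tau$ of the one-ancestor process. For a single-ancestor subcritical linear birth-death process it is classical (e.g.\ Athreya--Ney, Ch.\ I, or a direct generating-function computation) that $\mathbb P(\tau > t) = \tfrac{(\nu-\lambda)e^{-(\nu-\lambda)t}}{\nu - \lambda e^{-(\nu-\lambda)t}}$, which for large $t$ behaves like $\tfrac{\nu-\lambda}{\nu} e^{-(\nu-\lambda)t}$, i.e.\ $\tau$ has an essentially exponential right tail with rate $\nu-\lambda$. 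Standard extreme-value asymptotics for the maximum of $n$ i.i.d.\ variables with such a tail then give $\tfrac{\nu-\lambda}{\log n}\max_i \tau_i \xrightarrow{n\to\infty}_p 1$: the upper bound follows from a union bound, $\mathbb P(\max_i\tau_i > (1+\delta)\tfrac{\log n}{\nu-\lambda}) \le n \cdot \tfrac{\nu-\lambda}{\nu} n^{-(1+\delta)} \to 0$, and the lower bound from independence, $\mathbb P(\max_i\tau_i \le (1-\delta)\tfrac{\log n}{\nu-\lambda}) = (1 - \mathbb P(\tau > (1-\delta)\tfrac{\log n}{\nu-\lambda}))^n \le \exp(-n \cdot c n^{-(1-\delta)}) \to 0$.

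Applying this with $n = z\alpha$, $\lambda = \alpha$, and $\nu = \alpha(2\mp\varepsilon)$, so that $\nu - \lambda = \alpha(1\mp\varepsilon)$ and $\log n = \log(z\alpha) = \log\alpha + \log z$, we get $\tfrac{\alpha(1\mp\varepsilon)}{\log\alpha + \log z} T^\pm \xrightarrow{\alpha\to\infty}_p 1$, hence $\tfrac{\alpha}{\log\alpha} T^\pm \xrightarrow{\alpha\to\infty}_p \tfrac{1}{1\mp\varepsilon}$, which lies in $(1-2\varepsilon, 1+2\varepsilon)$ for $\varepsilon$ small. Combined with the sandwich $V^-_t \le V_t \le V^+_t$, which forces $T^- \le T_{z\alpha} \le T^+$, this yields $\mathbb P(|\tfrac{\alpha}{\log\alpha}T_{z\alpha} - 1| > 2\varepsilon) \to 0$, as claimed. (A minor bookkeeping point: since the statement fixes $\varepsilon$ in advance, I would run the argument with comparison death rates $\alpha(2-\varepsilon')k$, $\alpha(2+\varepsilon')k$ for $\varepsilon' = \varepsilon$ and simply check that $\tfrac{1}{1-\varepsilon} - 1 < 2\varepsilon$ for $\varepsilon$ small, or reduce to small $\varepsilon$ at the outset since larger tolerances are weaker.)

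The main obstacle is making the extreme-value and tail estimates \emph{uniform enough} to conclude convergence in probability as $\alpha\to\infty$ rather than along a single sequence — but since $\lambda/\nu$ and $z$ are fixed and only $n = z\alpha$ grows, the one-ancestor survival function is an explicit elementary function and the union-bound/independence estimates above are already uniform, so this is routine. A secondary technical care point is the passage from the exact one-ancestor tail $\tfrac{\nu-\lambda}{\nu-\lambda e^{-(\nu-\lambda)t}}e^{-(\nu-\lambda)t}$ to the clean exponential bound: for the upper bound on $\max_i\tau_i$ one uses $\mathbb P(\tau > t) \le e^{-(\nu-\lambda)t}$ for all $t\ge 0$ (valid since the prefactor is $\le 1$), and for the lower bound one uses $\mathbb P(\tau > t) \ge \tfrac{\nu-\lambda}{\nu}e^{-(\nu-\lambda)t}$, both of which are immediate from the explicit formula. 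No genuinely hard analysis is needed; the content is the classical subcritical linear birth-death extinction-time tail plus elementary extreme-value asymptotics, wrapped in the monotone coupling that reduces the near-linear $\mathcal V$ to the exactly linear comparison processes.
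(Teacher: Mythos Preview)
Your approach is essentially identical to the paper's: both sandwich $\mathcal V$ between linear subcritical branching processes with death rates $\alpha(2\pm\varepsilon)k$, invoke the explicit one-ancestor survival function $\mathbb P(\tau>t)=\frac{x}{(1+x)e^{\alpha x t}-1}$ (your formula with $\lambda=\alpha$, $\nu=\alpha(1+x)$), and pass to the extinction time of $z\alpha$ i.i.d.\ copies via $g(t)=1-(1-f(t))^{z\alpha}$, which is exactly your extreme-value computation. One harmless slip: with your labeling ($\mathcal V^-$ having the \emph{smaller} death rate $\alpha(2-\varepsilon)k$), the coupling gives $V^+_t\le V_t\le V^-_t$ and hence $T^+\le T_{z\alpha}\le T^-$, not the other way around---but this only swaps which bound is which and does not affect the conclusion.
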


\begin{proof}
  As a first step, consider a sub-critical branching process
  ${\mathcal W} = (W_t)_{t\geq 0}$ with birth rate $\alpha$ and death
  rate $ \alpha(1+x_\alpha)$, where $x_\alpha\downarrow x$ with
  $x>0$. Let $S^{x_\alpha}_1$ be the extinction time,
  when the process is started in a single {particle}, $W_0=1$. Then,
  from classical theory (see e.g.\ \cite[Chapter~V~(3.4)]{Harris1963})
  it follows, that
  $$
  f(t):=\mathbb P(S^{{x_\alpha}}_1>t\mid W_0=1) =\frac{{{x_\alpha}}}{(1+{{x_\alpha}})
    e^{t\alpha {{x_\alpha}}} - 1}.
  $$ 
  Now, consider the same branching process, but started in
  $W_0 = z_\alpha\alpha$ and denote its extinction time by
  $S^{{x_\alpha}}_{z_\alpha \alpha}$. Then,
  $g(t) := \mathbb P(S^{x_\alpha}_{z_\alpha \alpha}>t)$ satisfies
  \begin{align*}
    g(t) & = 1 - (1-f(t))^{z_\alpha \alpha}.
  \end{align*}
  Hence, for $\varepsilon>0$,
  \begin{equation}
    \label{eq:2198}
    \begin{aligned}
      \mathbb{P} \Bigg( \frac{\alpha} {\log \alpha}
      S^{{x_\alpha}}_{{z_\alpha}\alpha} -
      \frac{1}{{x_\alpha}} > \varepsilon \Bigg) & = g
      \Bigg( \frac{\log \alpha}{ \alpha} \Bigg(
      \frac{1}{{x_\alpha}} + \varepsilon \Bigg) \Bigg)
      { = 1- \left(1-
          \frac{x_\alpha}{(1+x_\alpha)\alpha^{1+x_\alpha\varepsilon}-1}\right)^{z_\alpha
          \alpha}}
      \\ & \xrightarrow{\alpha\to\infty} 0,\\
      \mathbb{P} \Bigg( \frac{\alpha} {\log \alpha}
      S^{{x_\alpha}}_{{z_\alpha}\alpha} -
      \frac{1}{{x_\alpha}} < - \varepsilon \Bigg) &
      = 1- g \Bigg( \frac{ \log \alpha}{ \alpha} \Bigg(
      \frac{1}{{x_\alpha}} - \varepsilon\Bigg)
      \Bigg) = 1- \left(1-
        \frac{x_\alpha}{(1+x_\alpha)\alpha^{1-x_\alpha
            \varepsilon}}\right)^{z_\alpha \alpha} \\ &
      \xrightarrow{\alpha\to\infty} 0.
    \end{aligned}
  \end{equation}
  Stochastically,
  $S_{z_\alpha \alpha}^{1+\varepsilon_\alpha} \leq T_{z_\alpha \alpha}
  \leq S_{z_\alpha \alpha}^{1-\varepsilon_\alpha}$
  and hence,
  \begin{align*}
    \mathbb P\Big( \frac{\alpha}{\log\alpha}T_{{z_\alpha}\alpha} - 1 < -
    2 \varepsilon\Big) & \leq \mathbb P\Big(
                         \frac{\alpha}{\log\alpha}S^{1+\varepsilon_\alpha}_{{z_\alpha} \alpha} -
                         {\frac{1}{1+\varepsilon_\alpha}} < -
                         2\varepsilon +
                         \frac{\varepsilon_\alpha}{1+\varepsilon_\alpha}\Big)
                         \xrightarrow{\alpha\to\infty} 0 
                         \intertext{as well as, by~\eqref{eq:2198}, } 
                         \mathbb     P\Big( \frac{\alpha}{\log\alpha}T_{{z_\alpha}\alpha} - 1 >
                         2\varepsilon\Big) 
    & \leq \mathbb P\Big(
      \frac{\alpha}{\log\alpha}S^{1-\varepsilon_\alpha}_{{z_\alpha}\alpha} -
      {\frac{1}{1-\varepsilon_\alpha}} >
      2\varepsilon -
      \frac{\varepsilon_\alpha}{1-\varepsilon_\alpha}\Big)
      \xrightarrow{\alpha\to\infty} 0,
  \end{align*}
   and we are done.
\end{proof}

\noindent
While Lemma~\ref{l:1} dealt with the initial phase in which allele
$\mathpzc B$ is established in a colony, and Lemmata~\ref{l:4} and~\ref{l:2} {are}
good for the final phase of fixation, the following lemma links up these
two phases.

\begin{lemma}[Fast middle phase of local sweep] Let \label{l:3}
  ${\mathcal V} = (V_t)_{t\geq 0}$ be a birth-death process with birth
  rate ${b_k} \geq \alpha k$ and death rate
  ${d_k} \leq \frac 1\rho \binom k 2 + c\alpha^\gamma k$ for some
  $\gamma\in (0,1)$ and $c\geq 0, \rho>0$. Moreover, let $T_n$ be the
  first time when $V_t=n$. {Then there exists a sequence
    $\varepsilon_\alpha\downarrow 0$ with
    $\varepsilon_\alpha > 1/(\log\alpha)$ such that for all
    $\varepsilon'_\alpha\downarrow 0$ with
    $\varepsilon_\alpha'\geq \varepsilon_\alpha$ and for all
    $\varepsilon>0$
  \begin{align}\label{middlephase} 
    \mathbb P\Big( \frac{\alpha}{\log\alpha} 
    T_{(1-\varepsilon_\alpha')2\alpha\rho} > \varepsilon \Big| V_0 = \varepsilon'_\alpha 
    \alpha\Big) \xrightarrow{\alpha\to\infty} 0.
  \end{align}}
\end{lemma}

\begin{proof}
  {We only need to consider the case ${b_k} = \alpha k$ and
    ${d_k} = \frac 1\rho \binom k 2 + c\alpha^\gamma k$, since
    $T_{(1-\varepsilon_\alpha')2\alpha\rho}$ is maximal in this
    case. It suffices to show that for all $\delta>0$ small enough and
    for all $\varepsilon>0$
    \begin{align}\label{middlephase2}
      \mathbb P\Big( \frac{\alpha}{\log\alpha} 
      T_{(1-\delta)2\alpha\rho} > \varepsilon \Big| V_0 = \delta
      \alpha\Big) \xrightarrow{\alpha\to\infty} 0.
    \end{align}}
  We consider the generator of the process
  $(V_{t/\alpha}/\alpha)_{t\geq 0}$, which is given by
  \begin{align*}
    G_\alpha f(y) & = \alpha y(f(y+\tfrac 1\alpha) - f(y)) + 
    \Big(\frac 1\rho \frac{\alpha y(y-\tfrac 1\alpha)}{2} + c\alpha^\gamma
    y\Big)(f(y-\tfrac 1\alpha) - f(y)) \\ &
    \xrightarrow{\alpha\to\infty} y\Big(1 - \frac{y}{2\rho}\Big)
    f'(y).
  \end{align*}
  Using standard arguments, $(V_{t/\alpha}/\alpha)_{t\geq 0}$
  converges weakly as $\alpha \to \infty$ to the solution of the ODE $y' = y(1-y/(2\rho))$,
  and if $V_0 = \delta\alpha$, the limiting process starts in
  $y(0)=\delta$. This solution converges to~2$\rho$ as $t \to \infty$ (from below since
  $\delta<2\rho(1-\delta)$) {and its hitting time of $2\rho(1-\delta)$
    is finite. Consequently,  $T_{(1-\delta)2\alpha\rho} \in O(1/\alpha)$ with high probability as $\alpha \to \infty$, and
    \eqref{middlephase2} follows.}
\end{proof}

\subsection{Proof of Theorem~\ref{T2}}
We are now in the position to prove our main result, Theorem~\ref{T2}.
The proof will be structured in three main parts, corresponding to the
three cases $\mu \in \Theta(\alpha)$, $\mu \in \Theta(\alpha^\gamma)$,
and $\mu = \frac{1}{\log\alpha}$ in Theorem \ref{T2}. Parts 2 and 3
will each be divided into subparts A and B, where A deals with the
special case $d=2$ and B with the general case $d\ge 2$. We feel that
this is instructive, because most of the ideas and tools prepared in
Sections~\ref{S:ASG} and \ref{S:lemmas} come into play already in the
case $d=2$. We will give the arguments in parts 1, 2.A and 3.A in
detail, whereas we restrict to an outline of the main ideas in parts
2.B and 3.B. Parts 2.A and 2.B will additionally be structured into
the cases (i) $\gamma \in (0,1)$ and (ii) $\gamma = 0$.

The proof of all cases is based on an application of
Proposition~\ref{P:main}.  In view of this result, it suffices to
check that the {\em fixation time} $T$ defined in \eqref{defT}
satisfies the properties claimed for $T_{\rm fix}$ in
Theorem~\ref{T2}.  In the sequel, $T_k$ or $T^V_k$ will always denote
the hitting time of $k$ (or of $\lfloor k \rfloor$ if $k$ is not an
integer) of a birth-death process $V$. 
\\
{\bf Convention.} We will use the term {\em with
  high probability} or {\em whp} as a synonym for {\em with
  probability~1 as $\alpha\to\infty$}.  
  
  {Note that in cases~1
  and~2 of Theorem \ref{T2} the right hand sides are deterministic,
  so that we have to show that for all $\varepsilon>0$
  $$\mathbbm{P}\left( \Big| \frac{\alpha}{\log(\alpha)} T - 2 - S_{\mathcal{I}^{\iota,\gamma}}\Big| > \varepsilon \right) 
  \xrightarrow{\alpha\to\infty} 0.$$
} {As a prelude, we state two results which hold in all cases. Recall
  from Proposition \ref{P:main} that the process
  $(\underline L, \underline M)$ starts
  in $(\underline \Pi + \underline e_1, \underline e_1)$. \\
  (a) { Note that
    $\frac{\underline L_0}{\alpha}\Rightarrow 2\underline\rho$. Hence,
    by} Lemma~\ref{l:numberASGnew}, for some large $c$,}
{there exists a sequence $\varepsilon_\alpha \downarrow 0$
  with}
\begin{align}
  \label{eq:671} 
  \mathbb P\Big(L^i_r \in [2\alpha
  \rho_i(1-{\varepsilon^2_\alpha}), 2\alpha \rho_i(1+{\varepsilon^2_\alpha})] \text{ for
  all $i$, for all }0\leq r\leq cd \tfrac{\log\alpha}{\alpha}\Big)
  \xrightarrow{\alpha\to\infty} 1.
\end{align}
(b) Let $\varepsilon_\alpha>0$ be as in (a). For some $\tau > 0$, and
$\tau_\alpha= \tau\frac{\log \alpha}{\alpha}$, consider the event
\begin{align*}
  E_{\tau,i_0}:=  \{L_{\tau_\alpha}^i \in [ 2\alpha\rho_i(1-\varepsilon^2_\alpha),
  2\alpha\rho_i(1+\varepsilon^2_\alpha)], M^i_{\tau_\alpha} \in [2\alpha\rho_i
  (1-4\varepsilon^2_\alpha), 2\alpha \rho_i(1+\varepsilon^2_\alpha)], i=1,...,d,\\ 
  M_{\tau_\alpha}^{i_0} = 2\alpha\rho_i(1-4\varepsilon^2_\alpha) \mbox{ for some } i_0\}.
\end{align*}
Now, consider $L^1 + \cdots + L^d - M^1 - \cdots - M^d$, which is
a birth-death process with birth rate $b_n=\alpha n$ if
$L^i=\ell_i, M^i=m_i, i=1,...,d$ and
$\ell_1 + \cdots + \ell_d - m_1 - \cdots - m_d=n$ and death rate
\begin{align*}
  d_{\underline \ell, \underline m} :=\sum_{i=1}^d
  \frac{1}{\rho_i}\Big( \binom{\ell_i - m_i}{2} + m_i(\ell_i -
  m_i)\Big) = \sum_{i=1}^d \frac{\ell_i + m_i - 1}{2} \frac{\ell_i
  - m_i}{\rho_i}.
\end{align*} 
(Note that the birth and death rates are independent of $\mu$.)  By
Lemma~\ref{l:numberASGnew}, for all $i=1,...,d$, { and
  since the dynamics of $\underline M$ and of $\underline L$
  coincide,} whp on the event $E_{\tau,i_0}$, $M^i$ stays
in
$[2\alpha \rho_i(1 - 2 \varepsilon_\alpha ), 2\alpha \rho_i(1+2
\varepsilon_\alpha )]$
between the times $\tau_\alpha$ and
$\tau_\alpha+cd \tfrac{\log \alpha}{\alpha}$. Moreover, $L^1,...,L^d$
are bounded as stated in \eqref{eq:671}. Hence, we find the bounds
\begin{align*}
  2\alpha (1-\mathcal{O}(\varepsilon_\alpha)) n \leq d_{\underline \ell, \underline
  m} \leq 2\alpha (1+\mathcal{O}(\varepsilon_\alpha)) n.
\end{align*}
By stopping at time $\tau_\alpha+cd \tfrac{\log \alpha}{\alpha}$
with $cd \ge 1$, we can apply Lemma~\ref{l:2} to conclude that,
whp,
\begin{align}\label{eq:672}
  L^1 +  \cdots + L^d  - M^1 - \cdots & - M^d \text{ hits~0 at time
                                        in} 
  \\ \notag & \Big[\tau_\alpha +\frac{\log\alpha}{\alpha}(1-\mathcal{O}(\varepsilon_\alpha)),
              \tau_\alpha+\frac{\log\alpha}{\alpha}(1+\mathcal{O}(\varepsilon_\alpha))\Big].
\end{align}

\noindent {\bf 1.  Case $\dickm{\mu \in \Theta(\alpha)}$.}
Set $\mu = \alpha$ for simplicity. If
$(M^i)_{i=1,...,d}=\underline k = (k_i)_{i=1,...,d}$ and
$\ell := k_1 + \cdots k_d$, the process $M^1 + \cdots + M^d$ has birth
rate $b_{\ell} = \alpha \ell$ and death rate
$\frac{1}{\rho_1} \binom{k_1}{2} + \cdots \frac{1}{\rho_d}
\binom{k_d}{2} \leq \frac{1}{\min \rho_i} \binom{\ell}{2} =: d_\ell$.
{For any $\varepsilon_\alpha\downarrow 0$ with
  $\varepsilon_\alpha > 1/(\log\alpha)$,} we can choose { $c>0$} {
  such that $d_\ell \leq c \varepsilon_\alpha \alpha \ell$ for
  $\ell\leq {\varepsilon_\alpha}\alpha$.} Then, Lemma~\ref{l:1},
Assertion~1, (used for $M^1 + \cdots + M^d$ in place of~$\mathcal V$,
{and with $p=1$}) shows that $M^1 + \cdots + M^d$ hits
${\varepsilon_\alpha}\alpha$ at some time
{$T_{{\varepsilon_\alpha}\alpha}$} in
{$\tfrac{\log(\alpha)}{\alpha}(1 + o(1))$} whp. Arguing as in the
proof of Lemma~\ref{l:3}, we see that for any $\varepsilon >0$ small
enough any potential limit $\underline x = (x_i)_{i=1,...,d}$ of the
processes
$\tfrac{1}{\alpha}({M}^1_{T_{\varepsilon\alpha}+t/\alpha},\dots,
{M}_{T_{\varepsilon\alpha}+t/\alpha}^d)$
as $\alpha\to\infty$ solves for $t>0$ the system of ODEs
\begin{align*}
  \dot x_i & = x_i - \frac{1}{2\rho_i} x_i^2 + \sum_{j=1}^d
             a(j,i)x_j - a(i,j)x_i,
\end{align*}
starting at $t=0$ in some state with
$x_1 {+ \cdots + x_d}=\varepsilon$. These ODEs have equilibrium
$2\underline\rho$ and a state $\underline{x}$ with
$x_i \in [2\rho_i(1-\varepsilon), 2\rho_i(1+\varepsilon)]$,
$i=1,...,d$ and $x_{i_0} = 2\rho_{i_0}(1-\varepsilon)$ for some $i_0$
is reached after time of order $o(\log(\alpha))$. Now we can -- as in
the proof of Lemma~\ref{l:3} -- pass to a sequence
$\varepsilon_\alpha$, such that the conditions from above are
fulfilled and so that at some time
$t \in {\frac{\log\alpha}{\alpha}(1+o(1))}$ a state
$({M}_t^1,\dots,{M}_t^d)$ with
$M^{i}_{t} \in [2\alpha\rho_i (1-\varepsilon_\alpha), 2\alpha
\rho_i(1+\varepsilon_\alpha)]$,
$i=1,...,d$ and $M_{t}^{i_0} = 2\alpha\rho_i(1-\varepsilon_\alpha)$
for some $i_0$ is reached whp. In summary, fixation in the sense of
\eqref{defT} occurs at time
$t \in {\frac{\log\alpha}{\alpha}(2+o(1))}.$

\noindent {\bf 2.A.(i) Case $\dickm{\mu \in \Theta(\alpha^\gamma)}$
  for $\dickm{\gamma\in (0,1)}$, $\dickm{d=2}$:}

In the first steps we will apply Lemma~\ref{l:1} a couple of times,
with suitable choices of the process $\mathcal V$ and $\mathcal W$ in
order to control the ``initial phase'' of the pair of processes
$(M^1, M^2)$. Note that when $(M^1, M^2)$ is in state $(k,\ell)$, then
the process $M^1$ has birth rate $b_k^1=\alpha k + \mu a(2,1) \ell$
and death rate $d_k^1 = \frac 1{\rho_1} \binom k 2 + \mu a(1,2) k$,
whereas the process $M^2$ has birth rate $\alpha \ell + \mu a(1,2) k$
and death rate
$d_\ell^2 = \frac{1}{\rho_2} \binom \ell 2 + \mu a(2,1) \ell$.
Moreover, $M^1 + M^2$ is a birth-death process with birth rate
$\alpha(k+\ell)$ and death rate
$\frac{1}{\rho_1}\binom k 2 + \frac{1}{\rho_2}\binom \ell 2$.
{Let
  $\varepsilon_\alpha, \varepsilon'_\alpha \downarrow 0$ be sequences
  with $\varepsilon_\alpha, {\varepsilon'_\alpha} > 1/(\log\alpha)$.}

First, we are going to establish that $M^1$ hits
${\varepsilon_\alpha}\alpha^p$ by time
\begin{align*}
  T^{M^1}_{{\varepsilon_\alpha}\alpha^p} \in 
  { \frac{\log\alpha}{\alpha}(p+o(1))}
  \mbox{ whp. }
\end{align*}
On the one hand, this hitting time
$T^{M^1}_{{\varepsilon_\alpha}\alpha^p}$ is stochastically
larger than
$T^{M^1+M^2}_{{\varepsilon_\alpha}\alpha^p}$. For the
latter, Assertion 1 of Lemma~\ref{l:1} (applied to with
$V_t= M_t^1+M^2_t$) ensures that
$T^{M^1+M^2}_{{\varepsilon_\alpha}\alpha^p} \in
{ \frac{\log\alpha}{\alpha}(p+o(1))}$ whp.
On the other hand,
$T^{M^1}_{{\varepsilon_\alpha}\alpha^p}$ is smaller than
the hitting time of $\varepsilon_\alpha \alpha^p$ when only
non-(im)migrated lines in $M^1$ are counted. This process of
non-immigrated lines is a birth-death process $\tilde M^1$ with birth
rate $\alpha k$ and death rate
$\frac{1}{\rho_1}\binom k 2 + \mu a(1,2)k$, and therefore fulfills the
conditions of the process $\mathcal V$ of Lemma~\ref{l:1} (with
$\varepsilon_\alpha$ as above). Consequently, also
$T^{\tilde M^1}_{{\varepsilon_\alpha}\alpha^p} \in
{ \frac{\log\alpha}{\alpha} (p+o(1))}$
whp.  Taking these two comparisons together, we find that
$T^{M^1}_{{\varepsilon_\alpha}\alpha^p} \in {
  \frac{\log\alpha}{\alpha}(p+o(1))}$ whp as well.

Second, we will show that the process $M^2$ hits 1 by time
\begin{align}\label{T_1^M_2}
  T_1^{M^2} \in
  { \frac{\log\alpha}{\alpha}(1-\gamma + o(1)) \mbox{ whp. }}
\end{align}
This hitting time is actually the same if we change the birth rate of
$M^1$ (from $\alpha k + \mu\ell$) to $\alpha k$, since $M^2=0$ before
$T_1^{M^2}$. Hence, up to time $T_1^{M^2}$, the process $(M^1, M^2)$
in place of ($\mathcal V, \mathcal W)$ fulfills the conditions of
Lemma~\ref{l:1}, with the $\mu$ appearing there replaced by
$\mu a(1,2)$. This lemma can now be directly applied to obtain \eqref
{T_1^M_2}.
 
Third, we will argue that { there exists a sequence
  $\varepsilon'_\alpha \downarrow 0$ with
  $\varepsilon'_\alpha > 1/(\log\alpha)$ such that} $M^2$ hits
$\varepsilon'_\alpha \alpha^\gamma$ by time
\begin{align}\label{eq:91204}T^{M^2}_{\varepsilon'_\alpha\alpha^\gamma} \in
  { \frac{\log\alpha}{\alpha}(1+o(1))} \, \mbox{ whp. }
\end{align}
On the one hand, this hitting time is stochastically larger than the
hitting time if migration from {colony} 2 to {colony} 1 is
suppressed. For the thus modified process $(\hat M^1, \hat M^2)$,
$\hat M^1$ has birth rate $\alpha k$ and therefore
$(\hat M^1, \hat M^2)$ fulfills the requirements of Lemma~\ref{l:1}
(for the same combination of $\varepsilon_\alpha, \varepsilon_\alpha'$
as described above) { and
  $T^{\hat M^2}_{\varepsilon_\alpha'\alpha^\gamma} \in
  \frac{\log\alpha}{\alpha}(1+o(1))$ whp}.
On the other hand, this hitting time is stochastically smaller than
the hitting time of ${\varepsilon'_\alpha}\alpha^\gamma$
if only a single migration event from colony 1 to colony 2 happens,
i.e.\ the hitting time
$T^V_{{\varepsilon'_\alpha}\alpha^\gamma}$ of a process
$V$ which starts at time $\frac{\log\alpha}{\alpha}(1-\gamma+o(1))$
with $V=1$, and has birth rate $\alpha \ell$ and death rate
$\frac{1}{\rho_2}\binom \ell 2 + \mu a(2,1)\ell$. By Lemma~\ref{l:1},
Assertion~1, this time is 
$T^V_{{\varepsilon'_\alpha}\alpha^\gamma} \in
\frac{\log\alpha}{\alpha}(1-\gamma+o(1)) +
\frac{\log\alpha}{\alpha}(\gamma+o(1)) =
\frac{\log\alpha}{\alpha}(1+o(1))$
whp and \eqref{eq:91204} follows.  {Moreover,} we have
shown that the pair $(M^1, M^2)$ inherits the properties
\eqref{eq:l21a}, \eqref{eq:l21}, \eqref{eq:l22} proved in
Lemma~\ref{l:1} for the pair $(\mathcal V, \mathcal W)$.

~

In order to go further, we next observe that (as a consequence of the
statement in the first step of this proof, with $p=1$) we have that
${T^{M^1}_{\varepsilon_\alpha\alpha} \in
  \frac{\log\alpha}{\alpha}(1+o(1))}$
{ and}
${T^{M^2}_{\varepsilon_\alpha\alpha^\gamma} \in
  \frac{\log\alpha}{\alpha}(1+o(1))}$
whp.  By Lemma~\ref{l:3} (applied to the process $M^1$) we find a
sequence $\varepsilon_\alpha$ decreasing sufficiently slow such that
it takes time of at most order $o(\log(\alpha)/\alpha)$ until $M^1$
hits $2\alpha\rho_1(1-\varepsilon_\alpha)$. Note, that the sequences
$\varepsilon_\alpha, \varepsilon_\alpha'$ were arbitrary and only had
to fulfill
{$\varepsilon_\alpha, \varepsilon_\alpha' >
  1/(\log\alpha)$},
hence there exist sequences $\varepsilon_\alpha, \varepsilon_\alpha'$
for which all assertions claimed so far are fulfilled. Also in the
following we will if neccessary replace the sequences by slower
converging ones. We note that, due to Corollary~\ref{cor:numberASG},
whp the process $M^1$ will not drop below
$2\alpha\rho_1(1-2\varepsilon_\alpha)$ for the entire period remaining
to fixation with $\varepsilon_\alpha$ again suitably adapted. Now, if
$M^2=\ell$, it has birth rate $\alpha\ell + \mu a(1,2) M^1$, and since
$M^1\leq L^1 \leq 2\alpha \rho_1(1+\varepsilon_\alpha)$, this is
bounded above by $\alpha\ell + c\alpha^{1+\gamma}$ for some constant
$c$. The death rate of $M^2$ is (for the same
$\varepsilon_\alpha, {\varepsilon'_\alpha }$ as above)
$\frac{1}{\rho_2}\binom{\ell}{2} + \mu a(2,1) \ell \leq
c\varepsilon'_\alpha\alpha\ell/2$
for $\ell\leq \varepsilon'_\alpha \alpha$ for some $c>0$. {In
  addition, the sequence $\varepsilon_\alpha$ fulfills the conditions
  on the sequence $c_\alpha$ in Lemma \ref{l:4}.} Hence,
Lemma~\ref{l:4} implies that $M^2$ hits $\varepsilon'_\alpha \alpha$
by time
$T^{M^2}_{\varepsilon'_\alpha \alpha} \in T^{M^2}_{\varepsilon'_\alpha
  \alpha^\gamma} + {{\tfrac{\log(\alpha)}{\alpha}(1-\gamma
    + o(1))}} = {{\tfrac{\log(\alpha)}{\alpha}(2-\gamma +
    o(1))}}$
{when $\varepsilon_\alpha, \varepsilon'_\alpha$ are
  suitably adapted}. Again, $M^2$ rises to
$2\alpha \rho_2(1-2\varepsilon_\alpha)$ by some time of order
{$o(\log(\alpha)/\alpha)$} by Lemma~\ref{l:3} (applied to
the process $M^2$), so by some time in
{$ \tfrac{\log(\alpha)}{\alpha}(2-\gamma + o(1))$}, we
find that $M^1 \geq 2\alpha\rho_1(1-2\varepsilon_\alpha)$ and
$M^2 = 2\alpha\rho_2(1-2\varepsilon_\alpha)$. Now, fixation occurs
after time in
{$T = T^{M^2}_{\varepsilon'_\alpha\alpha} +
  \tfrac{\log(\alpha)}{\alpha}(1+o(1)) =
  \tfrac{\log(\alpha)}{\alpha}(3-\gamma+o(1)) $} by \eqref{eq:672}.

\noindent {\bf 2.A.(ii) Case $\dickm{\mu \in \Theta(1)}$,
  $\dickm{d=2}$:}
Arguing exactly as in Case 2.A.(i), but now with $p=1$, we obtain for
any $\varepsilon_\alpha \downarrow 0$ with
$\varepsilon_\alpha > 1/(\log\alpha)$ that $M^1$ hits
$\varepsilon_\alpha \alpha$ by time
$T^{M^1}_{\varepsilon_\alpha \alpha} \in
\frac{\log\alpha}{\alpha}(1+o(1))$
whp. In addition, by Lemma \ref{l:3}, $M^1$ has increased to
$(1-\varepsilon_\alpha)2\alpha\rho_1$ (maybe after modifying
$\varepsilon_\alpha$) by time
$T^{M^1}_{(1-\varepsilon_\alpha)2\alpha\rho_1} \in
\frac{\log\alpha}{\alpha}(1+o(1))$.

{ For bounding the time $T^{M^2}_{1}$ stochastically from
  below, fix $\varepsilon>0$ and let $\hat M^2$ be as $M^2$ but with
  $\gamma = \varepsilon/2$. Since $T^{\hat M^2}_{1}\leq T^{M^2}_{1}$,
  we find that by Lemma~\ref{l:1}
  \begin{align}\label{T_1}
    \mathbb P\Big(\frac{\alpha}{\log\alpha}T_1^{M^2} - 1 < - \varepsilon\Big) \leq 
    \mathbb P\Big(\frac{\alpha}{\log\alpha}T_1^{\hat M^2} - (1-\varepsilon/2) 
    < - \varepsilon/2\Big) \xrightarrow{\alpha\to\infty}0.
  \end{align}
  For bounding $T^{M^2}_1$ from above, consider migrants only after
  time
  $T^{M^1}_{(1-\varepsilon_\alpha)2\alpha\rho_1} \in
  \frac{\log\alpha}{\alpha}(1+o(1))$.}
Due to Corollary~\ref{cor:numberASG}, whp the process $M^1$ will not
drop below $2\alpha\rho_1(1 - 2\varepsilon_\alpha)$ for the entire
period remaining to fixation. { The expected number of
  migrants between times
  $T^{M^1}_{(1-\varepsilon_\alpha)2\alpha\rho_1}$ and
  $T^{M^1}_{(1-\varepsilon_\alpha)2\alpha\rho_1} +
  \frac{1}{\log\log\alpha}\frac{\log\alpha}{\alpha}$
  is at least
  $ \mu a(1,2) 2\alpha\rho_1(1-2\varepsilon_\alpha)
  \frac{1}{\log\log\alpha}\frac{\log\alpha}{\alpha}
  \xrightarrow{\alpha\to\infty}\infty$
  and hence we have $M^2_t\geq 1$ for some
  $t\in\frac{\log\alpha}{\alpha}(1+o(1))$ whp.} Together with
\eqref{T_1} this says that
$T^{M^2}_1 \in \frac{\log\alpha}{\alpha}(1+o(1))$ whp. We can now
apply Lemma~\ref{l:4} (with $\gamma:=0$ and $p=1$) to infer that the
process ${M}^2$ reaches $\varepsilon_\alpha \alpha$ in
$\frac{\log\alpha}{\alpha}(2+o(1))$ whp.  From Lemma~\ref{l:3}, we
hence find some $t\in 2\frac{\log\alpha}{\alpha}(1+o(1))$ for which
${M}_t^1 \geq 2\alpha \rho_1(1-\varepsilon_\alpha), {M}_t^2 = 2\alpha
\rho_2(1-2\varepsilon_\alpha)$.
Then by \eqref{eq:672}, fixation occurs at time in 
$\frac{\log\alpha}{\alpha}(3+o(1))$.

\begin{table}
  \centering
  {
    \begin{tabular}{cccccc}
      $t$ & $M_{t\log\alpha/\alpha}^\iota$ &   $M_{t\log\alpha/\alpha}^{i}, i\in 
                                             {D}_1$ 
      &  $M_{t\log\alpha/\alpha}^{i}, i\in 
        {D}_2$ &  $M_{t\log\alpha/\alpha}^{i}, i\in
                 {D}_3$
      & $\cdots $\\
      $0$ & $1$ & $0$ & $0$ & $0$ & $\cdots$ \\
      $1-\gamma$ & $\Theta(\alpha^{1-\gamma})$ & $1$ & $0$ & $0$ &$\cdots$ \\
      $2(1-\gamma)$ & $\Theta(\alpha^{1\wedge (2(1-\gamma))})$ & $\Theta(\alpha^{1-\gamma})$ & $1$ & $0$ & $\cdots$ \\
      $3(1-\gamma)$ & $\Theta(\alpha^{1\wedge (3(1-\gamma))})$ & $\Theta(\alpha^{1\wedge(2(1-\gamma))})$ & $\Theta(\alpha^{1-\gamma})$ & $1$ & $\cdots$ \\
    \end{tabular}
    \caption{\label{tab1} The table gives (approximate) times and orders
      of magnitude in the case 2.B.(i) ($\gamma \in (0,1)$, $d\geq 2$); 
      see text for more explanation.}}
\end{table}

\noindent {\bf 2.B.(i) Case $\dickm{\mu \in \Theta(\alpha^\gamma)}$
  for $\dickm{\gamma\in (0,1)}$, $\dickm{d \ge 2}$:}
Set $[d]:=\{1,...,d\}$ and, for $s=0,1,\ldots, \Delta_\iota$, let
$B_s(\iota)$ be the set of vertices in $[d]$ which can be reached from
$\iota$ by at most $s$ steps (cf. Definition \ref{def:auxProc}). We
partition $[d] = \bigcup_{s=0}^{\Delta_\iota} {D}_s$ into
$ {D}_0 := \{\iota\}$ and
$ {D}_{s} := B_{s}(\iota) \setminus B_{s-1}(\iota)$,
$s=1,\ldots, \Delta_\iota.$ Arguing similarly as in part 2.A (i), now
based on Corollary \ref{r:1}, we obtain the analogue of \eqref
{T_1^M_2}, simultaneoulsly for all $i\in D_1$. In the language of the
epidemic {process} $\mathcal I^{\iota, \gamma}$ this means that all
colonies $i \in {D}_1$ are infected at times
$$T_1^{M^{i}} \in \frac{\log(\alpha)}{\alpha}(1+o(1))\, \mbox{ whp; }$$ 
see also Table \ref{tab1} for orientation.

Let us concentrate now on a colony $m \in {D}_2$ and set
$ {D}_1^{(m)} := \{j\in D_1: a(j,m)>0\}$.  From the second assertion
of Corollary \ref{r:1} we obtain that { there exists a
  sequence $\varepsilon_\alpha \downarrow 0$}, such that for all
colonies $i \in {D}_1^{(m)}$,
$$T_{\varepsilon_\alpha \alpha^{1-\gamma}}^{M^{i}}  \in 
\frac{\log(\alpha)}{\alpha}(2(1-\gamma) + o(1))\, \mbox{ whp. }$$
Hence, all $i\in {D}_1^{(m)}$ will infect $m$ by this time whp.
Equation \eqref{eq:l22}, translated to the pairs $(M^{\iota}, M^{i})$
for $i\in {D}_1$ in a similar way as done in part 2.A.(i) for the pair
$(M^{1}, M^{2})$, implies that migration from the founder colony
$\iota$ does not speed up (on the
$\frac{\log(\alpha)}{\alpha}$-timescale) the processes $M^{i}$ till
they reach $\varepsilon_\alpha\alpha^{\gamma}$ for an appropriate
sequence $\varepsilon_\alpha$; in fact, during this period the rate of
growth of $M^{i}$ is that of a branching process with Malthusian
parameter $\alpha$. Lemma \ref{l:4} carries this assertion further:
Since $M^{\iota} \leq 2 \alpha \rho_\iota (1+ 2 \varepsilon_\alpha)$,
migration from colony $\iota$ to colonies in $D_1$ is bounded by
$c \alpha^{1+\gamma}$ for an appropriate constant $c$. In addition,
the sequence $\varepsilon_\alpha$ fulfills the conditions of the
sequence $c_\alpha$ in Lemma \ref{l:4}. Consequently, the process
$M^{i}$ continues to grow like a branching process with rate $\alpha$
by Lemma \ref{l:4} and for $m\in D_2$ the assertions of
Lemma \ref{l:1} are fulfilled with $\sum_{j \in {D}_1^{(m)}} M^{j}$
playing the role of $\mathcal V$ and $M^m$ playing the role of
$\mathcal W$, see also Table \ref{tab1}.  It follows that
$T_1^{M^m} \in \frac{\log(\alpha)}{\alpha}(2(1-\gamma) + o(1))$ whp.

Repeating these arguments one finds that all colonies are, whp,
infected by a time in
$\frac{\log(\alpha)}{\alpha}(\Delta_{\iota}(1-\gamma) + o(1))$, with
$\Delta_{\iota}$ as in Definition \ref{def:auxProc}. Finally, arguing
as in part 2.A.(i), it takes an additional time in
$ \frac{\log(\alpha)}{\alpha}(2+o(1))$ until fixation occurs. This
sums up to a total time in
$\frac{\log(\alpha)}{\alpha}((2 + S_{\mathcal{I}^{\iota, \gamma}}) +
o(1))$
whp, with $S_{\mathcal{I}^{\iota, \gamma}}= (1-\gamma)\Delta_{\iota}$
according to Definition \ref{def:auxProc}.
\\

\noindent {\bf 2.B.(ii) Case $\dickm{\mu \in \Theta(1)}$, $\dickm{d}\ge 2$
:} 
We will use the same notation as in Case 2.B.(i). Let
$\varepsilon_\alpha>0$.  The argument from Case 2.A.(ii) works for all
colonies $i\in {D}_1$ which are distance 1 apart from colony
$\iota$. Hence, whp, at some time in
$\frac{\log\alpha}{\alpha}(2+o(1))$, there is
$M^i \in [2\alpha \rho_i(1-2\varepsilon_\alpha),2\alpha
\rho_i(1+2\varepsilon_\alpha)]$
{for $i=\iota$ and $i\in {D}_1$}. Similarly, each colony $m \in {D}_2$
has $M^m=1$ (and in this sense is \emph{infected}) within an
additional time interval of length $o(\tfrac{\log(\alpha)}{\alpha})$,
and then $M^m$ increases to $2\alpha \rho_m(1 - 2\varepsilon_\alpha)$
after a duration in $\frac{\log\alpha}{\alpha}(1+o(1))$. This
procedure is iterated, and all colonies are infected by a time in
$\frac{\log(\alpha)}{\alpha}(\Delta_{\iota}+1 + o(1))$ whp. Then, from
\eqref{eq:672}, fixation occurs at time in
$\frac{\log(\alpha)}{\alpha}(\Delta_{\iota}+2 + o(1))$ whp, giving the
result.

~

\noindent {\bf 3.A. Case $\dickm{\mu = \frac{1}{\log\alpha}}$,
  $\dickm{d = 2}$:}
The main step in this case is to show that
\begin{align} \label{eq:1X}
  \frac{\alpha}{\log\alpha} T_1^{M^2} \xRightarrow{\alpha\to\infty}
  1+X, \text{ where }X \sim \exp(2\rho_1 a(1,2)).
\end{align}
By the same arguments as in Case 2.A.(ii), { for any}
sequence $\varepsilon_\alpha \downarrow 0$ with
$\varepsilon_\alpha > 1/(\log\alpha)$, we have
$T^{M^1}_{(1-\varepsilon_\alpha\alpha)2\alpha\rho_1} =
\frac{\log\alpha}{\alpha}(1+o(1))$
whp. In addition, $M^2=0$ before $T^{M^1}_{\varepsilon_\alpha\alpha}$
whp, as we can estimate the number of migrants from colony 1 to colony
2 by
$\varepsilon_\alpha\alpha \mu a(1,2)
\frac{\log\alpha}{\alpha}(1+o(1))\xrightarrow{\alpha\to \infty} 0 $.
Here, the expected number of migrants from colony~1 to colony~2 during
$[T^{M^1}_{\varepsilon_\alpha\alpha},T^{M^1}_{(1-\varepsilon_\alpha\alpha)2\alpha\rho_1}]$
is bounded from above by
$2\alpha\rho_1(1-\varepsilon_\alpha) \mu a(1,2) o\big(
\frac{\log\alpha}{\alpha}\big) \xrightarrow{\alpha\to\infty} 0$
since
$T^{M^1}_{(1-\varepsilon_\alpha\alpha)2\alpha\rho_1} -
T^{M^1}_{\varepsilon_\alpha\alpha} = o\big(
\frac{\log\alpha}{\alpha}\big)$
by Lemma~\ref{l:3} with a possibly slower decreasing sequence
$\varepsilon_\alpha$. Hence, we have $M^2=0$ before
$T^{M^1}_{(1-\varepsilon_\alpha\alpha)2\alpha\rho_1}$ whp as well. By
Corollary~\ref{cor:numberASG}, we have that
${M}_t^1 \in [2\alpha \rho_1 (1-2\varepsilon_\alpha), 2\alpha
\rho_1(1+2\varepsilon_\alpha)]$
after $T^{M^1}_{(1-\varepsilon_\alpha\alpha)2\alpha\rho_1}$ until
fixation. Hence, for all $x>0$, {
  \begin{align*}
    \lim_{\alpha\to\infty}\mathbb P\Big( \frac{\alpha}{\log\alpha}
    T^{M^2}_1 - 1 > x \Big) 
    & =
      \lim_{\alpha\to\infty}\mathbb E\Big[\exp\Big( - 
      \int_{T^{M^1}_{(1-\varepsilon_\alpha\alpha)2\alpha\rho_1}}^{\frac{\log\alpha}{\alpha}(1 +
      x)} \mu a(1,2){M}_{t}^1 dt\Big) \Big]\\
    & = \lim_{\alpha\to\infty} \exp\Big( -
      \int_{\frac{\log\alpha}\alpha}^{\frac{\log\alpha}{\alpha}(1+x)}
      \frac{2\alpha\rho_1 a(1,2)}{\log\alpha} dt\Big) \\
    & = e^{-2\rho_1 a(1,2) x},
\end{align*}
which  gives
\eqref{eq:1X}. }
Analogously to the other cases we find $c>0$ and a sequence
$\varepsilon_\alpha \downarrow 0$ with
$\varepsilon_\alpha > 1/(\log\alpha)$, such that if
$(M^1,M^2)=(k,\ell)$, $M^2$ is a birth-death process with birth rate
$b_\ell = \alpha \ell + \mu a(1,2) k \leq \alpha\ell + c
\alpha/\log\alpha$
and death rate
$d_\ell = \frac{1}{\rho_2} \binom \ell 2 + \mu a(2,1) \ell \leq
\varepsilon_\alpha \alpha\ell$
for $\ell\leq \varepsilon_\alpha \alpha$. So, we can apply
Lemma~\ref{l:4} (for $\gamma=0$) in order to see that
$T^{M^2}_{\varepsilon_\alpha \alpha}$ occurs after duration in
$\frac{\log\alpha}{\alpha}(1+o(1))$ Then, using Lemma~\ref{l:3}, we
see that
$T^{M^2}_{2\alpha\rho_2(1-2\varepsilon_\alpha)} \in
\frac{\log\alpha}{\alpha}(2+X+o(1))$
for some $X$ distributed as above.  Then, using~\eqref{eq:672},
fixation occurs at time in $\frac{\log\alpha}{\alpha}(3+X+o(1))$, as
claimed.

~

\noindent {\bf 3.B. Case $\dickm{\mu = \frac{1}{\log\alpha}}$,
  $\dickm{d}\ge 2$ :} By the same arguments as in Case 3.A at a time
$t \in \frac{\log\alpha}{\alpha}(1+o(1))$, colony~$\iota$ in the
process ${\mathcal J^{\iota}}$ from Definition~\ref{def:auxProc}
switches from being infected to being infectious. From here on, each
colony $i \in {D}_1$ can be \emph{infected} by a migrant from
colony~$\iota$ at rate
$2 \rho_1 a({\iota},i) \alpha/(\log\alpha)$,
{i.e. at rate $2 \rho_1 a({\iota},i)$ on the
  $\frac{\log\alpha}{\alpha}$-timescale}.  After $i$ is infected,
$M^i$ increases until there are of the order $\alpha$ particles, which
happens after time of duration $\frac{\log\alpha}{\alpha} (1+o(1))$.
Then, the colony becomes \emph{infectious}, meaning that other
colonies can be infected from that colony. More precisely, if colony
$i$ is infectious and colony $j$ satisfies $a(i,j) >0$, then, as long as
$M^j = 0$, a migrant from $M^i$ arrives in colony $j$ after an
exponential time with rate $2 \rho_i a(i,j)$ { on the
  $\frac{\log\alpha}{\alpha}$-timescale.} Continuing in this way, the
waiting time until all colonies are infectious is
$\frac{\log(\alpha)}{\alpha}(S_{\mathcal J^{\iota}} + o(1))$ in the
approximating process ${\mathcal J^{\iota}}$. At this time, each
colony~$i$ has $M^i\geq 2\alpha \rho_i(1-\varepsilon_\alpha)$,
$i=1,\dots,d$. As in the other cases we conclude from \eqref{eq:672}
that after an additional time of duration in
$\frac{\log\alpha}{\alpha}(1+o(1))$, fixation has occurred.

\subsubsection*{Acknowledgments}
We thank Wolfgang Stephan for posing the question initiating this
work, Tom Kurtz for help and stimulating discussions related to the
proof of Lemma~\ref{l:numberASGnew}, Jeff Jensen for valuable comments
on the manuscript and two anonymous referees for their careful and
critical reading which helped us to improve the presentation and to
correct errors from previous versions of the manuscript. This research
was supported by the DFG through the research unit 1078 and the
priority program 1590, and in particular through grant GR 876/16-1 to
AG, Pf-672/3-1 and Pf-672/6-1 to PP, and Wa-967/4-1 to AW.

\end{document}